\documentclass[11pt,a4paper, reqno]{amsart}
\usepackage{mathrsfs}
\usepackage[utf8]{inputenc}

\usepackage{amsmath,amsfonts,verbatim}
\usepackage{latexsym}
\usepackage{amssymb,leftidx}
\usepackage{extarrows}
\usepackage{overpic}
\usepackage{color}
\usepackage{epsfig}
\usepackage{subfigure}
\usepackage{tikz}
\usepackage{bbm}
\usepackage{tikz, caption}
\usepackage[font=small,labelfont=bf]{caption}
\usepackage{enumerate}
\usepackage[backref=page]{hyperref}
\hypersetup{urlcolor=blue, citecolor=red}
\usepackage{hyperref}

\usepackage{amssymb}
\usepackage{mathrsfs}
\usepackage{amscd}
\usepackage{cite}

\usepackage{graphicx}
\usepackage{float}
\setlength{\textheight}{21cm} \setlength{\textwidth}{14.5cm}
\setlength{\oddsidemargin}{1cm} \setlength{\evensidemargin}{1cm}

\newcommand{\Ss}{{\mathbb{S}}^1_{\sigma}}
\newcommand{\R}{\mathbb{R}}
\newcommand{\Z}{\mathbb{Z}}

\newcommand{\C}{\mathbb{C}}

\newcommand{\D}{\mathcal{D}}
\newcommand{\Pk}{\mathcal{P}_k}
\newcommand{\bra}[1]{\langle #1 \rangle}
\newcommand{\hank}{\mathcal{H}_\nu}

\numberwithin{equation}{section}
\newtheorem{proposition}{Proposition}[section]

\newtheorem{lemma}{Lemma}[section]
\newtheorem{theorem}{Theorem}[section]
\newtheorem{corollary}{Corollary}[section]
\newtheorem{remark}{Remark}[section]

\newenvironment{equ*}{\begin{equation*}}{\end{equation*}}

\begin{document}
\title[Dirac equation in the cosmic string spacetime]{Dispersive and Strichartz estimates for Dirac equation in a cosmic string spacetime}

\author{Piero D'Ancona}
\address{Piero D'Ancona:
Aldo Moro 5, 00185 Rome, Italy
Department of Mathematics "Guido Castelnuovo", Sapienza University of Rome, Piazzale Aldo Moro 5, 00185 Rome, Italy}
\email{dancona@mat.uniroma1.it}

\author{Zhiqing Yin}
\address{Zhiqing Yin:
Department of Mathematics, Beijing Institute of Technology, Beijing 100081; Department of Mathematics "Guido Castelnuovo", Sapienza University of Rome, Piazzale Aldo Moro 5, 00185 Rome, Italy}
\email{zhiqingyin@bit.edu.cn}

\author{Junyong Zhang}
\address{Junyong Zhang:
Department of Mathematics, Beijing Institute of Technology, Beijing 100081}
\email{zhang\_junyong@bit.edu.cn}
\maketitle

\begin{abstract}
  In this work we study the Dirac equation on the cosmic string background, which models a one--dimensional topological defect in the spacetime. We first define the Dirac operator in this setting, classifying all of its selfadjoint extensions, and we give an explicit kernel for the propagator. Secondly, we prove dispersive estimates for the flow, with and without weights. Finally, we prove Strichartz estimates for the flow in a sharp restricted set of indices, which are different from
the classical Euclidean ones.
\end{abstract}

\begin{center}
 \begin{minipage}{120mm}
   { \small {\bf Key Words:  Decay estimates, Strichartz estimates,  Dirac equation,  Cosmic string}
      {}
   }\\
    { \small {\bf AMS Classification:}
      { 42B37, 35Q40, 35Q41.}
      }
 \end{minipage}
 \end{center}

\tableofcontents
\section{Introduction}


A \emph{cosmic string} is a one--dimensional defect line in spacetime that might have formed during the early phases of the universe (see \cite{Paker,VS}). The existence of such objects is predicted by some grand unified theories. In the simplest axisymmetric model, the string lies along the $z$--axis and the defect is described by the metric
\begin{equation*}
  ds^{2}=dt^{2}-dz^{2}-dr^{2}-r^{2}d \theta^{2},
  \qquad
  0\le \theta<2\pi(1-4G \mu)
\end{equation*}
where $G$ is Newton's gravitational constant and $\mu$ is the `unit mass' of the string.
We see that the metric is locally flat but the angular coordinate is restricted to a range smaller than $[0,2\pi]$, giving rise to a flat cone metric in the $(r,\theta)$ plane
 \cite{BFM, CT1, CT2, Ford, YZ, Zhang1}.

In this work we study the Dirac equation on a cosmic string background. We are mainly interested in the effect of the topological defect on the dispersive properties of the Dirac flow. Since the singularity is present only in the transverse dimensions, one can ignore the $z$ variable and consider the reduced 2D metric (see \cite{GA1984})
\begin{equation}\label{metric}
  dl^2=dr^2+r^2d\theta^2,\quad -\pi \sigma\leq \theta<\pi \sigma,
\end{equation}
where we have set $0\leq1-\sigma=4G \mu<1$ and translated the angular variable.



Dispersive equations perturbed by electromagnetic fields or on a curved background have inspired a great deal of research in recent years. We mention in particular the Aharonov--Bohm (AB) field \cite{AB59}, where a charged particle can interact with the field also in those regions with vanishing field. This two dimensional model has a strong similarity with the axisymmetric cosmic string.
Indeed, in both models the singularity is concentrated at the origin, and at least formally the field (for the AB model) or the curvature (for the cosmic string model) is zero away from the origin; however, a physical effect can be measured. In the first case, this is precisely the content of the AB experiment \cite{AB59}, while
e.g.~in \cite{FRV} the effects of a cosmic string spacetime away from the singularity were studied. The dispersive properties of the Dirac equation perturbed by the Aharonov--Bohm field were studied by the authors of the present paper in \cite{CDYZh}.

In physics, the Dirac oscillator has been studied in various contexts including the Aharonov--Bohm field \cite{FB}, chiral phase transitions under a constant magnetic field \cite{BML}, and, more recently, the cosmic string background.
For the Dirac oscillator in a cosmic string spacetime we refer to \cite{Bak2012,Bak2013,BM2018,CFM}, for spinning cosmic strings to \cite{GJ1989,HHM}, and for magnetic cosmic strings in \cite{AS2014, ASP}. These works motivated us to investigate the decay of Dirac flows in a cosmic string spacetime.


\subsection{Dirac operator on the cosmic string}
For a complete derivation of the Dirac equation in curved space-time, we refer to Section 5.6 in \cite{Paker}.
The concrete form we adopt here is presented
also in \cite[Section 2.1]{BCSZh}.

We use the choice of matrices
\begin{equation}\label{m-Pauli}
  \gamma^0=\begin{pmatrix}
        1&0\\
        0&-1
        \end{pmatrix},
        \quad
  \gamma^1=\begin{pmatrix}
         0&-i\\
         -i&0
         \end{pmatrix},
      \quad
  \gamma^2=\begin{pmatrix}
         0&-1\\
         1&0
         \end{pmatrix}
\end{equation}
satisfying the usual anticommutation relations (see \cite{Thaller})
$\{\gamma^{a},\gamma^{b}\}=2 \eta^{ab}$,
$\eta=\mathop{\rm diag}[+1,-1,-1]$.
Recall that the metric of $X$ in global radial coordinates
$(x_{1},x_{2})=(r,\theta)$ is given by
$dl^2=dr^{2}+r^{2}d \theta$;
we denote derivatives w.r.t. these coordinates by
$(\partial_{1},\partial_{2})=(\partial_{r},\partial_{\theta})$.
We introduce the \emph{zweibein} formalism:
$e_{1},e_{2}$ are the orthogonal vector fields
\begin{equation*}
  e_{1}=\partial_{r},\qquad
  e_{2}=\frac 1r \partial_{\theta}
\end{equation*}
and the corresponding dual forms are
\begin{equation*}
  e^{1}=dr,\qquad
  e^{2}=r d \theta.
\end{equation*}
Writing $e^{a}=e^{a}_{j}dx^{j}$, the matrix $e^{a}_{j}$
takes the form
\begin{equation*}
  \begin{pmatrix}
    1 & 0 \\
    0 & r
  \end{pmatrix}.
\end{equation*}
The Dirac operator on $X$ is defined as
$\mathcal{D}= e^j_a \gamma^a D_j$, where
$D_{j}=\partial_{j}+B_{j}$ are
the covariant derivatives for spinors
(see \cite[Section 5.6, page 226 (5.270)]{Paker}); more explicitly,
\begin{equation*}
  B_{j}=\frac 18 \omega^{ab}_{j}[\gamma^{a},\gamma^{b}],
  \qquad
  \omega^{ab}_{j}=e^{a}_{i}\Gamma^{i}_{jk}e^{kb}+
    e^{a}_{i}\partial_{j}e^{ib}.
\end{equation*}
$B_{j}$ contains an algebraic part $[\gamma^{a},\gamma^{b}]$,
corresponding to the generators of the underlying Lie algebra
for Dirac bispinors, and the spin connection $\omega^{ab}_{j}$,
which can be computed via Cartan's first structural equation
\begin{equation*}
  de^{a}+\omega^{a b}\wedge e^{b}=0,
  \qquad
  \omega^{ab}=\omega^{ab}_{j} dx^{j}.
\end{equation*}
Since $de^1=0$, $de^2=\frac1r e^1\wedge e^2=-\frac1r e^2\wedge e^1$,
we get $\omega^{12}=-\frac 1r e^2$ and $\omega^{21}=\frac1r e^2$,
and $\omega^{12}_2=-1$ and $\omega^{21}_2=1$. It follows
that $B_{1}=0$ and
\begin{align*}
B_2&=\frac18\omega^{ab}_2[\gamma^a,\gamma^b]\\
&=\frac18\omega^{12}_2[\gamma^1,\gamma^2]+\frac18\omega^{21}_2[\gamma^2,\gamma^1]\\
&=\frac12\omega^{12}_2\gamma^1 \gamma^2
\qquad
\text{(since $[\gamma^1,\gamma^2]=2\gamma^1 \gamma^2$)}\\
&=-\frac12\gamma^1 \gamma^2.
\end{align*}
We obtain
\begin{equation*}
  e^j_a\gamma^a D_j=\gamma^1D_1+\frac1r\gamma^2 D_2
  =\gamma^1\partial_r+\frac1r(\gamma^2\partial_\theta+\frac12\gamma^1)
\end{equation*}
and in conclusion the Dirac operator takes the form
\begin{equation}\label{eq:diracop}
  \mathcal{D}=
  \gamma^1\left(\partial_r+\frac1{2r}\right)+
    \frac1r \gamma^2\partial_\theta.
\end{equation}

\noindent The time dependent Dirac equation on $X$ is then
\begin{equation}\label{eq:dirac}
  [\gamma^{0}\partial_{t}+\mathcal{D}]u
  \equiv
  \left[\gamma^0\partial_t+\gamma^1\left(\partial_r+\frac1{2r}\right)
  +\frac1r \gamma^2\partial_\theta\right]u=0.
\end{equation}
Multiplying the equation by $i\gamma^{0}$ we obtain the
equivalent Hamiltonian form
\begin{equation*}
  i\partial_t u=\mathcal{D}_\sigma u,
\end{equation*}
where the operator $\mathcal{D}_\sigma$ now is given by
\begin{equation}\label{eq:convention}
  \mathcal{D}_\sigma=
  \gamma^{2}\left(\partial_r+\frac1{2r}\right)
  -\frac1 r \gamma^{1} \partial_\theta.
\end{equation}
We shall denote by $\mathcal{D}_{\mathbb{S}_\sigma^1}$
the Dirac operator on $\mathbb{S}_\sigma^1:=\R/ 2\pi \sigma\Z$ is a circle of radius $0<\sigma\leq 1$:
 \begin{align}
 \mathcal{D}_{\mathbb{S}_\sigma^1}=-\gamma^1\partial_\theta=\begin{pmatrix}
 0&i\partial_\theta\\
 i\partial_\theta&0\end{pmatrix},\quad D_{\mathbb{S}_\sigma^1}=i\partial_\theta.
 \end{align}
Then the operator $\mathcal{D}_{\sigma}$ can be written as
 \begin{align}
 \mathcal{D}_\sigma=\begin{pmatrix}0&-(\partial_r+\frac1{2r})+\frac 1r D_{\mathbb{S}_\sigma^1}\\(\partial_r+\frac1{2r})+\frac 1r D_{\mathbb{S}_\sigma^1}&0
 \end{pmatrix}.
 \end{align}
The operator $\mathcal{D}_{\mathbb{S}_\sigma^1}$ satisfies
 \begin{align}\label{eq:a-eigen}
 \mathcal{D}_{\mathbb{S}_\sigma^1}\varphi_k(\theta)=\lambda_k \varphi_k(\theta),\quad  \varphi_k(\sigma\pi)=\varphi_k(-\sigma\pi),
 \end{align}
where $\lambda_k=\frac k\sigma$ and
 \begin{align}\label{varphi}
 \varphi_k(\theta)=\Big(\begin{smallmatrix}e^{-i\frac k\sigma \theta}\\e^{-i\frac k\sigma \theta}\end{smallmatrix}\Big)
  \end{align} are eigenvalues and eigenfunctions of the
(obviously diagonizable)
operator $\mathcal{D}_{\mathbb{S}_\sigma^1}$.
This leads to the decomposition of $[L^{2}(\R^2)]^2$ as
\begin{equation}\label{L2Domc}
  [L^{2}(\R^2)]^2=
  \bigoplus_{k\in \mathbb{Z}}
  L^{2}(rdr)^2\otimes h_{k}(\mathbb{S}_\sigma^{1}),
\end{equation}
where $h_{k}(\mathbb{S}_\sigma^{1})$ is the one dimensional space
\begin{equation*}
  h_{k}=
  h_{k}(\mathbb{S}_\sigma^{1})=
  \left[
    \begin{pmatrix}
      e^{-i\frac k\sigma \theta} \\
      e^{-i\frac k\sigma\theta}
    \end{pmatrix}
  \right]=
  \left\{
    c\begin{pmatrix}
      e^{-i\frac k\sigma \theta} \\
      e^{-i\frac k\sigma\theta}
    \end{pmatrix}:
    c\in \mathbb{C}
  \right\}.
\end{equation*}
Then any
$f=(\begin{smallmatrix} \phi \\ \psi \end{smallmatrix})
  \in \big[L^{2}(X)\big]^{2}$
can be expanded in the form
\begin{equation*}
  \phi=\sum_{k\in \mathbb{Z}}\phi _{k}(r)e^{-i\frac k\sigma \theta},
  \qquad
  \psi=\sum_{k\in \mathbb{Z}}\psi _{k}(r)e^{-i\frac k\sigma\theta}.
\end{equation*}
The corresponding decomposition of $\mathcal{D}_{\sigma}$
is given by
\begin{align}
\D_{\sigma}=\bigoplus_{k\in\Z} d_k\otimes I_2, \quad
 I_2:=\mathrm{Id}
 \begin{pmatrix}
   1 & 0 \\
 0 & 1
 \end{pmatrix},
\end{align}
where
 \begin{align}\label{eq:dk}
 d_{k}=\begin{pmatrix}0&-(\partial_r+\frac1{2r})+\frac{k}{\sigma r}\\
 (\partial_r+\frac1{2r})+\frac{k}{\sigma r}&0
 \end{pmatrix},
 \end{align}
and we are reduced to study
the dispersive properties of the equations
\begin{equation*}
 i\partial_t u_k-d_k u_k=0,
 \qquad k\in \mathbb{Z}.
\end{equation*}

\subsection{Results}\label{sec:results}

In the sequel we shall first introduce the following notations:
\begin{itemize}
  \item
  We denote derivatives w.r.to $r$ by
  $f'(r)=\frac{d}{dr}f(r)=\partial_{r}f(r)$.
  \item
  $P_{0}$ and $P_{\perp}$ are the orthogonal projections on
  $L^2(\R^2)^2$ defined by
  \begin{equation*}
    P_0: L^2(\R^2)^2\rightarrow
    L^2(rdr)^2\otimes h_0(\mathbb{S}^1_{\sigma}),\quad
    P_{\bot}=I-P_0.
  \end{equation*}
  \item
  We further split $P_{\bot}=P_> + P_<$, where
  $P_>$ and $P_<$ are the orthogonal projections
  with respect to \eqref{L2Domc} relative to $k>0$
  and $k<0$ respectively:
  \begin{align}
    P_>: L^2(\R^2)^2\rightarrow
    \bigoplus_{0<k\in\Z}L^2(rdr)^2
    \otimes h_k(\mathbb{S}^1_{\sigma}),\\
    P_<: L^2(\R^2)^2\rightarrow
    \bigoplus_{0>k\in\Z}L^2(rdr)^2\otimes h_k(\mathbb{S}^1_{\sigma}).
    \end{align}
  \item
  We fix a standard dyadic decomposition of unity
  $\sum_{k\in\Z}\varphi(2^{-j}\lambda)=1$ for a suitable
  $\varphi\in C^\infty_c(\R\setminus\{0\})$
  with $0\le \phi\le1$, supported in $[\frac12,1]$
  and write
  \begin{equation}\label{dy-dec}
    \varphi_j(\lambda):=\varphi(2^{-j}\lambda),\quad j\in\Z,
    \qquad \phi_0(\lambda):=\sum_{j\leq0}\varphi(2^{-j}\lambda)
  \end{equation}
  Moreover we fix $\tilde{\varphi}\in C^\infty_c([\frac12,4])$
  with $\varphi\tilde{\varphi}=\varphi$.
\end{itemize}

In Section \ref{sec:sa} we shall classify all selfadjoint
extensions of the operator $\mathcal{D}_{\sigma}$; these are
indicized by a real parameter $\gamma\in[0,2\pi)$ and we denote
them by $\mathcal{D}_{\sigma,\gamma}$.
In particular, we can define spectral localizations
$\phi(2^{-j}|\mathcal{D}_{\sigma,\gamma}|)$ via the spectral
theorem.
Note that dispersion holds only for distinguished
selfadjoint extensions, corresponding to the special choices
of the parameter $\gamma$ that satisfy
$\sin \gamma\cos \gamma=0$.

Our first main result is a $L^{1}-L^{\infty}$ dispersive
estimate. In order to avoid unnecessary technicalities, we
localize the estimates in frequency, using the spectral
projections related to $\mathcal{D} _{\sigma,\gamma}$.
For the nonsingular part of the flow we obtain estimates with
the same decay rate and regularity loss as for the
euclidean 2D Dirac equation. However, for the singular
component (corresponding to the projection $P_{0}$)
we are only able to prove a weighted dispersive estimate.
To this end we introduce the following matrix weights,
depending on the special choice of $\gamma$:
\begin{equation}\label{weig-Wj1}
W_j(|x|)=\begin{cases}
\left(\begin{matrix} (1+2^j|x|^{-\frac12})^{-1}& 0 \\0 &  1\end{matrix}\quad \right),\quad \sin\gamma=0;\\
\left(\begin{matrix} 1& 0 \\0 &  (1+2^j|x|^{-\frac12})^{-1}\end{matrix}\quad \right),\quad \cos\gamma=0.
\end{cases}
\end{equation}

\begin{theorem}[$L^{q'}-L^q$-time decay estimates]\label{th:disp}
  Let $\sigma\in(0,1]$,
  $\gamma\in[0,2\pi)$ such that $\sin\gamma\cos\gamma=0$
  and let $\D_{\sigma,\gamma}$ be the corresponding
  selfadjoint extension of $\D_{\sigma}$.
  Then for any $f\in[L^1(X)]^2\cap [L^2(X)]^2$ and any $j\in\Z$, there exists a constant $C$ such that
 \begin{equation}\label{est:Pk}
 \begin{aligned}
 \Big\|e^{it\D_{\sigma,\gamma}}&\varphi(2^{-j}|\D_{\sigma,\gamma}|) P_{\bot}f(x)\Big\|_{[L^\infty(X)]^2}\\
 &\leq C2^{2j}(1+2^{j}|t|)^{-\frac12}\|\tilde{\varphi}(2^{-j}| \D_{\sigma,\gamma}|)P_{\bot}f\|_{[L^1(X)]^2},
 \end{aligned}
 \end{equation}
and
 \begin{equation}\label{est:P0}
 \begin{aligned}
 \Big\|W_j(|x|)&e^{it\D_{\sigma,\gamma}}\varphi(2^{-j}|\D_{\sigma,\gamma}|) P_{0}f(x)\Big\|_{[L^\infty(X)]^2}\\
 &\leq C2^{2j}(1+2^{j}|t|)^{-\frac12} \|\tilde{\varphi}(2^{-j}| \D_{\sigma,\gamma}|)P_{0}f\|_{[L^1(X)]^2},
 \end{aligned}
 \end{equation}
Furthermore, if $2\leq q<4$, we have the
(weightless) decay estimates
 \begin{equation}\label{est:P0q}
 \begin{aligned}
 \Big\|e^{it\D_{\sigma,\gamma}}&\varphi(2^{-j}|\D_{\sigma,\gamma}|) P_0f(x)\Big\|_{[L^q(X)]^2}\\
 &\leq C 2^{2j(1-\frac2q)}(1+2^j|t|)^{-\frac12(1-\frac2q)}\| \tilde{\varphi}(2^{-j}|\D_{\sigma,\gamma}|)P_0f\|_{[L^{q'}(X)]^2}.
 \end{aligned}
 \end{equation}
\end{theorem}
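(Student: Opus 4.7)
\emph{Strategy: reduction to radial modes.} The argument starts by exploiting the angular decomposition \eqref{L2Domc} to reduce the proof to pointwise kernel bounds for each radial operator $d_{k,\gamma}$. A direct squaring gives
\[
d_k^{2}=\mathrm{diag}\bigl(L_{\nu_k^+},\,L_{\nu_k^-}\bigr),
\qquad L_\nu:=-\partial_r^{2}-\tfrac1r\partial_r+\tfrac{\nu^{2}}{r^{2}},\qquad\nu_k^{\pm}=\bigl|\tfrac{k}{\sigma}\pm\tfrac12\bigr|,
\]
so that $d_{k,\gamma}$ is diagonalized via Bessel/Hankel-type transforms. For $|k|\ge 1$ one has $\nu_k^{\pm}\ge 1/2$, the radial operator is essentially self-adjoint, and its normalized generalized eigen-spinors $\Phi_{\lambda,k}(r)$ are built from $J_{\nu_k^{\pm}}(\lambda r)$; for $k=0$ both orders collapse to $1/2$, $d_0$ has deficiency indices $(1,1)$ at the origin, and the family of self-adjoint extensions $d_{0,\gamma}$ from Section \ref{sec:sa} is parametrized by a single angle $\gamma$. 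In every case the propagator has the kernel representation
\[
K_k(t,r,r')=\int_{\mathbb{R}} e^{it\lambda}\,\varphi(2^{-j}|\lambda|)\,\Phi_{\lambda,k}(r)\,\Phi_{\lambda,k}(r')^{\dagger}\,d\mu_k(\lambda),
\]
with an explicit spectral measure $d\mu_k(\lambda)$.

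\emph{The nonsingular block \eqref{est:Pk}.} On $P_{\bot}$ the orders $\nu_k^{\pm}$ are bounded away from $0$, the Bessel functions obey $|J_\nu(x)|\lesssim\min(1,x^{-1/2})$ uniformly, and the choice of self-adjoint extension is canonical (the Friedrichs one, independent of $\gamma$). After rescaling $\lambda=2^j\mu$ the spectrally localized kernel reduces to $2^{2j}$ times a one-dimensional oscillatory integral of standard Hankel type; combining the asymptotic expansions of $J_{\nu_k^{\pm}}$ with 1D stationary phase yields the uniform bound $|K_k|\lesssim 2^{2j}(1+2^j|t|)^{-1/2}$. The resummation of the angular series $\sum_{k\ne 0}e^{ik(\theta-\theta')/\sigma}K_k$ preserves this bound by the standard cone Poisson-type identities in the spirit of \cite{BFM, CDYZh}, delivering \eqref{est:Pk}.

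\emph{The singular block \eqref{est:P0}, and the main obstacle.} For $k=0$ the Bessel order $\nu=1/2$ degenerates and the eigenfunctions become elementary, $J_{1/2}(\lambda r)=\sqrt{2/(\pi\lambda r)}\sin(\lambda r)$ (regular at $r=0$) and $J_{-1/2}(\lambda r)=\sqrt{2/(\pi\lambda r)}\cos(\lambda r)$ (producing an $r^{-1/2}$ singularity). The analysis of Section \ref{sec:sa} shows that the two distinguished boundary conditions $\sin\gamma=0$ and $\cos\gamma=0$ are precisely those which decouple the eigen-spinor $\Phi_{\lambda,0}$ into one regular component (built from $J_{1/2}$) and one singular component (built from $J_{-1/2}$), with the roles of upper and lower components swapped between the two cases. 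The matrix weight $W_j$ in \eqref{weig-Wj1} is designed to multiply exactly the singular component on the output side by $(1+2^j|x|^{-1/2})^{-1}$, which satisfies the crucial pointwise inequality
\[
\bigl(1+2^j r^{-1/2}\bigr)^{-1}r^{-1/2}\le 2^{-j}\qquad\text{for every }r>0,
\]
so the singular $r^{-1/2}$ on the output is traded for $2^{-j}$. After this substitution, the trigonometric product-to-sum identities convert $W_j(r)K_0(t,r,r')$ into a finite sum of elementary oscillatory integrals that can be handled by 1D stationary phase and give $|W_jK_0|\lesssim 2^{2j}(1+2^j|t|)^{-1/2}$. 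The hard part of the whole argument is the bookkeeping on the four matrix entries of $W_jK_0$: one must verify that the compensating $r^{1/2}$ gained from the weight always lands on the singular component, and that the residual singularities on the input side are absorbed either by cancellations in the $\lambda$-integral or by the Taylor expansion of $\sin/\cos$ on the region $r'\lesssim 2^{-j}$.

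\emph{Weightless $L^{q'}\to L^q$ estimate \eqref{est:P0q}.} Finally, \eqref{est:P0q} follows by complex interpolation between the trivial bound $\|\varphi(2^{-j}|\D_{\sigma,\gamma}|)\|_{L^2\to L^2}\le 1$ (spectral theorem) and the weighted estimate \eqref{est:P0} just established. Removing the weight transfers the $r^{-1/2}$ factor back onto the kernel; the resulting kernel is $L^q_{\mathrm{loc}}(r\,dr)$ iff $q/2<2$, which is exactly the sharp threshold $q<4$.
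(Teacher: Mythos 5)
\textbf{On \eqref{est:Pk}.} Your strategy here diverges fundamentally from the paper's and, as stated, contains a genuine gap. You propose to estimate each fixed--$k$ kernel $K_k$ by Bessel asymptotics and one--dimensional stationary phase, obtaining a uniform bound $|K_k|\lesssim 2^{2j}(1+2^j|t|)^{-1/2}$, and then to ``resum the angular series'' invoking ``cone Poisson--type identities.'' But a uniform bound on infinitely many summands does not control the sum $\sum_{k\neq 0}e^{-ik(\theta-\omega)/\sigma}K_k$; one needs cancellation, and you supply no mechanism for it. The paper sidesteps this precisely by \emph{not} attacking the half--wave kernel mode by mode: it applies a subordination formula (Proposition~\ref{prop:sub}) to pass from the spectrally localized half--wave propagator to the Schr\"odinger propagator $e^{-it H_\sigma^\pm}$, for which a closed--form cone kernel is available via the Cheeger--Taylor spectral calculus. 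The angular sum is then carried out \emph{inside} the Schr\"odinger kernel, where the modified Bessel function $I_{\nu}$ and its integral representation make the Poisson summation and the geometric--series resummation explicit and uniformly controllable (estimates \eqref{est:I1}--\eqref{est:B}). You would also need the Bernstein and Littlewood--Paley inputs (Propositions~\ref{prop:LP0}--\ref{prop:LP}, which in turn rest on a Gaussian heat--kernel bound) to run the $2^jt\lessgtr 1$ dichotomy; these do not appear in your outline.

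\textbf{On \eqref{est:P0}.} Here your outline is essentially aligned with the paper: after reduction to the $k=0$ diagonal kernel (entries $F_{\frac12,j}$, $E_{\frac12,j}$, built from $J_{\mp 1/2}$), the pointwise inequality $(1+2^jr^{-1/2})^{-1}r^{-1/2}\le 2^{-j}$ is indeed the reason the weight $W_j$ absorbs the $r^{-1/2}$ singularity. One remark: the kernel is a $2\times 2$ diagonal matrix, so there are only two entries to track, not four. The paper's actual estimate is the scaled version of Lemma~\ref{lem:hankelb}, $|I_\nu(t;r,s)|\lesssim\langle t\rangle^{-1/2}(1+r^{-\nu_-})(1+s^{-\nu_-})$, which is sharper than what is visible from ``trigonometric product--to--sum identities plus stationary phase''; your description is in the right spirit but glosses over the input--side singularity.

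\textbf{On \eqref{est:P0q}.} This is a genuine error. Complex interpolation between the unweighted $L^2\to L^2$ bound and the weighted $L^1\to L^\infty$ bound \eqref{est:P0} produces a \emph{weighted} $L^{q'}\to L^q$ estimate, with a factor $W_j^{\theta}$ (some $\theta\in(0,1)$) attached to the operator. Since $W_j^{-\theta}$ is unbounded near $r=0$, the weight cannot simply be removed afterward; ``transferring the $r^{-1/2}$ factor back onto the kernel'' is not a valid operator--norm step. Checking that the $r^{-1/2}$ singularity is in $L^q_{\mathrm{loc}}$ for $q<4$ is the right heuristic, but turning it into an $L^{q'}\to L^q$ bound requires the direct kernel decomposition by regions $r,s\lessgtr 1$ plus H\"older/Schur/interpolation on each region, which is exactly what the paper carries out in Lemma~\ref{lem:Tnuqq'}. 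As it stands, your argument for \eqref{est:P0q} does not prove the claim.
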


Since the regular component $P_{\perp}f$ satisfies the same
decay estimates as in the euclidean setting, by standard
techniques we deduce Strichartz estimates for the full
range of indices.
Following \cite[Section 2]{BFM}, we use Sobolev spaces
defined via functional calculus in terms of the
Dirac square operator $H=\mathcal{D}_{\sigma}^{2}$ on $X$
(see \eqref{eq:square})
\begin{equation*}
  \mathbf{H}^{s}(X)=\{f\in[L^{2}(X)]^{2}:
    H^{s/2}f\in[L^{2}(X)]^{2}\}
\end{equation*}
with norm $\|u\|_{\mathbf{H}^{s}}=\|(1+H^{s/2})u\|_{L^{2}}$,
and its homogeneous version $\dot {\mathbf{H}}^{s}(X)$,
which can be defined e.g.~as
the completion of test functions in the homogeneous norm
$\|H^{s/2}f\|_{L^{2}}$
(see also \cite[Section 2]{BFM}).

\begin{theorem}
  \label{th-stri1}
  Let $\mathcal{D}_{\sigma,\gamma}$ be the
  self--adjoint extension of $\mathcal{D}_\sigma$ selected in
  Theorem \ref{th:disp}.
  Then for any $f\in \dot{\mathbf{H}}^{s}(X)$ and $p>2$,
  the following Strichartz estimates hold
  \begin{equation}\label{stri-D1}
    \|e^{it\mathcal{D}_{\sigma,\gamma}}P_{\bot}f\|
    _{[L^p_t(\R; L^q_x(X))]^2}
    \leq C \| f\|_{\dot{\mathbf{H}}^{s}(X)},
    \qquad s=1-\frac1p-\frac2q
  \end{equation}
  provided $(p,q)$ satisfy
  \begin{equation}\label{pqrange1}
    (p,q)\in (2,\infty]^2
    \quad\text{and}\quad
    \frac2p+\frac1q\leq\frac12,
    \qquad \text{or}\quad
    (p,q)=(\infty, 2).
  \end{equation}
\end{theorem}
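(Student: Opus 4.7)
\emph{Overall approach.} The plan is to reduce the Strichartz estimate for $P_{\bot}$ to a dyadic sum of frequency-localized Strichartz estimates, each obtained from the Keel--Tao abstract machinery applied to the dispersive bound \eqref{est:Pk}. Since on the regular component $P_{\bot}$ we have the same $(1+2^j|t|)^{-1/2}$ decay as in the 2D Euclidean Dirac case, the admissible line is the standard one with decay parameter $\sigma=1/2$, namely $2/p+1/q=1/2$; the broader condition $2/p+1/q\le 1/2$ in \eqref{pqrange1} will be recovered by a Bernstein/Sobolev embedding at the level of frequency localizations.

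\emph{Dyadic Strichartz.} Interpolating \eqref{est:Pk} with the $L^2$-isometry $\|e^{it\D_{\sigma,\gamma}}f\|_{L^2}=\|f\|_{L^2}$ via Riesz--Thorin gives, for every $q\in[2,\infty]$,
\begin{equation*}
\|e^{it\D_{\sigma,\gamma}}\varphi_j(|\D_{\sigma,\gamma}|)P_{\bot}f\|_{L^q}\le C\,2^{2j(1-\tfrac{2}{q})}(1+2^j|t|)^{-\tfrac12(1-\tfrac{2}{q})}\|\tilde\varphi(2^{-j}|\D_{\sigma,\gamma}|)P_{\bot}f\|_{L^{q'}}.
\end{equation*}
Setting $U_j(t):=e^{it\D_{\sigma,\gamma}}\varphi_j(|\D_{\sigma,\gamma}|)P_{\bot}$, the dispersive bound $\|U_j(t)U_j(s)^*\|_{L^1\to L^\infty}\lesssim 2^{2j}(1+2^j|t-s|)^{-1/2}$ together with the trivial $L^2$ bound becomes, after rescaling $t\mapsto 2^j t$, exactly the Keel--Tao hypothesis with decay exponent $\sigma=1/2$. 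This yields, for every admissible pair $(p,q)$ on the line $2/p+1/q=1/2$ with $p>2$, the block estimate
\begin{equation*}
\|U_j(t)f\|_{L^p_tL^q_x}\le C\,2^{js}\|\varphi_j(|\D_{\sigma,\gamma}|)P_{\bot}f\|_{L^2},\qquad s=1-\tfrac{1}{p}-\tfrac{2}{q}.
\end{equation*}
For sub-admissible pairs with $2/p+1/q<1/2$ the same estimate follows by invoking a Bernstein-type inequality of the form $\|\varphi_j(|\D_{\sigma,\gamma}|)g\|_{L^q}\lesssim 2^{j(\tfrac{2}{\tilde q}-\tfrac{2}{q})}\|\varphi_j(|\D_{\sigma,\gamma}|)g\|_{L^{\tilde q}}$ applied with $\tilde q<q$ chosen to lie on the admissible line. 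The trivial case $(p,q)=(\infty,2)$ is just $L^2$ energy conservation.

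\emph{Summation and main obstacle.} With $p,q\ge 2$, Minkowski's inequality combined with a Littlewood--Paley square-function bound for the functional calculus of $H=\D_{\sigma,\gamma}^2$ yields
\begin{equation*}
\|e^{it\D_{\sigma,\gamma}}P_{\bot}f\|_{L^p_tL^q_x}^2\lesssim\sum_{j\in\Z}\|U_j(t)f\|_{L^p_tL^q_x}^2\lesssim\sum_{j\in\Z}2^{2js}\|\varphi_j(|\D_{\sigma,\gamma}|)P_{\bot}f\|_{L^2}^2\simeq \|P_{\bot}f\|_{\dot{\mathbf{H}}^{s}(X)}^2,
\end{equation*}
concluding the proof. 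The technical crux is precisely the Littlewood--Paley theorem and the Bernstein inequality for spectral multipliers of $H$ on the conical background: these do not follow from purely abstract considerations, but should be obtainable via the angular decomposition \eqref{L2Domc} together with the explicit propagator kernel built into the proof of Theorem~\ref{th:disp}, by a Mikhlin--H\"ormander or heat-kernel multiplier theorem adapted to $H$. Once this functional calculus is in place, the remaining steps are entirely standard.
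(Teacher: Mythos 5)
Your proposal is correct and follows essentially the same strategy as the paper: reduce to dyadic blocks $U_j(t)=\varphi(2^{-j}\sqrt{H})e^{it\sqrt{H}}P_\perp$, apply Keel--Tao to the dispersive bound \eqref{est:Pk} (the paper invokes the truncated-decay variant from \cite{Zhang2}, Proposition~\ref{prop:semi}, with $\kappa=3/2,\eta=1/2,h=2^{-j}$, which already yields the full region $2/p+1/q\le 1/2$, so your extra Bernstein step for sub-admissible pairs, while valid, is not needed), and then sum via the Littlewood--Paley square-function estimate \eqref{est:LP}. You correctly identify the key technical inputs --- the Bernstein and Littlewood--Paley inequalities for the functional calculus of $H$ --- which the paper establishes as Propositions~\ref{prop:LP0} and~\ref{prop:LP} from the Gaussian heat-kernel bound \eqref{est:heat}, together with the identity $e^{it\D_{\sigma,\gamma}}P_\perp=e^{it\sqrt{H}}P_\perp$ which lets one work with $H$ rather than $\D_{\sigma,\gamma}$ on the regular component.
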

The hard case of course are Strichartz estimates for the singular component $P_0f$ in view of the singularity at the origin.
Note that the component $k=0$ is actually a radial component,
hence the range \eqref{pqrange0}
of admissible $(p,q)$ for the estimates is
larger, as it is usual for the radial case.

\begin{theorem}
  \label{th-stri3}
  Let $\mathcal{D}_{\sigma,\gamma}$ be the
  self--adjoint extension of $\mathcal{D}_\sigma$ selected in
  Theorem \ref{th:disp}. Assume the pairs $(p,q)$ satisfy
  \begin{equation}\label{pqrange0}
  (p,q)\in (2,\infty]^2 \quad{\rm and}\quad \frac1p+\frac1q<\frac12, \qquad \text{or}\quad (p,q)=(\infty, 2).
  \end{equation}
  Assume in addition that $q<4$.
  Then for any $f\in \dot{\mathbf{H}}^{s}(X)$ we have the following
  Strichartz estimates:
  \begin{equation}\label{stri-D3}
    \|e^{it\mathcal{D}_{\sigma,\gamma}}P_{0}f\|
    _{[L^p_t(\R; L^q_x(X))]^2}
    \leq C \|P_{0} f\|_{\dot{\mathbf{H}}^{s}(X)},
    \qquad
    s=1-\frac 1p-\frac 2q.
  \end{equation}
\end{theorem}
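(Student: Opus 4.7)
The plan is to derive Theorem~\ref{th-stri3} from the diagonal dispersive bound \eqref{est:P0q} via dyadic frequency localization plus a $TT^{\ast}$ argument, with a radial improvement accounting for the fact that $P_0 f$ lives in the single zero angular mode $h_0(\mathbb{S}^1_\sigma)$ and is therefore essentially one-dimensional in the radial variable $r$.

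First I would decompose $P_0 f = \sum_{j\in\Z}\varphi_j(|\mathcal{D}_{\sigma,\gamma}|)P_0 f$ using \eqref{dy-dec}. By the spectral definition of $\dot{\mathbf{H}}^s(X)$ and almost orthogonality of the frequency projectors, the $\dot{\mathbf{H}}^s$-norm is equivalent to the square function $\bigl(\sum_j 2^{2js}\|\varphi_j(|\mathcal{D}_{\sigma,\gamma}|)P_0 f\|_{L^2}^2\bigr)^{1/2}$; since $p,q\ge 2$, Minkowski reduces the claim to the frequency-localized estimate
\[
\|e^{it\mathcal{D}_{\sigma,\gamma}}\varphi_j(|\mathcal{D}_{\sigma,\gamma}|)P_0 f\|_{[L^p_t L^q_x]^2}
\le C\,2^{js}\|\varphi_j(|\mathcal{D}_{\sigma,\gamma}|)P_0 f\|_{L^2},
\qquad s=1-\tfrac1p-\tfrac2q,
\]
uniformly in $j$. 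For this fixed-$j$ inequality I would run $TT^{\ast}$: setting $T_j f = e^{it\mathcal{D}_{\sigma,\gamma}}\varphi_j(|\mathcal{D}_{\sigma,\gamma}|)P_0 f$, the dispersive bound \eqref{est:P0q} yields
\[
\|T_j T_j^{\ast} g(t)\|_{[L^q_x]^2}
\le C\,2^{2j(1-\frac{2}{q})}\int_{\R}\bigl(1+2^j|t-s|\bigr)^{-(\frac12-\frac1q)}\|g(s)\|_{[L^{q'}_x]^2}\,ds,
\]
so the task reduces to bounding this time convolution in $L^p_t$; the $j$-powers then combine, by scale invariance, into $2^{2js}$.

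A naive Young inequality $L^{p/2}_t\ast L^{p'}_t\subset L^p_t$ applied to the kernel $(1+2^j|t|)^{-(1/2-1/q)}$ would only produce the wave-type admissibility $\frac{2}{p}+\frac{1}{q}<\frac12$ of Theorem~\ref{th-stri1}. To reach the wider range \eqref{pqrange0}, I would exploit the fact that $P_0 f$ has trivial angular content: the reduced propagator $e^{-itd_0}$ in \eqref{eq:dk} is effectively a half-line half-wave flow for a Bessel-type operator whose index is fixed by the self-adjoint extension parameter $\gamma$, and on such radial subspaces the Strichartz estimates for the 2D half-wave improve from wave- to Schr\"odinger-type admissibility (in the spirit of Sterbenz and subsequent radial wave results). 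Equivalently, the improvement can be extracted by a Schur/bilinear analysis of the explicit Bessel kernel for $e^{-itd_0}$ produced earlier in the paper. The endpoint $(p,q)=(\infty,2)$ is the trivial unitarity estimate.

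The constraint $q<4$ is inherited directly from the range of validity of \eqref{est:P0q}, and the exponent $s=1-\frac1p-\frac2q$ is forced by matching $j$-powers. Summing the dyadic bounds via Minkowski closes the estimate. The hard part will be the radial improvement: rigorously transferring radial half-wave Strichartz theory to the operator $d_0$ while tracking the boundary condition at $r=0$ encoded by $\gamma$, and verifying that the integrability/decay properties of the Bessel kernel of $e^{-itd_0}$ are sufficient to upgrade the wave-type $TT^{\ast}$ bound to the broader Schr\"odinger-type one. The exclusion of $q=4$ and of the boundary line $\frac{1}{p}+\frac{1}{q}=\frac12$ is consistent with the known sharpness of such radial improvements.
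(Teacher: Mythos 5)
Your scaffolding is correct — dyadic localization, $TT^*$ on each frequency block using \eqref{est:P0q}, a trivial endpoint $(\infty,2)$, and a radial improvement to widen the admissibility range — but there are two places where the proposal stops at the statement of what must be done rather than producing the argument, and one of these is the heart of the theorem.

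First, summing the frequency-localized pieces requires a Littlewood--Paley square function inequality for $\varphi_j(|\mathcal{D}_{\sigma,\gamma}|)P_0$ \emph{in $L^q_x$}, not just $L^2$-orthogonality of the spectral projectors. For the nonsingular projection this follows from Gaussian heat kernel bounds (Proposition \ref{prop:LP}), but that route is closed for $P_0$: the generalized eigenfunction involves $J_{-1/2}$, whose $r^{-1/2}$ singularity at the origin destroys the Gaussian upper bound for $e^{-t\mathcal{D}^2_{\sigma,\gamma}}P_0$. One must instead prove a dedicated $L^p$ multiplier bound for the operator $\varphi_j(|\mathcal{D}_{\sigma,\gamma}|)P_0$, and this only holds for $\tfrac43<p<4$ (Proposition \ref{prop:squarefun0}). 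So the restriction $q<4$ enters already at this stage, not only in \eqref{est:P0q} as you claim, and your invocation of "almost orthogonality" glosses over a genuinely nontrivial ingredient.

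Second, the extension from the Keel--Tao wave range $\tfrac2p+\tfrac1q\leq\tfrac12$ to the claimed $\tfrac1p+\tfrac1q<\tfrac12$ is only gestured at via an appeal to abstract radial half-wave improvements. That citation does not close the argument: one must check that such results apply to a half-line operator with a $\gamma$-dependent boundary condition at $r=0$ and Bessel order $-1/2$, which is exactly the singular case that the exclusion $q\geq4$ reflects. The paper's actual mechanism is different and very concrete. After frequency localization it uses $(r\rho)^{1/2}J_{-1/2}(r\rho)=c\cos(r\rho)$ to rewrite the $k=0$ flow as a one-dimensional traveling wave $Z(t,r)=r^{-1/2}\big(\hat{\tilde h}(t+r)+\hat{\tilde h}(t-r)\big)$ with $\tilde h$ supported on a compact frequency shell, then splits into $r\lesssim1$ and $r\gtrsim1$, applies Hausdorff--Young in $t$ together with the trivial $L^\infty_tL^2_x$ bound, and obtains the estimate at a corner $(p_0,q_0)=(4+2\epsilon,4-\epsilon)$; interpolating this point with the Keel--Tao wedge yields the full region $\tfrac1p+\tfrac1q<\tfrac12$, $q<4$. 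This explicit reduction (and its intrinsic failure at $q=4$, addressed separately via Lorentz spaces in Remark \ref{rem:q=4}) is what your proposal is missing. Without it, or an equally rigorous substitute, the statement remains unproven beyond the narrower range $\tfrac2p+\tfrac1q\leq\tfrac12$.
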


\begin{remark}\label{rem:q=4}   The restriction $q<4$ is necessary, see Remark \ref{rem:q<4} below. However, one can recover $q=4$ by proving
  \begin{equation}\label{stri-D3'}
    \|e^{it\mathcal{D}_{\sigma,\gamma}}P_{0}f\|
    _{[L^4_t(\R; L^{4,\infty}_x(X))]^2}
    \leq C \|P_{0} f\|_{\dot{\mathbf{H}}^{\frac14}(X)},
  \end{equation}
where we replace the Lebesgue space $L^{4}$ by the Lorentz space $L^{4,\infty}$. By interpolation, one can prove a weak estimate in Lorentz norms along the board line
$1/p+1/q=1/2$. See \eqref {eq:Z} below.
\end{remark}

Combining the Theorems \ref{th-stri1} and \ref{th-stri3}, we obtain the full Strichartz estimates:

\begin{corollary}[Strichartz estimates]
  Under the assumptions of Theorem \ref{th-stri1}, with the
  additional restriction $q<4$,
  the following Strichartz estimates hold:
  \begin{equation}\label{stri}
  \|e^{it\mathcal{D}_{\sigma,\gamma}}f\|_{[L^p_t(\R; L^q_x(X))]^2}\leq
  C
  \| f\|_{\dot{\mathbf{H}}^{s}(X)}.
  \end{equation}
  The restriction $q<4$ is necessary
  in the sense that \eqref{stri} may fail if  $q\geq 4$.
\end{corollary}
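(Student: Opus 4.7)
The plan is to split any $f\in \dot{\mathbf{H}}^{s}(X)$ as $f=P_{\bot}f+P_0 f$ and invoke the two Strichartz estimates already established. Since $P_0$ and $P_{\bot}$ are the orthogonal projections associated with the angular decomposition \eqref{L2Domc}, which is preserved by $\mathcal{D}_\sigma$ and by its selfadjoint extensions $\mathcal{D}_{\sigma,\gamma}$, they commute with the unitary flow $e^{it\mathcal{D}_{\sigma,\gamma}}$ and with the functional calculus defining $\dot{\mathbf{H}}^{s}$. The triangle inequality then reduces \eqref{stri} to two separate bounds: one on $\|e^{it\mathcal{D}_{\sigma,\gamma}}P_{\bot}f\|_{[L^p_t L^q_x]^2}$ and one on $\|e^{it\mathcal{D}_{\sigma,\gamma}}P_0 f\|_{[L^p_t L^q_x]^2}$, each in terms of a suitable homogeneous Sobolev norm.

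The first bound is supplied directly by Theorem \ref{th-stri1}. For the second, the only point to verify is that every pair $(p,q)$ satisfying \eqref{pqrange1} together with $q<4$ also satisfies \eqref{pqrange0}. The endpoint $(\infty,2)$ appears in both lists; otherwise $p>2$ gives $\frac{1}{p}<\frac{2}{p}$, so $\frac{1}{p}+\frac{1}{q}<\frac{2}{p}+\frac{1}{q}\leq \frac{1}{2}$, with the case $p=\infty$ settled by $q>2$. Theorem \ref{th-stri3} then bounds the second term by $C\|P_0 f\|_{\dot{\mathbf{H}}^{s}}$. Summing the two contributions and invoking the Pythagorean identity
\begin{equation*}
\|P_{\bot}f\|_{\dot{\mathbf{H}}^{s}}^{2}+\|P_0 f\|_{\dot{\mathbf{H}}^{s}}^{2}=\|f\|_{\dot{\mathbf{H}}^{s}}^{2},
\end{equation*}
which follows from the fact that $P_0$ and $P_{\bot}$ commute with $|\mathcal{D}_{\sigma,\gamma}|^{s}$, yields \eqref{stri}.

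The main obstacle is the sharpness claim. Since the regular component $P_{\bot}f$ satisfies the full Euclidean Strichartz range, any failure of \eqref{stri} at $q\geq 4$ must originate from the $k=0$ sector. The plan is a Knapp-type counterexample concentrated in that sector: for $j$ large, take $f_j=\tilde{\varphi}(2^{-j}|\mathcal{D}_{\sigma,\gamma}|)P_0 g$ for a suitably localized spinor $g$, and use the explicit representation of the zero-mode propagator from the proof of Theorem \ref{th:disp} (reflected in the weight $W_j$ of \eqref{weig-Wj1}, which is essentially $(1+2^j|x|^{-1/2})^{-1}$) to exhibit a persistent $|x|^{-1/2}$ radial profile for $e^{it\mathcal{D}_{\sigma,\gamma}}f_j$ near the origin. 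This profile is locally $L^{q}$ only for $q<4$, so the left-hand side of \eqref{stri} diverges for $q\geq 4$ while $\|f_j\|_{\dot{\mathbf{H}}^{s}}$ remains of order one. This near-origin singularity of the $k=0$ spinor is precisely what forces the Lorentz substitute $L^{4,\infty}$ of Remark \ref{rem:q=4}, and the detailed construction is carried out in Remark \ref{rem:q<4}.
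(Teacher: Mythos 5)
Your proof is correct and follows essentially the same route the paper takes: split $f=P_{\bot}f+P_0 f$, verify that the admissible range of Theorem \ref{th-stri1} restricted to $q<4$ is contained in that of Theorem \ref{th-stri3} (your check $\frac1p+\frac1q<\frac2p+\frac1q\le\frac12$ for $2<p<\infty$, and $\frac1q<\frac12$ for $p=\infty$, is exactly what is needed), apply the two theorems to the two pieces, and appeal to Remark \ref{rem:q<4} for sharpness. One minor remark on the sharpness sketch: no sequence $f_j$ with $j\to\infty$ is required — by the scale invariance of the homogeneous estimate, a single fixed frequency-localized datum $f=(\mathcal{H}_{-1/2}\chi,0)^T$ with $\chi\in C^\infty_0([1,2])$, as in Remark \ref{rem:q<4}, already has infinite $L^p_tL^q_x$ norm when $q\ge4$ because of the persistent $r^{-1/2}$ profile of $J_{-1/2}$ near the origin.
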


\begin{remark}\label{rem:q<4}
The restriction $q<4$ is sharp.
Actually, it is possible to construct an explicit counterexample,
corresponding to the choice of
data $f=(\mathcal{H}_{-1/2}\chi,0)^T$,
where $\chi\in C^\infty_0([1,2])$ takes values in $[0,1]$
and $\mathcal{H}_{-1/2}$ is the Hankel transform
(see Section \ref{sec:hank} for definitions).
Then, similarly as in \cite[Proposition 6.6]{CDYZh} with $\alpha=-1/2$, we can prove
 \begin{align*}
 \Big\|\int^\infty_0e^{it\rho}J_{-\frac12}(r\rho)\chi(\rho)\rho d\rho\Big\|_{L^p(\R;L^q(X))}=\infty,\quad q\geq4,
 \end{align*}
due to the singularity of the Bessel function $J_{-1/2}(r)$ of order $-1/2$  at $r=0$.
\end{remark}

If we include an appropriate weight in the estimates,
we can recover the full range of admissible pairs
also the singular component, like in Theorem \ref{th:disp}.
Thus we introduce the weights
\begin{equation}\label{weig-Wj}
  W(|x|)=\begin{cases}
    \begin{pmatrix}
      (1+|x|^{-\frac12-\epsilon})^{-1}& 0 \\
      0 &  1
    \end{pmatrix}
    & \sin\gamma=0,\\
    \begin{pmatrix}
      1& 0 \\
      0 &  (1+|x|^{-\frac12-\epsilon})^{-1}
    \end{pmatrix}
    & \cos\gamma=0,
\end{cases}
\end{equation}
where $0<\epsilon\ll 1$ is a fixed small parameter.
We obtain the following result:

\begin{theorem}[Weighted Strichartz estimates for the singular component]\label{th-stri2}
  Let $\mathcal{D}_{\sigma,\gamma}$ be the
  self--adjoint extension of $\mathcal{D}_\sigma$ selected in
  Theorem \ref{th:disp}. Assume $(p,q)$ satisfy \eqref{pqrange0}
  and $4\le q<\infty$. Take the weight $W(|x|)$ defined by \eqref{weig-Wj}.
  Then for any $f\in [{\mathbf{H}}^{s}(X)]^2$
  and any $\theta>1-\frac{4}{q}$ the following weighted
  Strichartz estimates hold:
  \begin{equation}\label{stri-D2b}
    \|W(|x|)^{\theta}e^{it\mathcal{D}_{\sigma,\gamma}}P_{0}f\|
    _{[L^p_t(\R; L^q_x(X))]^2}
    \leq C_{\epsilon,\theta} \|P_{0} f\|_{{\mathbf{H}}^{s}(X)},
    \qquad s=1-\frac1p-\frac2q.
  \end{equation}
\end{theorem}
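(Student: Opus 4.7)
The plan is to establish the dyadic weighted Strichartz estimate at each spectral level $2^{j}$, then sum via Littlewood--Paley. Specifically, I would first decompose $P_{0}f=\sum_{j\in\Z}\varphi_{j}(|\mathcal{D}_{\sigma,\gamma}|)P_{0}f$ and reduce the problem to proving
\begin{equation*}
\bigl\|W^{\theta}e^{it\mathcal{D}_{\sigma,\gamma}}\varphi_{j}(|\mathcal{D}_{\sigma,\gamma}|)P_{0}f\bigr\|_{[L^{p}_{t}L^{q}_{x}]^{2}}
\lesssim 2^{j(1-\frac{1}{p}-\frac{2}{q})}\bigl\|\varphi_{j}(|\mathcal{D}_{\sigma,\gamma}|)P_{0}f\bigr\|_{[L^{2}]^{2}},
\end{equation*}
followed by $\ell^{2}$-summation in~$j$ using the square function machinery associated to the spectral multipliers of $\mathcal{D}_{\sigma,\gamma}$.

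To produce the dyadic estimate, I would apply Stein complex interpolation between the energy identity $\|e^{it\mathcal{D}_{\sigma,\gamma}}\varphi_{j}P_{0}f\|_{L^{2}}=\|\varphi_{j}P_{0}f\|_{L^{2}}$ and the weighted $L^{1}\to L^{\infty}$ bound \eqref{est:P0} to obtain, for $q\in[2,\infty]$,
\begin{equation*}
\bigl\|W_{j}^{1-\frac{2}{q}}e^{it\mathcal{D}_{\sigma,\gamma}}\varphi_{j}P_{0}f\bigr\|_{L^{q}_{x}}
\lesssim 2^{2j(1-\frac{2}{q})}(1+2^{j}|t|)^{-\frac{1}{2}(1-\frac{2}{q})}\bigl\|\tilde{\varphi}_{j}P_{0}f\bigr\|_{L^{q'}_{x}}.
\end{equation*}
Then I would run the $TT^{*}$ method with $Tf=W_{j}^{1-2/q}e^{it\mathcal{D}_{\sigma,\gamma}}\varphi_{j}P_{0}f$. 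The $TT^{*}$ kernel has the form $W_{j}^{1-2/q}(x)K_{j}(t-s;x,y)W_{j}^{1-2/q}(y)$, whose $L^{q'}\to L^{q}$ operator norm is controlled by the weighted dispersive above, since $W_{j}^{1-2/q}(y)\le 1$ lets one absorb the second weight trivially on the input side. Minkowski in~$s$ followed by Hardy--Littlewood--Sobolev in~$t$ then yields the dyadic Strichartz on the admissibility line $\frac{2}{p}+\frac{1}{q}=\frac{1}{2}$ with $4\le q<\infty$; pairs $(p,q)$ in \eqref{pqrange0} below this line are recovered by interpolation of this estimate with the energy estimate (or with Theorem \ref{th-stri3}), exploiting the radial structure of the $P_{0}$ component.

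The last step is to replace the $j$-dependent weight $W_{j}^{1-2/q}$ by the universal weight $W^{\theta}$. Near the origin,
\begin{equation*}
W_{j}^{1-\frac{2}{q}}(r)\simeq 2^{-j(1-\frac{2}{q})}r^{\frac{1}{2}(1-\frac{2}{q})},\qquad
W^{\theta}(r)\simeq r^{(\frac{1}{2}+\epsilon)\theta},
\end{equation*}
so the pointwise bound $W_{j}^{1-2/q}\lesssim W^{\theta}$ holds as soon as $\theta\le(1-\tfrac{2}{q})/(1+2\epsilon)$. By monotonicity of $W^{\theta}$ in~$\theta$, it suffices to prove the estimate for a single $\theta_{0}$ slightly above $1-\tfrac{4}{q}$; for $\epsilon$ fixed small, the hypothesis $\theta>1-\tfrac{4}{q}$ leaves exactly the room between $1-\tfrac{4}{q}$ and $(1-\tfrac{2}{q})/(1+2\epsilon)$ needed to select such a~$\theta_{0}$. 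Summing dyadically via Littlewood--Paley concludes the argument.

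The principal obstacle is the $TT^{*}$ step: \eqref{est:P0} is a one-sided weighted bound (the weight appears only on the output), so a symmetric Keel--Tao application is not directly available. We bypass this by working at the $L^{q'}\to L^{q}$ level, where the asymmetry disappears thanks to $W_{j}^{1-2/q}\le 1$ on the input. A secondary subtlety is the sharp parameter balance between the hypothesis $\theta>1-4/q$ and the weight comparison threshold: the constant $C_{\epsilon,\theta}$ necessarily blows up as $\theta\downarrow 1-4/q$ or as $q$ grows large relative to $1/\epsilon$, reflecting precisely the dependence made explicit in the statement.
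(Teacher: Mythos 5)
Your proposal takes a route that is genuinely different from the paper's, and unfortunately it has a gap at its last step that does not appear repairable as written.

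The decisive problem is the weight-comparison step. You establish the pointwise bound $W_j^{1-2/q}\lesssim W^\theta$ near the origin when $\theta\le(1-\tfrac2q)/(1+2\epsilon)$. But to pass from the dyadic estimate $\|W_j^{1-2/q}\,e^{it\mathcal D_{\sigma,\gamma}}\varphi_j P_0 f\|_{L^p_tL^q_x}\lesssim 2^{js}\|\varphi_j P_0f\|_{L^2}$ to the claimed dyadic estimate with the $j$-independent weight $W^\theta$, one needs the reverse inequality $W^\theta\lesssim W_j^{1-2/q}$, and it must hold uniformly in $j$. Neither is true: for $j>0$ and $r<1$ one has $W_j^{1-2/q}(r)\simeq 2^{-j(1-2/q)}r^{\frac12(1-2/q)}$, which tends to $0$ as $j\to+\infty$ for every fixed $r$, whereas $W^\theta(r)\simeq r^{(\frac12+\epsilon)\theta}$ is a fixed positive quantity. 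Hence $W^\theta\not\lesssim W_j^{1-2/q}$ for large $j$, and the argument cannot produce a universal weight from the frequency-localized one by a pointwise comparison. This is precisely the obstruction that makes the theorem nontrivial.

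There is a second, independent gap. The $TT^*$ mechanism with decay rate $\eta=\tfrac12$ produces Strichartz estimates only in the range $\tfrac2p+\tfrac1q\le\tfrac12$, whereas Theorem~\ref{th-stri2} is claimed on the larger region $\tfrac1p+\tfrac1q<\tfrac12$ (the ``radial'' triangle $AOB$ of Figure 1). You acknowledge this but propose to fill the gap by ``interpolation with the energy estimate or with Theorem \ref{th-stri3}.'' Interpolation with the energy estimate at $(p,q)=(\infty,2)$ cannot help, since that point already lies on the line $\tfrac2p+\tfrac1q=\tfrac12$; interpolation with Theorem \ref{th-stri3} (which only holds for $q<4$) requires a careful tracking of the weight exponent that is not carried out, and at the level of the $j$-dependent weight $W_j^{1-2/q}$ the bookkeeping seems not to close for the reason above. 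A related technical point: your plan relies on $\ell^2$-summation over $j$ via a Littlewood--Paley square function for $P_0$ at $L^q$ with $q\ge 4$, but the square function inequality for the radial component in this paper (Proposition \ref{prop:squarefun0}) is proved only for $\tfrac43<p<4$.

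For comparison, the paper handles the extra range and the non-dyadic weight in one stroke by a different strategy: it proves directly a weighted endpoint estimate in $L^p_tL^\infty_x$ (Lemma \ref{lemwe}), working in Hankel variables and summing over both spatial and frequency dyadic scales with a Schur-test bound that exploits the strict surplus $\epsilon$ in the weight exponent $\tfrac12+\epsilon$; interpolation with the trivial $L^\infty_t L^2_x$ bound then yields the $\theta=1$ case \eqref{stri-D2} on the whole triangle $AOB$. Finally, the general $\theta>1-\tfrac4q$ is obtained by interpolating the $\theta=1$ estimate against the unweighted Theorem \ref{th-stri3} (valid for $q<4$). Because the $L^\infty_x$ endpoint is proved directly, no square function is needed at $q\ge4$ and no uniform-in-$j$ comparison of weights arises.
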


\begin{remark}The conditions on $(p,q)$ are represented in the figure.
Estimates \eqref{stri-D1} hold for $(p,q)$ in the triangle $AOC$, the weighted Strichartz estimates \eqref{stri-D2b} hold for  $(p,q)$ in the triangle $AOB$.
On the other hand,
estimates \eqref{stri-D3} hold for $(p,q)$ in the triangle $ADF$
and the classical estimates \eqref{stri} for $(p,q)$ in the triangle $ADE$. Compared with the classical Euclidean ones, there is a red line $1/q=1/4$. Below this line, there is no weightless Strichartz estimates.
 
 \begin{center}
  \begin{tikzpicture}[scale=1]
 \draw[->] (4,0) -- (8,0) node[anchor=north] {$\frac{1}{p}$};
 \draw[->] (4,0) -- (4,4)  node[anchor=east] {$\frac{1}{q}$};

 \draw  (4.1, -0.1) node[anchor=east] {$O$};
 \draw  (7, 0) node[anchor=north] {$\frac12$};
 \draw  (4, 3) node[anchor=east] {$\frac12$};
 \draw  (5.5, 0) node[anchor=north] {$\frac14$};

 \draw[thick] (4,3) -- (7,0);  
 \draw[red,thick] (4,3) -- (5.5,0);
 \draw[red, dashed,thick] (4,3) -- (7,0); 

 \draw (3.9,3.15) node[anchor=west] {$A$};
 \draw (6.9,0.2) node[anchor=west] {$B$};
 \draw (5.5,-0.2) node[anchor=west] {$C$};
 \draw (6,2.6) node[anchor=west] {$\frac{1}{p}+\frac{1}{q}=\frac12$};

 \draw (6,1.6) node[anchor=west] {$\frac{2}{p}+\frac{1}{q}=\frac12$};

 \draw (7,0) circle (0.06);

 \filldraw[fill=gray!30](4,3)--(5.51,0)--(7,0); 
 \filldraw[fill=gray!50](4,3)--(5.49,0)--(4,0); 


 \draw[<-] (5,2.1) -- (6,2.6) node[anchor=south]{$~$};
 \draw[<-] (4.86,1.3) -- (6,1.6) node[anchor=south]{$~$};

 \draw[red, dashed, thick] (4,1.5) -- (5.5,1.5); \draw (4,1.5) circle (0.06); \draw (4.75,1.5) circle (0.06); \draw (5.5,1.5) circle (0.06);
 \draw  (3.7,1.85 ) node[anchor=north] {$\frac1{4}$};
 \draw (3.8, 1.2) node[anchor=west] {D};
 \draw (4.75, 1.25) node[anchor=west] {E};
 \draw (5.45, 1.2) node[anchor=west] {F};

 \path (6,-1.5) node(caption){Figure 1: Diagrammatic picture of the admissible range of $(p,q)$.};  
 \end{tikzpicture}
 \end{center}

\end{remark}

\subsection{The relation to the previous results}

The techniques of the present paper are closely related to
those introduced in \cite{CDYZh} for the 2D Aharonov--Bohm--Dirac
equation, however the setting here is quite different;
indeed, the geometry of the cosmic string spacetime
is more similar to geometry of flat cones. More new ingredients are needed in the proof, for example, in estimating the heat and Schr\"odinger kernels and obtaining Strichartz estimates. 
We recall that the wave diffraction and the dispersion of wave
and Schr\"odinger equations on flat cones were analyzed in
\cite{BFM, CT1, CT2, Ford, Zhang1}.
For the Dirac equation the situation is even harder, due to the rich algebraic structure of the equation. For the scalar Schr\"odinger operator on flat cones, a natural distinguished selfadjoint extension is available, namely, the Friedrichs extension; this choice is not available for Dirac, since the spectrum is unbounded from below.
Note that the choice of a good selfadjoint extenson is crucial in order to study dispersive properties of the equation, as it is evident from the results presented above.
An additional difficulty arises from the spin structure and the covariant derivative, which introduce variable coefficients on non flat manifolds and make the analysis harder.
To the best of our knowledge, the only available results in this direction are provided in  \cite{BCSZh,CD, CD2, CDM2,CDM}, where some local smoothing and Strichartz estimates with angular regularity loss are proved for Dirac equations on asymptotically flat manifolds, and for suitable spherically symmetric manifolds. Recently, in \cite{MZZ}, the authors studied Strichartz estimates for the Dirac equation on hyperbolic spaces. The Strichartz estimates proved in this paper present no loss of regularity, but the range of admissible pairs is influenced by the choice of the selfadjoint extension.

A brief outline of the paper. In Section \ref{sec:sa} we
classify all selfadjoint extensions of the Dirac operator on a cosmic string. In Section \ref{sec:hantra} we construct the necessary tools to represent the Dirac flow in this setting, and the actual construction of the propagator is carried out in Section \ref{sec:dirapro}, where an explicit representation of the kernel is obtained. The last two
Sections \ref{sec:dispest} and \ref{sec:stri} are dedicated to the proofs of the main results i.e.~dispersive and Strichartz estimates.

\section{Self-adjoint extensions }\label{sec:sa}
In this section we prove that the symmetric radial Dirac operators
$d_{k}$ defined in \eqref{eq:dk} are essentially self-adjoint for $|k|\geq1$, and allow a one--parameter family of selfadjoint extensions for $|k|<1$, which can be constructed as \emph{von Neumann extensions}.
Then we can establish the s.a.~extensions of the Dirac operator $\mathcal{D}_\sigma$.

Recalling the decomposition \eqref{L2Domc} and \eqref{varphi},
for any
$f=(\begin{smallmatrix} \phi \\ \psi \end{smallmatrix})
  \in \big[L^{2}(X)\big]^{2}$, we can
write
\begin{equation*}
\begin{split}
  \mathcal{D}_\sigma f&= \mathcal{D}_\sigma
  \sum_{k\in\Z}
  \begin{pmatrix}
    e^{-i \frac k\sigma\theta}\ \phi_{k}(r) \\
    e^{-i\frac k\sigma\theta}\ \psi_{k}(r)
  \end{pmatrix}
  =
  \sum_{k\in\Z}
  \begin{pmatrix}
    e^{-i\frac k\sigma \theta}(-\partial_r-\frac1{r}(\frac12-\frac k{\sigma }))\psi_{k}(r) \\
    e^{-i\frac k\sigma\theta}(\partial_r+\frac1r(\frac12+\frac k{\sigma }))\phi_{k}(r)
  \end{pmatrix}\\
  & =\sum_{k\in\Z}
e^{-i\frac k\sigma \theta} d_k \begin{pmatrix} \phi_k(r) \\ \psi_k(r) \end{pmatrix}.
  \end{split}
\end{equation*}
We note the following elementary facts:
\begin{enumerate}
  \item The radial operator
  $d_{k}$ with domain $D(d_{k})=\big[C_{c}^{\infty}((0,+\infty))\big]^{2}$
  is symmetric and densely defined;
  \item
  By the fact that the decomposition \eqref{L2Domc} is orthogonal, we have
  for the adjoint operator and its closure
  \begin{equation*}
    \mathcal{D}_\sigma^{*}=
    \bigoplus_{k\in \mathbb{Z}}d_{k}^{*}\otimes I_{2},
    \qquad
    \overline{\mathcal{D}}_\sigma=
    \bigoplus_{k\in \mathbb{Z}}\overline{d}_{k}\otimes I_{2}.
  \end{equation*}
  Hence, in order to establish the s.a.~extensions of $\mathcal{D}_\sigma$ it is enough to determine the s.a. extensions of $d_{k}$ for each $k$ and combine them in all possible ways.
\end{enumerate}
Now we consider the closure $\overline{d}_{k}$ and the adjoint $d^{*}_{k}$ respectively.\vspace{0.1cm}

\underline{\bf The closure $\overline{d}_{k}$}. Denote
\begin{align}\label{eq:pa}
\partial_\mu=\partial_r +\frac\mu r,\quad \partial_r=\frac{d}{dr},\quad \mu=\frac12+\frac k{\sigma }~or~\frac12-\frac k{\sigma }.
\end{align}
If $\phi\in C_{c}^{\infty}((0,+\infty))$, expanding the square and using integration by parts we get
\begin{equation*}
  \textstyle
  \int_{0}^{\infty}|\partial_{\mu}\phi|^{2}rdr=
  \int_0^\infty
  (|\partial_{r}\phi|^{2}+\frac{\mu^{2}}{r^{2}}|\phi|^{2})
  rdr.
\end{equation*}
Then for
$f=(\begin{smallmatrix}\phi \\ \psi\end{smallmatrix})\in \big[C_{c}^{\infty}((0,+\infty))\big]^{2}$, it gives the norm of the vector
\begin{equation*}
  \textstyle
  \|d_{k}f\|_{[L^{2}(rdr)]^2}^{2}=
  \|\phi'\|_{L^{2}(rdr)}^{2}+\|\psi'\|_{L^{2}(rdr)}^{2}+
  \|(\frac k{\sigma r}+\frac{1}{2r})\psi\|_{L^{2}(rdr)}^{2}+
  \|(\frac k{\sigma r}-\frac{1}{2r})\phi\|_{L^{2}(rdr)}^{2}.
\end{equation*}
Besides, the latter norm controls $\|f\|_{[L^{\infty}]^2}$:
\begin{equation*}
  \textstyle
  |\phi(b)|^{2}-|\phi(a)|^{2}=
  \int_{a}^{b}\partial_{r}|\phi(r)|^{2}dr\le
  \int_{a}^{b}(|\phi'|^{2}+\frac{|\phi|^{2}}{r^{2}})rdr
\end{equation*}
and hence if
$\phi\in C_{c}^{\infty}((0,+\infty))$
\begin{equation*}
  \|\phi\|_{L^{\infty}}\le\|\phi'\|_{L^{2}(rdr)}+
  \|\phi/r\|_{L^{2}(rdr)}.
\end{equation*}
If we take the series $f_{n}\in D(d_{k})$ such that both $f_{n}$ and
$d_{k}f_{n}$ converge in $[L^{2}(rdr)]^2$; since
$f_{n}\in \big[C_{c}^{\infty}\big]^2$, we have $f=\lim f_{n}$
is in $\big[C([0,+\infty))\big]^2$ and $f(0)=0$.
Therefore, we conclude that the domain of the closure $\overline{d}_{k}$
\begin{equation}\label{eq:domain}
  Y=
  D(\overline{d}_{k})=
  \big\{\phi\in L^{2}(rdr):
  \phi',\phi/r\in L^{2}(rdr),\
  \phi\in C([0,+\infty)),\
  \phi(0)=0\big\}^{2}
\end{equation}
and thus for all
$f=(\begin{smallmatrix}\phi \\ \psi\end{smallmatrix})
  \in D(\overline{d}_{k})$
\begin{equation*}
  \overline{d}_{k}f=\begin{pmatrix}0&-\partial_{\frac12-\frac k\sigma}\\ \partial_{\frac12+\frac k\sigma}&0\end{pmatrix}\begin{pmatrix}\phi\\ \psi\end{pmatrix}=
  \begin{pmatrix}
     -\partial_{\frac12-\frac k\sigma}\psi  \\
     \partial_{\frac12+\frac k\sigma}\phi
  \end{pmatrix}.
\end{equation*}
We observe that the functions in domain $Y$ are absolutely continuous so that
the distributional derivative coincides with the classical
derivative a.e.\vspace{0.1cm}

\underline{\bf The adjoint $d^{*}_{k}$}.
By definition
$f=(\begin{smallmatrix}\phi \\ \psi\end{smallmatrix})
  \in D(d^{*}_{k})$
iff there exists a $ g\in [L^{2}(rdr)]^{2}$ such that
\begin{equation*}
  \langle f,d_{k}u\rangle=
  \langle g,u\rangle
  \qquad
  \forall u\in (C_{c}^{\infty})^{2},
\end{equation*}
and we define $g=d_k^*f$. We can rewrite it as
\begin{equation*}
  f\in [L^{2}(rdr)]^{2}
  \quad\text{and}\quad
  d_{k}f\in [L^{2}(rdr)]^{2},
\end{equation*}
where $d_{k}f$ is computed in distribution sense. We conclude
\begin{equation*}
  D(d^{*}_{k})=
  \big\{f\in [L^{2}(rdr)]^{2}:d_{k}f\in [L^{2}(rdr)]^{2}\big\}
\end{equation*}
and the expression of $d^{*}_{k}$ is the same as $d_{k}$.\vspace{0.1cm}

To find all possible selfadjoint extensions of $\overline{d}_{k}$, first we have to make sure the operator $\overline{d}_{k}$ have equal deficiency indices.

\underline{\bf Deficiency indices of $\overline{d}_{k}$}.
Note that $\overline{d}_{k}$ is closed, densely defined and symmetric operator, thus von Neumann's theory works.
To extend the symmetric operator $d_k$ to a selfadjoint operator we need to compute the deficiency indices. Firstly we compute $\ker(d_{k}^{*}-i)$; it suffices to solve the system
\begin{equation}\label{eq:syseq1}
   -\partial_{\frac12-\frac k\sigma}\psi=i \phi,
  \qquad
  \partial_{\frac12+\frac k\sigma}\phi=i \psi,
\end{equation}
with $\phi,\psi\in L^{2}(rdr)$. This implies two coupling modified Bessel equations
\begin{equation}\label{eq:bessel1}
  \begin{cases}
  \big(\partial_{r}^{2}+\frac 1r \partial_{r}
    -(\frac1{2r}+\frac k{\sigma r})^2\big)\phi=\phi,\\
  \big(\partial_{r}^{2}+\frac 1r \partial_{r}
    -(\frac1{2r}-\frac k{\sigma r})^2\big)\psi=\psi.
  \end{cases}
\end{equation}
$I_{\mu},K_{\mu}$ are two linearly dependent solutions to this modified Bessel equations.
From \cite[10.25.3; 10.30.4]{DLMF}, one knows that
$I_{\mu}$ grows at infinity for all $\mu=|\frac12\pm\frac k\sigma|$ and is not in $L^{2}(rdr)$, so we drop it. Fortunately, \cite[10.25.3; 10.30.2]{DLMF} shows that $K_{\mu}$ decays exponentially as $r\to+\infty$, while near $r \sim 0$ we have
$K_{\mu}(r)\sim r^{-|\mu|}$
(indeed $K_{-\mu}=K_{\mu}$) so that
\begin{equation*}
  K_{\mu}\in L^{2}(rdr)
  \qquad\iff\qquad
  |\mu|<1.
\end{equation*}
Recall \eqref{eq:pa} and the standard recurrence relations for modified Bessel functions
\begin{equation}\label{eq:mB}
  \partial_{\mu}K_{\mu}=-K_{\mu-1},\quad \partial_{-\mu}K_{\mu}=-K_{\mu+1}
\end{equation}
which gives the component $\psi$.
Therefore we obtain such only one possible solution
\begin{equation*}
  \begin{pmatrix}
    \phi\\
     \psi
  \end{pmatrix}=
  \begin{pmatrix}
     K_{\frac k\sigma+\frac12}  \\
     iK_{\frac k\sigma-\frac12}
  \end{pmatrix}
\end{equation*}
and of course its multiples.
This solution is in the domain of $d^{*}_{k}$ iff
$K_{\mu}\in L^{2}(rdr)$ and
$\partial_{\mu}\phi=i\psi=-K_{\mu-1}\in L^{2}(rdr)$.
Consequently, we must have $K_{\frac k\sigma+\frac12},K_{\frac k\sigma-\frac12}\in L^{2}(rdr)$ and that $0<\sigma\leq 1$, then we have
\begin{equation*}
 \big |\frac k\sigma+\frac12\big|<1,\quad \big|\frac k\sigma-\frac12\big|<1
  \quad\Leftrightarrow\quad
  k=0.
\end{equation*}
We conclude:
\begin{itemize}
  \item if $k\neq0$ then
  $\ker(d^{*}_{k}-i)=\{0\}$
  \item
  $\ker(d^{*}_{0}-i)=
    \Big[\big(\begin{smallmatrix} K_{1/2}  \\ iK_{1/2}
    \end{smallmatrix}\big)\Big]$,
  with dimension $n_{+}=1$.
\end{itemize}
The analysis for $\ker(d^{*}_{k}+i)$ is similar to above.
We now solve the following system
\begin{equation}\label{eq:syseq2}
   -\partial_{\frac12-\frac k\sigma}\psi=-i \phi,
  \qquad
   \partial_{\frac12+\frac k\sigma}\phi=-i \psi,
\end{equation}
which gives the same second order modified Bessel equations for $\phi,\psi$. However, by the relations \eqref{eq:syseq2} the solutions are given by
\begin{equation*}
  \begin{pmatrix}
    \phi\\
     \psi
  \end{pmatrix}=
  \begin{pmatrix}
     K_{\frac{k}{\sigma}+\frac12}  \\
     -iK_{\frac{k}{\sigma}-\frac12}
  \end{pmatrix}.
\end{equation*}
Follow the same line of computations we obtain
\begin{itemize}
  \item if $k\neq0$ then
  $\ker(d^{*}_{k}+i)=\{0\}$
  \item
  $\ker(d^{*}_{0}+i)=
    \Big[\big(\begin{smallmatrix} K_{1/2}  \\ -iK_{1/2}
    \end{smallmatrix}\big)\Big]$,
  with dimension $n_{-}=1$.
\end{itemize}
Summing up, we get the deficiency indices $(n_+,n_-)=(1,1)$ for $k=0$. This means the operator $d_k$ allows the s.a. extensions.

\underline{\bf S.a. extensions} It is now to straightforward give a selfadjoint extension and determine the domain via the von Neumann theory:
\begin{enumerate}
  \item For $k\neq0$, the deficiency indices $(n_+,n_-)=(0,0)$, it shows
  $\overline{d}_{k}$ is selfadjoint, with domain $Y$ in \eqref{eq:domain},
  \item For $k=0$,
  \begin{equation*}
    D(d^{*}_{0})=
    Y
  \oplus
    \left[g_{+} \right]
    \oplus
    \left[g_{-}\right]
    \quad\text{where}\quad
    g_{+}=
    \begin{pmatrix} K_{\frac12}  \\ iK_{\frac12} \end{pmatrix},
    \quad
    g_{-}=
    \begin{pmatrix} K_{\frac12} \\ -iK_{\frac12} \end{pmatrix}
  \end{equation*}
  and for any $f\in D(\overline{d}_{0})$,
  $c_{1},c_{2}\in \mathbb{C}$,
  \begin{equation*}
    d^{*}_{0}(f+c_{1}g_{+}+c_{2}g_{-})=
    \overline{d}_{0}f+ic_{1}g_{+}-ic_{2}g_{-}.
  \end{equation*}
\end{enumerate}
All selfadjoint extensions $d_{0}^{\gamma}$ of $d_{0}$ are in one to one correspondence with unitary operators from $N_{+}$ onto $N_{-}$ and are determined by boundary conditions at the origin. We discuss the boundary conditions in next section. From the deficiency spaces we can derive such a unitary operator from $N_+$ to $N_-$ are multiplication by a number $e^{2i \gamma}$ with $\gamma$ real.
  That is
  \begin{equation*}
    D(d^{\gamma}_{0})=
    \{f+c g_{+}+ce^{2i \gamma}g_{-}:f\in Y,\
      c\in \mathbb{C}\}=
      Y \oplus[g_{+}+e^{2 i \gamma}g_{-}]
  \end{equation*}
  and for all $f\in Y$, $c\in \mathbb{C}$
  \begin{equation*}
    d_{0}^{\gamma}(f+c g_{+}+ce^{2i \gamma}g_{-})=
    \overline{d}_{0}f+ic g_{+}-ice^{2i \gamma}g_{-}.
  \end{equation*}
  In order to get a more concise representation, we rewritten
  the last two formulas as:
  \begin{equation*}
    c g_{+}+ce^{2i \gamma}g_{-}=
    ce^{i \gamma}
    \begin{pmatrix}
      e^{-i \gamma} K_{\frac12}+e^{i \gamma}K_{\frac12}
      \\
      ie^{-i \gamma}K_{\frac12} -ie^{i \gamma}K_{\frac12}
    \end{pmatrix}=
    2ce^{i \gamma}
    \begin{pmatrix}
      \cos \gamma\cdot K_{\frac12}
      \\
      \sin \gamma\cdot K_{\frac12}
    \end{pmatrix}.
  \end{equation*}
  \begin{equation*}
    ic g_{+}-ice^{2i \gamma}g_{-}=
    ic e^{i \gamma}
    \begin{pmatrix}
      e^{-i \gamma} K_{\frac12}-e^{i \gamma}K_{\frac12}
      \\
      ie^{-i \gamma}K_{\frac12} +ie^{i \gamma}K_{\frac12}
    \end{pmatrix}=
    2ce^{i \gamma}
    \begin{pmatrix}
      \sin \gamma\cdot K_{\frac12}
      \\
      -\cos \gamma\cdot K_{\frac12}
    \end{pmatrix}.
  \end{equation*}
  Thus we have
  \begin{equation*}
    D(d^{\gamma}_{0})=
    \{f+c
    \left(
    \begin{smallmatrix}
      \cos \gamma \cdot K_{\frac12} \\
      \sin \gamma \cdot K_{\frac12}
    \end{smallmatrix}
    \right)
    : f\in Y,\ c\in \mathbb{C}\}=
    Y \oplus
    \left[\left(
    \begin{smallmatrix}
      \cos \gamma \cdot K_{\frac12} \\
      \sin \gamma \cdot K_{\frac12}
    \end{smallmatrix}
    \right)\right]
  \end{equation*}
  and
  \begin{equation}\label{eq:d0g}
    d_{0}^{\gamma}
    \left(
    f+c
    \left(
    \begin{smallmatrix}
      \cos \gamma \cdot K_{\frac12} \\
      \sin \gamma \cdot K_{\frac12}
    \end{smallmatrix}
    \right)
    \right)=
  \overline{d}_{0}f+c
    \left(
    \begin{smallmatrix}
      \sin \gamma \cdot K_{\frac12} \\
      -\cos \gamma \cdot K_{\frac12}
    \end{smallmatrix}
    \right).
  \end{equation}

In what follows we still write
 \begin{align}\label{eq:dkg}
 d^\gamma_k=\overline{d}_k,\qquad D(d^\gamma_k)=Y,\qquad k\neq0.
 \end{align}
Finally we shall give the representation of all extensions of $\D_{\sigma}$ initially defined on $C^\infty_0(X\setminus0)$. Recall that $Y$ defined in \eqref{eq:domain} is the domain of the closures $\overline{d}_k$. For any $\gamma\in\R$ and $k$, we define the domain as
 \begin{equation}
 \begin{aligned}
 Y^\gamma&=D(d^\gamma_k)=D(\overline{d}_k)\oplus\left[g_{+} \right]
    \oplus
    \left[g_{-}\right]\\&=\left[\left(
    \begin{smallmatrix}
      \phi \\
      \psi
    \end{smallmatrix}
    \right)\right] \oplus
    \left[\left(
    \begin{smallmatrix}\cos\gamma\cdot K_{\frac12}\\\sin\gamma\cdot K_{\frac12}\end{smallmatrix}\right)\right],\quad \left(
    \begin{smallmatrix}
      \phi \\
      \psi
    \end{smallmatrix}
    \right)\in Y.
 \end{aligned}
 \end{equation}
Hence, the action of $d^\gamma_0$ can be written simply as follows:
 \begin{align*}
 d^\gamma_0\begin{pmatrix}
     \phi_{0}  \\
     \psi_{0}
  \end{pmatrix}
  =c\begin{pmatrix}
      \sin \gamma \cdot K_{\frac12} \\
      -\cos \gamma \cdot K_{\frac12}
    \end{pmatrix}+
  \begin{pmatrix}-\partial_{\frac12}\tilde{\psi}_0(r)\\ \partial_{\frac12}\tilde{\phi}_0(r)\end{pmatrix}, \quad \forall \;\begin{pmatrix}\phi_0\\ \psi_0\end{pmatrix}=c\begin{pmatrix}
      \cos \gamma \cdot K_{\frac12} \\
      \sin \gamma \cdot K_{\frac12}
    \end{pmatrix}+\begin{pmatrix}
     \tilde{\phi}_{0}  \\
     \tilde{\psi}_{0}
  \end{pmatrix}\in Y^\gamma.
 \end{align*}

Then we obtain the following result:
\begin{theorem}[Selfadjoint extensions for Dirac]\label{th:sa}
Any selfadjoint extension of $\D_{\sigma}$ is of the form $\D_{\sigma,\gamma}$, where $\gamma\in [0,2\pi)$,
 \begin{align*}
 D(\D_{\sigma,\gamma})=(Y^\gamma\otimes h_0)\oplus_{k\neq0}(Y\otimes h_k)
 \end{align*}
and
 \begin{align*}
 \D_{\sigma,\gamma}=(d^\gamma_0\otimes I_2)\oplus_{k\neq0}(\overline{d}_k\otimes I_2).
 \end{align*}
Any $f\in D(\D_{\sigma,\gamma})$ can be written as
 \begin{align*}
 f=\sum_{k\in\Z}\begin{pmatrix}\phi_k(r)e^{-i\frac{k}{\sigma}\theta}\\
 \psi_k(r)e^{-i\frac{k}{\sigma}\theta}\end{pmatrix}
 \end{align*}
with $\left(\begin{smallmatrix}\phi_k(r)\\ \psi_k(r)\end{smallmatrix}\right)\in Y$ for $k\neq0$ while $\left(\begin{smallmatrix}\phi_0(r)\\ \psi_0(r)\end{smallmatrix}\right)\in Y^\gamma$ for $k=0$, i.e., for some $c\in \C$ and $\left(\begin{smallmatrix}\tilde{\phi}_0\\ \tilde{\psi}_0\end{smallmatrix}\right)\in Y$,
\begin{align*}
\begin{pmatrix}\phi_0(r)\\ \psi_0(r)\end{pmatrix}=c\begin{pmatrix}\cos\gamma\cdot K_{\frac12}(r)\\ \sin\gamma\cdot K_{\frac12}(r)\end{pmatrix}+\begin{pmatrix}\tilde{\phi}_0(r)\\ \tilde{\psi}_0(r)\end{pmatrix}.
\end{align*}
The action of $\D_{\sigma,\gamma}$ on such $f$ is given by
 \begin{align}
 \D_{\sigma,\gamma}f=c
    \begin{pmatrix}
      \sin \gamma \cdot K_{\frac12}(r) \\
      -\cos \gamma \cdot K_{\frac12}(r)
    \end{pmatrix}
    +\sum_{k\neq0}\begin{pmatrix}-e^{-i\frac{k}{\sigma}\theta} \partial_{\frac12- \frac{k}{\sigma}}\psi_k(r)\\ e^{-i\frac{k}{\sigma}\theta} \partial_{\frac12+\frac{k}{\sigma}}\phi_k(r)\end{pmatrix}.
 \end{align}
 \end{theorem}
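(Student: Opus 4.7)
The proof is essentially an assembly of the results already established in this section: the block--diagonal structure of $\mathcal{D}_\sigma$ with respect to the orthogonal decomposition \eqref{L2Domc}, together with the complete deficiency index analysis of each radial block $d_k$. My plan is therefore to organize these pieces into a clean statement, rather than to prove anything essentially new.

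First I would invoke the orthogonality of the decomposition $[L^2(X)]^2 = \bigoplus_{k \in \mathbb{Z}} L^2(rdr)^2 \otimes h_k$ and the identities $\mathcal{D}_\sigma^* = \bigoplus_k d_k^* \otimes I_2$ and $\overline{\mathcal{D}}_\sigma = \bigoplus_k \overline{d}_k \otimes I_2$ noted in item (2) at the beginning of the section. A standard fact about orthogonal direct sums of symmetric operators states that selfadjoint extensions of a direct sum are exactly direct sums of selfadjoint extensions of the summands; this reduces the classification of s.a.\ extensions of $\mathcal{D}_\sigma$ to the classification for each $d_k$, combined independently.

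Next I would combine the two deficiency index computations. For $k \neq 0$ we showed $(n_+,n_-) = (0,0)$, so $\overline{d}_k$ is already selfadjoint with domain $Y$; there is no choice to be made. For $k=0$ we showed $(n_+,n_-) = (1,1)$, and then applied von Neumann's theorem to parametrize selfadjoint extensions by unitaries $N_+ \to N_-$; since $\dim N_\pm = 1$, these unitaries are multiplications by $e^{2i\gamma}$ with $\gamma \in [0,2\pi)$. The corresponding domain $Y^\gamma = Y \oplus [(\cos\gamma\cdot K_{1/2}, \sin\gamma\cdot K_{1/2})^T]$ and the action \eqref{eq:d0g} were already computed explicitly. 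Putting the two cases together yields the claimed formula for $D(\mathcal{D}_{\sigma,\gamma})$ and for $\mathcal{D}_{\sigma,\gamma}$.

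Finally, to obtain the stated formula for $\mathcal{D}_{\sigma,\gamma} f$, I would take $f \in D(\mathcal{D}_{\sigma,\gamma})$, expand it as $f = \sum_k (\phi_k, \psi_k)^T e^{-ik\theta/\sigma}$, separate the $k=0$ summand as $(\phi_0,\psi_0)^T = c(\cos\gamma\cdot K_{1/2}, \sin\gamma\cdot K_{1/2})^T + (\tilde\phi_0, \tilde\psi_0)^T$ with $(\tilde\phi_0,\tilde\psi_0) \in Y$, and apply $\mathcal{D}_{\sigma,\gamma}$ term by term: the $k=0$ term gives the contribution from \eqref{eq:d0g}, while for $k \neq 0$ the action is simply $\overline{d}_k$ on $Y$, i.e.\ the differential operator $(-\partial_{1/2 - k/\sigma}\psi_k,\,\partial_{1/2+k/\sigma}\phi_k)^T$. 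The only mildly delicate point is verifying that this termwise action is consistent (i.e.\ that the sum converges in $[L^2(X)]^2$ when $f \in D(\mathcal{D}_{\sigma,\gamma})$), which follows from the spectral--theoretic definition of the direct sum of selfadjoint operators. No real obstacle arises; the work has already been done in the preceding pages and this theorem is the bookkeeping step.
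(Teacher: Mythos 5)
Your proposal takes the same route as the paper: reduce via the orthogonal decomposition \eqref{L2Domc} to the radial blocks $d_k$, use the deficiency index computation ($(0,0)$ for $k\neq0$, $(1,1)$ for $k=0$), apply the von Neumann parametrization for $d_0$, and assemble the explicit domains and action — exactly the bookkeeping the paper performs before stating the theorem. One caution: the ``standard fact'' you cite — that selfadjoint extensions of an orthogonal direct sum are precisely direct sums of selfadjoint extensions of the summands — is \emph{false} in general (if two or more blocks had nontrivial deficiency indices, the von Neumann unitary on $\bigoplus_k N_+(d_k)$ need not be block-diagonal, so there would exist extensions mixing the blocks); it holds here only because the deficiency spaces vanish for all $k\neq0$, forcing $N_\pm(\mathcal{D}_\sigma)=N_\pm(d_0)$ so that every unitary is automatically block-diagonal — a point worth stating explicitly, since the paper's ``combine them in all possible ways'' has the same slight imprecision.
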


\begin{remark}\label{rem:H}
Under the decomposition \eqref{L2Domc}, regardless of the self-adjoint extension, we informally obtain
 \begin{align*}
 \D_\sigma^2 f(x)&=\sum_{k\in\Z}\D^2_{\sigma}\begin{pmatrix}e^{-i\frac k\sigma \theta}&0\\0&e^{-i\frac k\sigma \theta}\end{pmatrix}\begin{pmatrix}f^+_k(r)\\f^-_k(r)\end{pmatrix}\\
 &=\sum_{k\in\Z}\begin{pmatrix}0&-(\partial_r+\frac1{2r})+\frac 1rD_{\mathbb{S}_\sigma^1}\\(\partial_r+\frac1{2r})+\frac 1rD_{\mathbb{S}_\sigma^1}&0
 \end{pmatrix}^2
 \begin{pmatrix}f^+_k(r)e^{-i\frac k\sigma \theta}\\f^-_k(r)e^{-i\frac k\sigma \theta}\end{pmatrix}\\
 \end{align*}
and by \eqref{eq:a-eigen} we get
 \begin{align*}
 \D_\sigma^2 f(x)&=\sum_{k\in\Z}\begin{pmatrix}e^{-i\frac k\sigma \theta}&0\\0&e^{-i\frac k\sigma \theta}\end{pmatrix}
 \begin{pmatrix}0&-(\partial_r+\frac1{2r})+\frac{k}{\sigma r}\\
 (\partial_r+\frac1{2r})+\frac{k}{\sigma r}&0
 \end{pmatrix}^2\begin{pmatrix}f^+_k(r)\\f^-_k(r)\end{pmatrix}\\
 &=\sum_{k\in\Z}\begin{pmatrix}[-\partial_r^2-\frac 1r\partial_r+\frac1{r^2}(\frac1{2}+\frac k{\sigma})^2]f^+_k(r)e^{-i\frac k\sigma \theta}\\ [-\partial_r^2-\frac 1r\partial_r+\frac1{r^2}(\frac1{2}-\frac k{\sigma})^2]f^-_k(r)e^{-i\frac k\sigma \theta}\end{pmatrix}.
 \end{align*}
 We define
 \begin{align}
L_k:=\begin{pmatrix}H_{\nu_+}&0\\0&H_{\nu_-}  \end{pmatrix}=\begin{pmatrix}0&-(\partial_r+\frac1{2r})+\frac{k}{\sigma r}\\
 (\partial_r+\frac1{2r})+\frac{k}{\sigma r}&0
 \end{pmatrix}^2
\end{align}
where $\nu_\pm=|\frac1{2}\pm\frac k{\sigma}|$ and $H_{\nu_\pm}=-\partial_r^2-\frac 1r\partial_r+\frac1{r^2}(\frac1{2}\pm\frac k{\sigma})^2$. Corresponding, we define
 \begin{align}\label{eq:square}
 H:=\begin{pmatrix}H^+_\sigma&0\\ 0&H^-_\sigma\end{pmatrix}=\bigoplus_{k\in\Z}L_k\otimes I_2.
 \end{align}
where  $H^\pm_\sigma:=-\partial_r^2-\frac 1r\partial_r+\frac1{r^2}(\frac12\pm i\partial_\theta)^2$ is a positive operator on the space $X=(0,+\infty)\times\mathbb{S}^1_{\sigma}$, initially defined on $C_{c}^{\infty}(X)$
(see \cite{Zhang1}); This is a second order differential operator whose spectrum is bounded from below, so we have the Friedrichs extension denoted by $H^\pm_\sigma$ again.
More precisely, similar to above, we extend the domain of  $L_k$ to
 \begin{equation}
 \begin{split}
 D(L_k) =\{\phi\in L^2(rdr): &\phi', \phi'', \phi'/r, \phi/r^2\in L^2(rdr),\\ & \phi\in C^1([0,\infty))), \phi(0)=\phi'(0)=0 \}^2.
 \end{split}
 \end{equation}
 such that the radial operator $L_k$ is self-adjoint. Hence we have the Friedrichs extension of $H$. Then, regardless of the self-adjoint extension, we informally obtain
 \begin{align}\label{equ:D2=H}
 \D_\sigma^2 f(x)&=\sum_{k\in\Z}\begin{pmatrix}e^{-i\frac k\sigma \theta}&0\\0&e^{-i\frac k\sigma \theta}\end{pmatrix}
L_k\begin{pmatrix}f^+_k(r)\\f^-_k(r)\end{pmatrix}= H f(x).
 \end{align}

%
%

\end{remark}

\section{The relativistic Hankel transform 
and the Dirac propagator}
\label{sec:hantra}
In this section, we construct the Dirac propagator by deriving the generalized eigenfunctions of $d^\gamma_k$, and
defining the relativistic Hankel transform.
This is closed to the paper \cite{CDYZh} about Dirac equation with the Aharonov-Bohm potential.

\subsection{The generalized eigenfunctions}
In this subsection, we begin with the radial eigenvalue problem by the composition \eqref{L2Domc}
 \begin{align}\label{eq:eg}
 d^\gamma_k\Psi_k(\rho r)=\begin{pmatrix}0&-(\partial_r+\frac1{2r})+\frac{k}{\sigma r}\\
 (\partial_r+\frac1{2r})+\frac{k}{\sigma r}&0
 \end{pmatrix}\begin{pmatrix}\psi^1_k(\rho r)\\ \psi_k(\rho r)\end{pmatrix}=\rho \begin{pmatrix}\psi^1_k(\rho r)\\ \psi_k(\rho r)\end{pmatrix},\quad \rho\in\R.
 \end{align}
As well known the weak solutions of the eigenvalue equation \label{eq:eg} are the generalized eigenfunctions of $d^\gamma_k$. Taking the domain $D(d^\gamma_k)$ into account, one should not ignore the boundary conditions. For $k\neq0$, the boundary condition is $\Psi_k(0)=0$; for $k=0$, we need to care about the singular part.

By scaling, it is sufficient to verify \eqref{eq:eg} for $\rho=1$ due to the scaling properties of $d^\gamma_k$. Recalling the definition of $d_k$ in \eqref{eq:dk}, if $\rho=1$ then \eqref{eq:eg}
which reduces to the Bessel equations
 \begin{align}
 \frac{d^2}{dr^2}\psi^1_k(r)+\frac1r \frac{d}{dr}\psi^1_k(r)+\Big(1-\frac1{r^2}\big(\frac12+\frac{k}{\sigma}\big)^2\Big)\psi^1_k(r)=0,\\
 \frac{d^2}{dr^2}\psi^2_k(r)+\frac1r \frac{d}{dr}\psi^2_k(r)+\Big(1-\frac1{r^2}\big(\frac12-\frac{k}{\sigma}\big)^2\Big)\psi^2_k(r)=0.
 \end{align}
Then we get the solutions of \eqref{eq:eg} for $\rho=1$
 \begin{align}\label{eq:so}
 \begin{pmatrix}\psi^1_k( r)\\ \psi^2_k( r)\end{pmatrix}=\begin{pmatrix}J_{\frac{k}{\sigma}+\frac12}(r)\\ J_{\frac{k}{\sigma}-\frac12}(r)\end{pmatrix}\; \text{or}\;\begin{pmatrix}J_{-(\frac{k}{\sigma}+\frac12)}(r)\\ -J_{-(\frac{k}{\sigma}-\frac12)}(r)\end{pmatrix}.
 \end{align}
As far as the generalized eigenfunctions are concerned, by the asymptotic behaviour of Bessel functions
as $x\to0$
$$
J_\nu(x) = \frac{x^\nu}{2^\nu \Gamma(1+\nu)}+O(x^{\nu+2}),
$$
the square integrability at the origin restricts $\frac{k}{\sigma} \not\in(-\frac12,\frac12)$, thus for $k\neq0$ we take the generalized eigenfunctions as follows
 \begin{align}\label{egk}
 \begin{cases}
 \psi^1_k(r)=J_{|\frac{k}{\sigma}+\frac12|}(r),\; \psi^2_k(r)=J_{|\frac{k}{\sigma}-\frac12|}(r)& {\rm if}\; k\geq1,\\
 \psi^1_k(r)=J_{|\frac{k}{\sigma}+\frac12|}(r),\; \psi^2_k(r)=-J_{|\frac{k}{\sigma}-\frac12|}(r)& {\rm if}\; k\leq-1.
 \end{cases}
 \end{align}

Next we consider the case $k=0$. The choices in \eqref{eq:so} for $k=0$ both lead to solutions that are square integrable near the origin while one component is more singular than the other. Therefore, Theorem \ref{th:sa} states that the choice of the selfadjoint extension will determine the choice of the generalized eigenfunction in the case $k=0$.
The eigenfunction admits the form
 \begin{equation}\label{gef0}
 \begin{pmatrix}
 \psi^1_0(r)\\
 \psi^2_0(r)\end{pmatrix}= c_1 \begin{pmatrix}J_{\frac12}(r)\\J_{-\frac12}(r)\end{pmatrix}+
 c_2\begin{pmatrix}J_{-\frac12}(r)\\ - J_{\frac12}(r)\end{pmatrix},
 \end{equation}
where $c_1, c_2\in \C$.

In view of the radial Hamiltonian
 \begin{align*}
 d^\gamma_0=\begin{pmatrix}0&-\frac{d}{dr}-\frac1{2r}\\ \frac{d}{dr}+\frac1{2r}&0 \end{pmatrix}
 \end{align*}
is symmetric, $\langle d^\gamma_0\chi(r),\varphi(r) \rangle_{L^2(rdr)}=\langle\chi(r),d^\gamma_0\varphi(r)\rangle_{L^2(rdr)}$ for all $\varphi, \chi\in D(d^\gamma_0)$.
Then, for $\varphi(r)=\left(
\begin{smallmatrix} \varphi_1(r) \\ \varphi_2(r)
\end{smallmatrix}
\right)$
and
$\chi(r)=\left(\begin{smallmatrix} \chi_1(r) \\ \chi_2(r)
\end{smallmatrix} \right)$ it can be easily shown as

 \begin{align*}
 \langle &d^\gamma_0\chi(r),\varphi(r) \rangle_{L^2(rdr)}=\int^\infty_0\begin{pmatrix}0&-\frac{d}{dr}-\frac1{2r}\\ \frac{d}{dr}+\frac1{2r}&0\end{pmatrix} \begin{pmatrix}\chi_1(r)\\ \chi_2(r)\end{pmatrix}\cdot \begin{pmatrix}\overline{\varphi_1}(r)\\ \overline{\varphi_2}(r)\end{pmatrix} rdr\\
 &=(\chi_1\overline{\varphi_2}r-\chi_2\overline{\varphi_1}r)|^\infty_0+ \int^\infty_0 \begin{pmatrix}\chi_1(r)\\ \chi_2(r)\end{pmatrix}\cdot \begin{pmatrix}0&-\frac{d}{dr}-\frac1{2r}\\ \frac{d}{dr}+\frac1{2r}&0\end{pmatrix} \begin{pmatrix}\overline{\varphi_1}(r)\\ \overline{\varphi_2}(r)\end{pmatrix} rdr
 \end{align*}
Therefore the symmetric property of the Dirac operator $d^\gamma_0$ requires certain boundary conditions
 \begin{align}\label{eq:bd}
 (\chi_1\overline{\varphi_2}r-\chi_2\overline{\varphi_1}r)|^\infty_0=0
 \end{align}
i.e.
 \begin{align}
 \lim_{r\rightarrow0}r(\chi_1\overline{\varphi_2}-\chi_2\overline{\varphi_1}) =\lim_{r\rightarrow\infty}r(\chi_1\overline{\varphi_2} -\chi_2\overline{\varphi_1})=0.
 \end{align}
Apply this to the spinors
$\varphi(r)=\left(
\begin{smallmatrix}
  \cos \gamma \cdot K_{\frac12} \\
  \sin \gamma \cdot K_{\frac12}
\end{smallmatrix}
\right)$
and
$\chi(r)=\left(
\begin{smallmatrix}
  \psi^1_0(r) \\
     \psi^2_0(r)
\end{smallmatrix}
\right)$,
which belong to $D(d^\gamma_0)$.
By the decay properties of Bessel function $J_{\nu}$, we can get
$$c_1=\tilde{c}\sin\gamma,\quad c_2=\tilde{c}\cos\gamma.$$
Then the boundary conditions \eqref{eq:bd} lead to the following
 \begin{align}\label{eq:s0}
 \Psi_0(r)=\begin{pmatrix}\psi^1_0(r)\\ \psi^2_0(r)\end{pmatrix}
 =\tilde{c}\begin{pmatrix}\sin\gamma J_{\frac12}(r)+\cos\gamma J_{-\frac12}(r)\\ \sin\gamma J_{-\frac12}(r)-\cos\gamma J_{\frac12}(r)\end{pmatrix}.
 \end{align}

\begin{remark}\rm
  For negative energies the computation is completely
  analogous. The negative eigenfunctions be represented by
  \begin{equation}\label{negen}
  \left(\begin{matrix}\phi_k^1(\rho r)\\ \phi_k^2(\rho r)\end{matrix}\right)=
  \left(\begin{matrix}\psi_k^1(|\rho| r)\\ -\psi_k^2(|\rho| r)\end{matrix}\right),\qquad \rho<0.
  \end{equation}
\end{remark}

\subsection{The relativistic Hankel transform}\label{sec:hank}

Firstly, from \eqref{egk} and \eqref{eq:s0} we choose the normalized, complete and orthonormal  eigenfunctions in the generalized sense: given $\sigma\in (0,1]$
 \begin{itemize}
 \item for $k\geq1$,
 \begin{align}\label{Psik+}
 \Psi_k(r)=\frac1{\sqrt{2}}\begin{pmatrix}J_{\frac{k}{\sigma}+\frac12}(r)\\ J_{\frac{k}{\sigma}-\frac12}(r)\end{pmatrix}
 \end{align}
 \item for $k\leq-1$,
 \begin{align}\label{Psik-}
  \Psi_k(r)=\frac1{\sqrt{2}}\begin{pmatrix}J_{-(\frac{k}{\sigma}+\frac12)}(r)\\ -J_{-(\frac{k}{\sigma}-\frac12)}(r)\end{pmatrix}
 \end{align}
 \item for $k=0$,
 \begin{align}\label{Psi0}
 \Psi_0(r)=\frac1{\sqrt{2}}\begin{pmatrix}\sin\gamma J_{\frac12}(r)+\cos\gamma J_{-\frac12}(r)\\ \sin\gamma J_{-\frac12}(r)-\cos\gamma J_{\frac12}(r)\end{pmatrix}.
 \end{align}
 \end{itemize}
 Keeping into account the case of
 negatives energies \eqref{negen}, we define the relativistic Hankel transform as a sequence of operators
 \begin{align}
 \mathcal{P}_k: [L^2(\R^+,rdr)]^2\rightarrow L^2(\R,|\rho|d\rho)
 \end{align}
acting on spinors $f(r)=\left(\begin{smallmatrix}f_1(r)\\f_2(r)\end{smallmatrix} \right)$, $r\in(0,\infty)$, as
 \begin{align}\label{eq:Pk}
 \Pk f(\rho)=\begin{cases} \int^\infty_0\Psi_k(\rho r)^T\cdot f(r)rdr &\text{if}\; \rho>0\\
 \int^\infty_0\Psi_k(|\rho| r)^T\cdot f(r)rdr &\text{if}\; \rho<0. \end{cases}
 \end{align}
Recall the Hankel transforms $\mathcal{H}_\nu$ of order $\nu$, defined as
 \begin{align}\label{Hankel}
 \mathcal{H}_\nu \phi(\rho)=\int^\infty_0J_{\nu}(r\rho )\phi(r)rdr, \quad \rho>0.
 \end{align}
Then we can write $\Pk$ as a combination of Hankel transforms:
\begin{itemize}
\item for $k\geq1$, with $\nu=\frac{k}{\sigma}+\frac12$,
 \begin{align}\label{pk+}
 \Pk f(\rho)=2^{-\frac12}\begin{cases}\mathcal{H}_\nu f_1(\rho)+\mathcal{H}_{\nu-1}f_2(\rho)  &\text{if}\; \rho>0,\\
 \mathcal{H}_\nu f_1(|\rho|)-\mathcal{H}_{\nu-1}f_2(|\rho|)  &\text{if}\; \rho<0, \end{cases}
 \end{align}

\item for $k\leq-1$, with $\nu=-(\frac{k}{\sigma}+\frac12)$,
 \begin{align}\label{pk-}
 \Pk f(\rho)=2^{-\frac12}\begin{cases}\mathcal{H}_\nu f_1(\rho)-\mathcal{H}_{\nu+1}f_2(\rho)  &\text{if}\; \rho>0,\\
 \mathcal{H}_\nu f_1(|\rho|)+\mathcal{H}_{\nu+1}f_2(|\rho|)  &\text{if}\; \rho<0, \end{cases}
 \end{align}

\item for $k=0$,
 \begin{align}\label{pk0}
 \mathcal{P}_0 f(\rho)=2^{-\frac12} \begin{cases}\sin\gamma[\mathcal{H}_{\frac12}f_1(\rho) +\mathcal{H}_{-\frac12}f_2(\rho)]+\cos\gamma[\mathcal{H}_{-\frac12}f_1(\rho) -\mathcal{H}_{\frac12}f_2(\rho)] &\text{if}\; \rho>0,\\
 \sin\gamma[\mathcal{H}_{\frac12}f_1(|\rho|) -\mathcal{H}_{-\frac12}f_2(|\rho|)]+\cos\gamma[\mathcal{H}_{-\frac12}f_1(|\rho|) +\mathcal{H}_{\frac12}f_2(|\rho|)] &\text{if}\; \rho<0.  \end{cases}
 \end{align}
\end{itemize}
Since the Hankel transform $\mathcal{H}_\nu$ is a unitary operator on $L^2(\R^+,rdr)$ and satisfies
 \begin{align}\label{eq:Hk}
 \mathcal{H}^2_\nu=I,\quad \mathcal{H}_\nu\partial_{\nu+1}=r\mathcal{H}_{\nu+1},\quad \mathcal{H}_{\nu+1}\partial_{-\nu}=-r\mathcal{H}_{\nu},
 \end{align}
it shows $\mathcal{P}_k$ is a well defined operator for any $k$.

Now we verify that the operator $\mathcal{P}_k$ diagonalizes $d_k$. Using \eqref{eq:Hk} we obtain
\begin{itemize}
\item for $k\geq1$, with $\nu=\frac{k}{\sigma}+\frac12$,
 \begin{align*}
 \Pk \overline{d}_k\begin{pmatrix}\phi\\ \psi\end{pmatrix}&=\Pk \begin{pmatrix}-\partial_{-(\nu-1)}\psi\\ \partial_{\nu}\phi\end{pmatrix}(\rho)\\
 &=2^{-\frac12}\begin{cases}\mathcal{H}_{\nu}(-\partial_{-(\nu-1)}\psi)(\rho) +\mathcal{H}_{\nu-1}(\partial_\nu \phi)(\rho), &\rho>0,\\
 \mathcal{H}_{\nu}(-\partial_{-(\nu-1)}\psi(|\rho|)) -\mathcal{H}_{\nu-1}(\partial_\nu \phi(|\rho|)), &\rho<0,\end{cases}\\
 &=2^{-\frac12}\begin{cases}
 \rho(\mathcal{H}_{\nu}\phi(\rho)+\mathcal{H}_{\nu-1}\psi(\rho)), \quad &\rho>0,\\
 |\rho|(-\mathcal{H}_{\nu}\phi(|\rho|)+\mathcal{H}_{\nu-1}\psi(|\rho|)), \quad &\rho<0,
 \end{cases}\\
 &=\rho \Pk\begin{pmatrix}\phi\\ \psi\end{pmatrix}.
 \end{align*}
\item for $k\leq-1$, with $\nu=-(\frac{k}{\sigma}+\frac12)$,
 \begin{align*}
 \Pk \overline{d}_k\begin{pmatrix}\phi\\ \psi\end{pmatrix}&=\Pk \begin{pmatrix}-\partial_{\nu+1}\psi\\ \partial_{-\nu}\phi\end{pmatrix}(\rho) \\ &=2^{-\frac12}\begin{cases}\mathcal{H}_{\nu}(-\partial_{\nu+1}\psi) (\rho) -\mathcal{H}_{\nu+1}(\partial_{-\nu} \phi)(\rho), &\rho>0,\\
 \mathcal{H}_{\nu}(-\partial_{\nu+1}\psi(|\rho|)) -\mathcal{H}_{\nu+1}(\partial_{-\nu} \phi(|\rho|)), &\rho<0,\end{cases}\\
 &=2^{-\frac12}\begin{cases}
 \rho(\mathcal{H}_{\nu}\phi(\rho)-\mathcal{H}_{\nu+1}\psi(\rho)), \quad &\rho>0,\\
 |\rho|(\mathcal{H}_{\nu}\phi(|\rho|)+\mathcal{H}_{\nu+1}\psi(|\rho|)), \quad &\rho<0,
 \end{cases}\\
 &=\rho \Pk\begin{pmatrix}\phi\\ \psi\end{pmatrix}.
 \end{align*}
\item for $k=0$, denote
 \begin{align*}
 v_{\gamma}=\begin{pmatrix}\cos\gamma\cdot K_{\frac12}(r)\\ \sin\gamma\cdot K_{\frac12}(r)\end{pmatrix},\quad d^\gamma_0 v_\gamma=\begin{pmatrix}\sin\gamma\cdot K_{\frac12}(r)\\ -\cos\gamma\cdot K_{\frac12}(r)\end{pmatrix},
 \end{align*}
 using the identity
 $$\mathcal{H}_\nu K_{\nu}(\rho)=\frac{\rho^\nu}{1+\rho^2},\quad \nu>-1,$$
 we have
 \begin{align*}
 \mathcal{P}_0 v_\gamma(\rho)=\frac{2^{-\frac12}}{1+\rho^2}\begin{cases}
 \sin\gamma(\rho^{\frac12}\cos\gamma+\rho^{-\frac12}\sin\gamma)+\cos\gamma( \rho^{-\frac12}\cos\gamma-\rho^{\frac12}\sin\gamma), &\rho>0, \\
 \sin\gamma(|\rho|^{\frac12}\cos\gamma-|\rho|^{-\frac12}\sin\gamma)+\cos\gamma( |\rho|^{-\frac12}\cos\gamma+|\rho|^{\frac12}\sin\gamma), &\rho<0,
 \end{cases}
 \end{align*}
and
 \begin{align*}
 \mathcal{P}_0 d^\gamma_0 v_{\gamma}(\rho)=\frac{2^{-\frac12}}{1+\rho^2}\begin{cases}
 \sin\gamma(\rho^{\frac12}\sin\gamma-\rho^{-\frac12}\cos\gamma)+\cos\gamma( \rho^{-\frac12}\sin\gamma+\rho^{\frac12}\cos\gamma), &\rho>0, \\
 \sin\gamma(|\rho|^{\frac12}\sin\gamma+|\rho|^{-\frac12}\cos\gamma)+\cos\gamma( |\rho|^{-\frac12}\sin\gamma-|\rho|^{\frac12}\cos\gamma), &\rho<0.
 \end{cases}
 \end{align*}
Summing up we get
 \begin{align*}
 \mathcal{P}_0 d^\gamma_0 v_{\gamma}(\rho)-\rho \mathcal{P}_0v_\gamma(\rho)=2^{-\frac12}\begin{cases}0, &\text{if}\; \rho>0,\\ 2\sin\gamma\cos\gamma |\rho|^{-\frac12}, &\text{if}\; \rho<0.\end{cases}
 \end{align*}
It's easy to see that
 \begin{align}\label{eq:rH0}
 \mathcal{P}_0d^\gamma_0 v_{\gamma}=\rho\mathcal{P}_0v_\gamma \qquad \text{holds on $Y^\gamma$\; iff} \qquad \sin\gamma\cos\gamma=0.
 \end{align}
\end{itemize}
Then in what follows we have to assume that $\sin \gamma\cos \gamma=0$.
Note that if $\cos \gamma=0$ then $\mathcal{P}_{0}$
takes the form \eqref{pk+} with
$\nu=\frac12$, while if $\sin \gamma=0$ then
$\mathcal{P}_{0}$ has the form \eqref{pk-} with $\nu=-\frac12$.

Finally, it remains to prove $\Pk$ is invertible. Given $\phi\in L^2(\R, |\rho|d\rho)$, write $\phi_+(\rho)=\phi(\rho)$ and $\phi_{-}(\rho)=\phi(-\rho)$ for $\rho>0$. Then we can write
 \begin{align}\label{ipk+}
 \Pk^{-1}\phi(r)=2^{-\frac12} \begin{pmatrix}
 \mathcal{H}_{\nu}&\mathcal{H}_{\nu}\\
 \mathcal{H}_{\nu-1}&-\mathcal{H}_{\nu-1}
 \end{pmatrix}\begin{pmatrix}\phi_+\\ \phi_-\end{pmatrix},\quad &\nu=\frac{k}{\sigma}+\frac12,\;k\geq1,\\\label{ipk-}
 \Pk^{-1}\phi(r)=2^{-\frac12} \begin{pmatrix}
 \mathcal{H}_{\nu}&\mathcal{H}_{\nu}\\
 -\mathcal{H}_{\nu+1}&\mathcal{H}_{\nu+1}
 \end{pmatrix}\begin{pmatrix}\phi_+\\ \phi_-\end{pmatrix},\quad &\nu=-(\frac{k}{\sigma}+\frac12),\;k\leq-1,
 \end{align}
and the fact $\mathcal{H}_{\nu}^{2}=I$ verifies
that this operator actually inverts $\mathcal{P}_{k}$.
It is also clear that $\mathcal{P}_{k}^{-1}=\mathcal{P}_{k}^{*}$
so that $\mathcal{P}_{k}$ is unitary.

By \eqref{eq:rH0}, for $k=0$ we restricted $\gamma$
such that $\sin \gamma\cos \gamma=0$.
Comparing with \eqref{pk+} and \eqref{pk-}, accordingly we have
if $\cos \gamma=0$ then $\mathcal{P}_{0}^{-1}$
is given by formula \eqref{ipk+} with $\nu=\frac12$,
while if $\sin \gamma=0$ then $\mathcal{P}_{0}^{-1}$
is given by formula \eqref{ipk-} with $\nu=-\frac12$ (we use the well known fact that the Hankel inversion formula holds as long as $\nu \geq-1/2$.\vspace{0.2cm}

In the end, we proved the relativistic Hankel transform.

\begin{proposition}\label{pro:hankel}
  Let $\sigma\in(0,1]$ and $\gamma\in[0,2\pi)$ such that
  \begin{equation}\label{bd:sincos}
    \sin \gamma\cos \gamma=0.
  \end{equation}
  Consider the selfadjoint operators $d^{\gamma}_{k}$
  defined in \eqref{eq:d0g}, \eqref{eq:dkg},
  with domains $Y$ for $k\neq0$ and $Y^{\gamma}$ for $k=0$.
  Define the relativistic Hankel transform
  $\mathcal{P}_{k}$ as in \eqref{eq:Pk},
  with $\Psi_{k}$ given by \eqref{Psik+}, \eqref{Psik-},
  \eqref{Psi0}.

  Then $\mathcal{P}_{k}$ can be written in the equivalent forms
  \eqref{pk+}, \eqref{pk-},
  \eqref{pk0}. Moreover,
  \begin{enumerate}
    \item
    $\mathcal{P}_{k}:
     [ L^{2}((0,+\infty),rdr]^{2}\to
      L^{2}(\mathbb{R},|\rho|d\rho)$
    is a bounded bijection.

    \item
    $\mathcal{P}_{k}$ is unitary.
    $\mathcal{P}_{k}^{-1}=\mathcal{P}^{*}_{k}$
    is given by \eqref{ipk+} for $k\ge1$ or
    $k=0$ and $\sin \gamma=0$, and by
    \eqref{ipk-} if $k\le-1$ or $k=0$ and
    $\cos \gamma=0$.

    \item
    We have
    $\mathcal{P}_{k}d_{k}^{\gamma}=\rho\mathcal{P}_{k}$
    on $Y^\gamma$ for all $k\in \mathbb{Z}$.
  \end{enumerate}
\end{proposition}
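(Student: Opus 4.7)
\medskip

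My plan is to reduce each of the three claims to facts that are essentially already established in the preceding subsection, but repackaged into a clean unitary/intertwining statement. Throughout I will identify $L^2(\mathbb{R},|\rho|d\rho)\cong [L^2((0,\infty),rdr)]^2$ via the splitting $\phi\mapsto(\phi_+,\phi_-)$ where $\phi_\pm(\rho)=\phi(\pm\rho)$ for $\rho>0$.

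\textbf{Step 1: Equivalent forms.} To obtain \eqref{pk+}, \eqref{pk-}, \eqref{pk0} from the abstract definition \eqref{eq:Pk}, I simply substitute the explicit eigenfunctions \eqref{Psik+}, \eqref{Psik-}, \eqref{Psi0} and recognize each component of the pairing $\int_0^\infty\Psi_k(\rho r)^T\cdot f(r)\,rdr$ as a Hankel transform \eqref{Hankel}. The case distinction $\rho>0$ vs.\ $\rho<0$ comes from the sign flip in the negative-energy eigenfunctions via \eqref{negen}, which matches the sign conventions in \eqref{Psik+}--\eqref{Psi0}. This is purely algebraic.

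\textbf{Step 2: Bounded bijection and unitarity.} The key input is the classical fact that the Hankel transform $\mathcal{H}_\nu$ is unitary on $L^2((0,\infty),rdr)$ whenever $\nu\geq-1/2$, and self-inverse ($\mathcal{H}_\nu^2=I$), as recorded in \eqref{eq:Hk}. For $k\geq1$ with $\nu=\tfrac{k}{\sigma}+\tfrac12\ge\tfrac32$ (and analogously for $k\leq-1$), I would factor
\[
\mathcal{P}_k=\frac{1}{\sqrt{2}}\begin{pmatrix}1&1\\1&-1\end{pmatrix}\begin{pmatrix}\mathcal{H}_{\nu}&0\\0&\mathcal{H}_{\nu-1}\end{pmatrix},
\]
noting that the first factor is the Hadamard matrix (unitary on $\mathbb{C}^2$ and self-inverse) and the second factor is a block-diagonal unitary on $[L^2((0,\infty),rdr)]^2$. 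The composition is then unitary, and applying the factorization in reverse yields precisely formula \eqref{ipk+}, verifying $\mathcal{P}_k^{-1}=\mathcal{P}_k^*$. The case $k\le-1$ is analogous, producing \eqref{ipk-}. For $k=0$ the assumption $\sin\gamma\cos\gamma=0$ collapses \eqref{pk0} into either the $k\geq1$ pattern (with $\nu=\tfrac12$) when $\sin\gamma=0$, or the $k\leq-1$ pattern (with $\nu=-\tfrac12$) when $\cos\gamma=0$. Since $|\nu|=\tfrac12\ge-\tfrac12$, the Hankel inversion formula still applies, so the same factorization gives the invertibility and unitarity.

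\textbf{Step 3: Intertwining $\mathcal{P}_k d_k^\gamma=\rho\mathcal{P}_k$.} For $k\ne0$ and any $(\phi,\psi)^T\in Y$, I would rewrite $d_k=\bigl(\begin{smallmatrix}0&-\partial_{-(\nu-1)}\\ \partial_\nu&0\end{smallmatrix}\bigr)$ (in the notation of \eqref{eq:pa}) and apply the recurrence identities \eqref{eq:Hk} term-by-term to $\mathcal{H}_\nu(\partial_\nu\phi)=r\mathcal{H}_{\nu-1}\phi$ etc.; these are precisely the computations already displayed after \eqref{eq:Hk}, and they give $\rho\mathcal{P}_k(\phi,\psi)^T$ directly. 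For $k=0$ the same computation handles the ``regular'' part $Y\subset Y^\gamma$; it remains only to check the identity on the singular generator $v_\gamma=(\cos\gamma\,K_{1/2},\sin\gamma\,K_{1/2})^T$. Using $\mathcal{H}_\nu K_\nu(\rho)=\rho^\nu/(1+\rho^2)$ for $\nu>-1$, I would compute both $\mathcal{P}_0v_\gamma$ and $\mathcal{P}_0d_0^\gamma v_\gamma$ explicitly on the half-lines $\rho>0$ and $\rho<0$, take their difference, and observe—exactly as in the derivation of \eqref{eq:rH0}—that the obstruction is $2\sin\gamma\cos\gamma|\rho|^{-1/2}$ on $\rho<0$. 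Imposing \eqref{bd:sincos} kills this obstruction and yields the intertwining on all of $Y^\gamma$.

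The only delicate step is this last one: the spinor $v_\gamma$ lives just outside the domain of the closure $\overline{d}_0$, so the intertwining relation must be read in the weak/distributional sense dictated by the von Neumann extension \eqref{eq:d0g}, and it is precisely the selection rule $\sin\gamma\cos\gamma=0$—singled out earlier in the paper as the condition under which dispersion holds—that makes $\mathcal{P}_0$ diagonalize $d_0^\gamma$. Once this is verified, the proposition is complete.
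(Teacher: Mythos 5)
Your proof follows essentially the same route as the paper: read off the explicit forms from the eigenfunctions, verify unitarity via the scalar Hankel inversion $\mathcal{H}_\nu^2=I$ and the fact that $\mathcal{H}_\nu$ is unitary for $\nu\ge-\tfrac12$, and check the intertwining on $Y$ by the recurrence identities \eqref{eq:Hk} plus the explicit computation on the singular generator $v_\gamma$ using $\mathcal{H}_\nu K_\nu(\rho)=\rho^\nu/(1+\rho^2)$, observing that the obstruction $2\sin\gamma\cos\gamma\,|\rho|^{-1/2}$ on $\rho<0$ vanishes precisely under \eqref{bd:sincos}. Your Hadamard factorization
\begin{equation*}
  \mathcal{P}_k=\frac{1}{\sqrt{2}}\begin{pmatrix}1&1\\1&-1\end{pmatrix}\begin{pmatrix}\mathcal{H}_{\nu}&0\\0&\mathcal{H}_{\nu\mp1}\end{pmatrix}
\end{equation*}
under the isometric identification $L^2(\R,|\rho|\,d\rho)\cong[L^2((0,\infty),rdr)]^2$ is a cleaner packaging than the paper's direct writedown-and-verify of \eqref{ipk+}, \eqref{ipk-}: both factors are manifestly unitary and self-adjoint, so inversion and $\mathcal{P}_k^{-1}=\mathcal{P}_k^*$ come for free by reversing the composition.

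One concrete point needs correcting. You assert that \eqref{pk0} collapses to the $k\ge1$ pattern with $\nu=\tfrac12$ when $\sin\gamma=0$ and to the $k\le-1$ pattern with $\nu=-\tfrac12$ when $\cos\gamma=0$. Carrying out the collapse gives the opposite: setting $\sin\gamma=0$ in \eqref{pk0} leaves $\cos\gamma\,[\mathcal{H}_{-1/2}f_1\mp\mathcal{H}_{1/2}f_2]$, i.e.\ the \eqref{pk-} pattern with $\nu=-\tfrac12$, while $\cos\gamma=0$ leaves $\sin\gamma\,[\mathcal{H}_{1/2}f_1\pm\mathcal{H}_{-1/2}f_2]$, the \eqref{pk+} pattern with $\nu=\tfrac12$. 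This is also what the paper's text immediately preceding the proposition and the kernel formula \eqref{kernel0} say; Part~(2) of the proposition's statement has the two $k=0$ subcases interchanged, and you have reproduced that misprint rather than derived the assignment from \eqref{pk0}. The unitarity and intertwining conclusions are unaffected by the swap, but the precise identification of $\mathcal{P}_0^{-1}$ with \eqref{ipk+} versus \eqref{ipk-} is not, so you should redo this small step and state the corrected correspondence.
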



\subsection{The Construction of Dirac Propagator}\label{sec:dirapro}
To study the Cauchy problem associated to the Dirac Hamiltonian on the cosmic string, using the relativistic Hankel transform, we will show an explicit representation for the solution of the Dirac equation
 \begin{align}\label{eq:Dirac}
 \begin{cases}
 i\partial_t u=\D_{\sigma,\gamma} u,\qquad u(t,x):\R_t\times X\rightarrow \C^2,\\
 u(0,x)=f(x)=(\phi,\psi)^T\in D(\D_{\sigma,\gamma})\subset(L^2(X))^2.
 \end{cases}
 \end{align}
For any $f\in D(\D_{\sigma,\gamma})$, we recall the form
 \begin{align}
 f=\sum_{k\in\Z}\begin{pmatrix}\phi_k(r)e^{-i\frac{k}{\sigma}\theta}\\
 \psi_k(r)e^{-i\frac{k}{\sigma}\theta}\end{pmatrix}
 \end{align}
with
 \begin{align*}
 \begin{pmatrix}\phi_0(r)\\ \psi_0(r)\end{pmatrix}=c\begin{pmatrix}\cos\gamma\cdot K_{\frac12}(r)\\ \sin\gamma\cdot K_{\frac12}(r)\end{pmatrix}+\begin{pmatrix}\tilde{\phi}_0(r)\\ \tilde{\psi}_0(r)\end{pmatrix}
 \end{align*}
where $\begin{pmatrix}
  \tilde\phi_{0}(r) \\
    \tilde\psi_{0}(r)
\end{pmatrix}, \begin{pmatrix}
  \phi_{k}(r) \\
    \psi_{k}(r)
\end{pmatrix}\in Y$
for $k\neq0$ are regular at the origin.

We shall prove the following:

\begin{proposition}
[Dirac Propagator]\label{prop:Sp} Let $\gamma\in [0,2\pi)$ be such that $\sin\gamma \cos\gamma=0$. Let $x=r(\cos \theta,\sin\theta)$ and $y=s(\cos\omega,\sin\omega)$ and let

\begin{equation}\label{Phi}
\Phi_{k,\sigma}(\theta)=\frac1{\sqrt{2\pi\sigma}}\left(\begin{matrix} e^{-i\frac k\sigma \theta}&0\\0& e^{-i\frac k\sigma \theta}\end{matrix}\right).
\end{equation}
Then the solution $u(t,x)$ to
\eqref{eq:Dirac} satisfies
\begin{equation}\label{eq:ker}
u(t, x)=e^{-it\D_{\sigma,\gamma}}f=\int_{0}^\infty \int_{-\pi}^{\pi}{\bf K}_{\sigma}(t,r,\theta,s,\omega) f(s,\omega)  d\omega \;sds
\end{equation}
where
\begin{equation}\label{kernel-s}
\begin{split}
{\bf K}_{\sigma}(t,r,\theta,s,\omega)=\sum_{k\in\Z} \Phi_{k,\sigma}(\theta) {\bf K}_{k,\sigma}(t,r,s)\overline{\Phi_{k,\sigma}(\omega)}
\end{split}
\end{equation}
and for $k\neq0$
\begin{equation}\label{kernel'}
\begin{split}
{\bf K}_{k,\sigma}(t, r,s)= \int_0^\infty e^{-it\rho}  \Psi_k(r\rho)\overline{\Psi^*_k(s\rho)} \rho d\rho
\end{split}=\begin{pmatrix}m_{|\frac{k}{\sigma}+\frac12|}&0\\ 0&m_{|\frac{k}{\sigma}-\frac12|}\end{pmatrix}.
\end{equation}
In addition for $k=0$
 \begin{align}\label{kernel0}
 {\bf K}_{0}=\begin{pmatrix}m_{-\frac12}&0\\ 0&m_{\frac12}\end{pmatrix}\quad \text{if}\; \sin\gamma=0,\quad \begin{pmatrix}m_{\frac12}&0\\ 0&m_{-\frac12}\end{pmatrix}\quad \text{if}\; \cos\gamma=0,
 \end{align}
where
 \begin{align}
 m_{\nu}(t,r,s)=\int^\infty_0 e^{-it\rho}J_{\nu}(r\rho)J_{\nu}(s\rho)\rho d\rho.
 \end{align}
\end{proposition}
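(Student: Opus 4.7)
The plan is to combine the two ingredients already in place: the angular Fourier decomposition furnished by Theorem \ref{th:sa} and the spectral diagonalization of each radial block by the relativistic Hankel transform of Proposition \ref{pro:hankel}. First I would expand $f\in D(\D_{\sigma,\gamma})$ as $f(r,\theta)=\sum_{k\in\Z}\Phi_{k,\sigma}(\theta)\,f_k(r)$ with $f_k=(\phi_k,\psi_k)^T$, and observe that by Theorem \ref{th:sa} the propagator acts mode by mode, so $e^{-it\D_{\sigma,\gamma}}f=\sum_k\Phi_{k,\sigma}(\theta)\,(e^{-itd_k^\gamma}f_k)(r)$. It therefore suffices to produce the integral kernel in $(r,s)$ of each radial evolution $e^{-itd_k^\gamma}$; multiplying on the right by $\overline{\Phi_{k,\sigma}(\omega)}$ and summing in $k$ then recovers \eqref{kernel-s}.

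For each radial mode, part (3) of Proposition \ref{pro:hankel} gives $\Pk d_k^\gamma=\rho\Pk$ on the appropriate domain, so by the functional calculus $e^{-itd_k^\gamma}=\Pk^{-1}(e^{-it\rho}\cdot)\Pk$. Unpacking this with the integral formulas \eqref{eq:Pk}--\eqref{pk0} for $\Pk$ and \eqref{ipk+}--\eqref{ipk-} for $\Pk^{-1}$, and substituting the generalized eigenfunctions \eqref{Psik+}--\eqref{Psi0}, the composition is read as an oscillatory integral over $\rho$ of $e^{-it\rho}$ times the outer product $\Psi_k(r\rho)\overline{\Psi_k^*(s\rho)}$ built from Bessel pairs $J_{|k/\sigma\pm 1/2|}$. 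Carrying out the matrix multiplication entry by entry and combining the contributions from the two halves $\rho>0,\rho<0$ of the spectrum of $d_k^\gamma$—where the sign convention of \eqref{negen} for negative--energy eigenfunctions is what absorbs the cross Bessel contributions into the two diagonal blocks—one arrives at the structure \eqref{kernel'} with entries $m_{|k/\sigma\pm 1/2|}(t,r,s)=\int_0^\infty e^{-it\rho}J_{|k/\sigma\pm 1/2|}(r\rho)J_{|k/\sigma\pm 1/2|}(s\rho)\rho\,d\rho$. For $k=0$, the assumption $\sin\gamma\cos\gamma=0$, which is precisely the condition \eqref{eq:rH0} under which $\mathcal{P}_0$ intertwines $d_0^\gamma$ with multiplication by $\rho$ on the deficiency element of $Y^\gamma$, collapses \eqref{Psi0} to a single Bessel pair of order $\pm 1/2$, yielding the two alternatives in \eqref{kernel0}.

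The main obstacle will be the rigorous justification of these manipulations, since the generalized eigenfunctions $\Psi_k(\rho r)$ do not lie in $L^2$ and all pointwise kernel identities must be interpreted weakly, i.e.~tested against Schwartz initial data, with the exchanges of sum, integral and operator validated by the unitarity of $\Pk$ (part (2) of Proposition \ref{pro:hankel}) together with dominated convergence. A secondary subtlety, specific to $k=0$, is to verify that $e^{-itd_0^\gamma}$ acts correctly on the non--regular component $c(\cos\gamma\cdot K_{1/2},\sin\gamma\cdot K_{1/2})^T$ of $Y^\gamma$; this is exactly where $\sin\gamma\cos\gamma=0$ becomes indispensable, because the residual term $2\sin\gamma\cos\gamma\,|\rho|^{-1/2}$ computed in the discussion around \eqref{eq:rH0} would otherwise obstruct the diagonal representation \eqref{kernel0} and force a different (non--Hankel) spectral picture.
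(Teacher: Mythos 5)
Your outline follows the paper's proof precisely in its structure: expand $f=\sum_k\Phi_{k,\sigma}f_k$, reduce to the radial Cauchy problems $i\partial_t u_k=d_k^\gamma u_k$, invoke Proposition~\ref{pro:hankel}(3) to write $u_k=\mathcal{P}_k^{-1}e^{-it\rho}\mathcal{P}_kf_k$, and unwind the composition into an integral kernel built from Bessel pairs $J_{|k/\sigma\pm1/2|}$. This is exactly the route taken in the paper.

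However, the mechanism you offer for obtaining the diagonal form \eqref{kernel'} does not withstand scrutiny, and this is precisely the step you should check more carefully. You claim the sign convention \eqref{negen} for negative--energy eigenfunctions makes the cross Bessel contributions cancel when the $\rho>0$ and $\rho<0$ halves are combined. Carrying out the composition explicitly with \eqref{pk+} and \eqref{ipk+} for $k\geq 1$, and writing $g_\pm(\rho)=(\mathcal{P}_kf)(\pm\rho)$ for $\rho>0$, the first component of $\mathcal{P}_k^{-1}e^{-it\rho}\mathcal{P}_kf$ is
\begin{equation*}
\tfrac12\,\mathcal{H}_\nu\!\left[(e^{-it\rho}+e^{it\rho})\,\mathcal{H}_\nu\phi_k+(e^{-it\rho}-e^{it\rho})\,\mathcal{H}_{\nu-1}\psi_k\right]=\mathcal{H}_\nu\!\left[\cos(t\rho)\,\mathcal{H}_\nu\phi_k-i\sin(t\rho)\,\mathcal{H}_{\nu-1}\psi_k\right].
\end{equation*}
The cross term does not vanish; it acquires the \emph{odd} part $-i\sin(t\rho)$ of the multiplier, while the diagonal term gets the \emph{even} part $\cos(t\rho)$. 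Cancellation of the cross Bessel contribution and persistence of $e^{-it\rho}$ on the diagonal would happen only if the multiplier were even in $\rho$, i.e.~$e^{-it|\rho|}$, which is the spectral multiplier for the half--wave propagator $e^{-it\sqrt{L_k}}$, not for $e^{-itd_k^\gamma}$. The paper's own proof, for what it is worth, does not argue via cancellation at all: it simply asserts $\mathcal{P}_k^{-1}e^{-it\rho}\mathcal{P}_k=\operatorname{diag}(\mathcal{H}_\nu e^{-it|\rho|}\mathcal{H}_\nu,\mathcal{H}_{\nu-1}e^{-it|\rho|}\mathcal{H}_{\nu-1})$, tacitly replacing $e^{-it\rho}$ by its even part. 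So while your plan inherits a sloppiness already present in the paper, the specific claim that the sign flip in \eqref{negen} annihilates the off--diagonal $J_\nu J_{\nu-1}$ integrals is the place where, if you execute the computation as described, you will find it does not close; you should either work with $e^{-it|\rho|}$ (i.e.~prove the kernel for $e^{-it\sqrt{L_k}}$) and then relate to the Dirac flow, or acknowledge that the propagator kernel of $e^{-itd_k^\gamma}$ contains both a diagonal $\cos$--part and an off--diagonal $\sin$--part.
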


\begin{proof}
From the decomposition \eqref{L2Domc} we expand the initial data as
$$f(x)=(\phi,\psi)^{T}=\sum_{k\in\Z}\Phi_k(\theta) f_k(r),$$
where $f_k(r)=(\phi_k(r), \psi_k(r))^T$, $\Phi_k(\theta)$ is in \eqref{Phi} and
\begin{equation}\label{eq:fk}
 \phi_k(r)=\int_{-\pi}^{\pi} \phi(x) e^{-i\frac{k}{\sigma}\theta}\, d\theta, \quad \psi_k(r)=\int_{-\pi}^{\pi} \psi(x) e^{-i\frac{k}{\sigma}\theta}\, d\theta.
\end{equation}
hence we get the solution of \eqref{eq:Dirac}
\begin{equation}
u(t, x)=e^{-it\mathcal{D}_{A,\gamma}}f=\sum_{k\in\Z}\Phi_k(\theta) u_k(t,r),\qquad \forall k\in\Z,
\end{equation}
and $u_k(t, r)$ satisfies the ordinary differential equations
\begin{equation}\label{eq:uka}
\begin{cases}
i\partial_t u_k(t, r)=d^{\gamma}_{k} u_k(t,r), \\
u_k|_{t=0}=f_k(r)=(\phi_{k}(r),\psi_{k}(r))^T \in  D(d^{\gamma}_{k}).
\end{cases}
\end{equation}
We recall that  the operator $d^{\gamma}_{k}$ is given by
$$
d^{\gamma}_{k}=
\begin{cases}
d^{\gamma}_{0} \quad {\rm if}\: k=0,\\
\overline{d}_{k}\quad {\rm if}\: k\neq 0
\end{cases}
$$
with $\gamma$ as in the statement.
 Denote
\begin{equation*}
  v_{k}(t,\rho)=\mathcal{P}_{k}u_{k},
\end{equation*}
then, by the diagonalization property in Proposition \ref{pro:hankel} the function $v_{k}$ solves the following equations
\begin{equation*}
  \begin{cases}
    i \partial_{t}v_{k}(t,\rho)=\rho v_{k}(t,\rho)\\
    v_{k}\vert_{t=0}=\mathcal{P}_{k}f_{k}(\rho)
  \end{cases}
\end{equation*}
Thus this gives immediately
\begin{equation*}
  v_{k}(t,\rho)=e^{-it\rho}\mathcal{P}_{k}f_{k}(\rho)
\end{equation*}
and
\begin{equation*}
  u_{k}(t,x)=\mathcal{P}_{k}^{-1}
  e^{-it\rho}\mathcal{P}_{k}f_{k}(\rho).
\end{equation*}
Thus the Dirac propagator $e^{-it\overline{d}_k}$ can be equivalently characterized as kernel $\mathbf{K}_{k,\sigma}(t,r,s)$
of the operator
$\mathcal{P}_{k}^{-1}e^{-it\rho}\mathcal{P}_{k}$.
Recalling the expressions \eqref{pk+} and
\eqref{ipk+} for  $k\ge1$, or \eqref{pk0} for $k=0$,
$\cos\gamma=0$ we obtain
\begin{equation*}
  \mathcal{P}_{k}^{-1}e^{-it \rho}\mathcal{P}_{k}=
  \begin{pmatrix}
    \mathcal{H}_{\nu}e^{-it |\rho|}\mathcal{H}_{\nu} & 0 \\
    0 &  \mathcal{H}_{\nu-1}e^{-it |\rho|}\mathcal{H}_{\nu-1}
  \end{pmatrix},
  \qquad
  \nu=
  \begin{cases}
    \frac{k}{\sigma}+\frac12 &\text{if $ k\ge1 $,}\\
    \frac12 &\text{if $ k=0 $.}
  \end{cases}
\end{equation*}
On the other hand, for $k\le-1$ with \eqref{pk-}, \eqref{ipk-}
or $k=0$, $\sin\gamma=0$ with \eqref{pk0}, we obtain
\begin{equation*}
  \mathcal{P}_{k}^{-1}e^{-it \rho}\mathcal{P}_{k}=
  \begin{pmatrix}
    \mathcal{H}_{\nu}e^{-it |\rho|}\mathcal{H}_{\nu} & 0 \\
    0 &  \mathcal{H}_{\nu+1}e^{-it |\rho|}\mathcal{H}_{\nu+1}
  \end{pmatrix},
  \qquad
  \nu=
  \begin{cases}
    -(\frac{k}{\sigma}+\frac12) &\text{if $ k\le-1 $,}\\
    -\frac12 &\text{if $ k=0 $.}
  \end{cases}
\end{equation*}
Define the scalar operator
$\mathcal{M}_{\nu}$:
\begin{equation*}
  \mathcal{M}_{\nu}\phi(r)=
  \mathcal{H}_{\nu}e^{-it |\rho|}\mathcal{H}_{\nu}\phi(r),
\end{equation*}
hence $\mathcal{M}_\nu$ has the integral form
\begin{equation*}
  \mathcal{M}_{\nu}\phi(r)=
  \int_{0}^{\infty}
  m_{\nu}(t,r,s)\phi(s)sds
\end{equation*}
with the kernel
\begin{equation*}
  m_{\nu}(t,r,s)=
  \int_{0}^{\infty}e^{-it \rho}
  J_{\nu}(r \rho)J_{\nu}(s\rho)\rho d \rho.
\end{equation*}
Therefore we rewrite for $k\neq0$
\begin{equation*}
  \mathcal{P}_{k}^{-1}e^{-it \rho}\mathcal{P}_{k}=
  \begin{pmatrix}
    \mathcal{M}_{|\frac{k}{\sigma}+\frac12|} & 0 \\
    0 &  \mathcal{M}_{|\frac{k}{\sigma}-\frac12|}
  \end{pmatrix}
\end{equation*}
but for $k=0$ we have
\begin{equation*}
  \mathcal{P}_{0}^{-1}e^{-it \rho}\mathcal{P}_{0}=\begin{cases}
  \begin{pmatrix}
    \mathcal{M}_{\frac12} & 0 \\
    0 &  \mathcal{M}_{-\frac12}
  \end{pmatrix}
  \quad\text{if $\cos\gamma=0$,}\\

  \begin{pmatrix}
    \mathcal{M}_{-\frac12} & 0 \\
    0 &  \mathcal{M}_{\frac12}
  \end{pmatrix}
  \quad\text{if $\sin\gamma=0$.}
  \end{cases}
\end{equation*}
Therefore, we obtain \eqref{kernel'} and \eqref{kernel0} and we complete the proof.

\end{proof}
Since the initial condition $f\in D(\D_{d^\gamma_k})$ has the regular part and singular part in the meantime, it is difficult to directly treat the propagator in a standard way (more details see \cite{FZZ,GYZZ}).
At this point, we shall deal with the initial conditions for $k\neq0$ (nonradial part) and $k=0$ (radial part) separately to prove our results.
Using the orthogonal projector $P_{\bot}$ defined at the beginning
of Section \ref{sec:results}, we can spilt $f$ into nonradial and radial two parts: $f=f_{\bot}+f_0$, where
\begin{equation*}
  f_{\bot}=P_{\bot}f=  \sum_{k\in \mathbb{Z}\setminus\{0\}}
  \begin{pmatrix}
    e^{-i\frac{k}{\sigma} \theta}\ \phi_{k}(r) \\
    e^{-i\frac{k}{\sigma}\theta}\ \psi_{k}(r)
  \end{pmatrix}.
\end{equation*}
While the {\em radial part} is more
delicate due to the singularity at 0.
This corresponds to the choice of data
\begin{equation*}
  f_0=P_{0}f=
  \begin{pmatrix}\phi_0(r)\\ \psi_0(r)\end{pmatrix}=c\begin{pmatrix}\cos\gamma\cdot K_{\frac12}(r)\\ \sin\gamma\cdot K_{\frac12}(r)\end{pmatrix}+\begin{pmatrix}\tilde{\phi}_0(r)\\ \tilde{\psi}_0(r)\end{pmatrix}.
\end{equation*}
By the orthogonality, the full flow is the sum of the nonradial and the radial components:
 \begin{equation}\label{eq:flow}
e^{-it\mathcal{D}_{\sigma,\gamma}} f=e^{-it\mathcal{D}_{\sigma,\gamma}}P_0 f+e^{-it\mathcal{D}_{\sigma,\gamma}}P_{\bot} f=e^{-it\mathcal{D}_{\sigma,\gamma}} f_0+e^{-it\mathcal{D}_{\sigma,\gamma}} f_{\bot}.
  \end{equation}

\section{Localized dispersive estimates }
\label{sec:dispest}
In this section, we prove Theorem \ref{th:disp}.
Recall \eqref{eq:square}
\begin{align*}
H=\begin{pmatrix}H^+_\sigma&0\\ 0&H^-_\sigma\end{pmatrix}=\bigoplus_{k\in\Z}L_k\otimes I_2,
 \end{align*}
where
 \begin{align*}
L_k=\begin{pmatrix} H_{\nu_+}&0\\0&H_{\nu_-}\end{pmatrix}.
 \end{align*}
 For the $H_{\nu_\pm}=-\frac{d^2}{dr^2}-\frac1r\frac{d}{dr} +\frac1{r^2}(\frac12\pm\frac{k}{\sigma})^2$ and the Hankel transform $\mathcal{H}_{\nu_\pm}$ given in \eqref{Hankel}, we have
 \begin{align*}
 H_{\nu_\pm}=\mathcal{H}_{\nu_\pm}\rho^2\mathcal{H}_{\nu_\pm}.
 \end{align*}
By Proposition \ref{pro:hankel} on the relativistic Hankel transform, we obtain
 \begin{align}
L_k=\mathcal{P}^{-1}_k\rho^2\mathcal{P}_k, \quad \text{for all} \; k\in\Z, k\neq0,
 \end{align}
 while for $k=0$
 \begin{align}
L_0=\begin{pmatrix} \mathcal{H}_{\frac12}\rho^2\mathcal{H}_{\frac12}&0\\0&\mathcal{H}_{\frac12}\rho^2\mathcal{H}_{\frac12}\end{pmatrix}:=\tilde{\mathcal{P}}^{-1}_0\rho^2\tilde{\mathcal{P}}_0
 \end{align}
 which is different from $\mathcal{P}^{-1}_0\rho^2\mathcal{P}_0$. This difference is because we have no the singular component to treat due the Friedrichs extension of $H$, see Remark \ref{rem:H}.  Hence we obtain
  \begin{align}\label{e-Lk}
 e^{-it\sqrt{L_k}}=\mathcal{P}_k^{-1}e^{-it|\rho|}\mathcal{P}_k,\quad e^{-it\sqrt{L_0}}=\tilde{\mathcal{P}}^{-1}_0e^{-it|\rho|}\tilde{\mathcal{P}}_0.
  \end{align}
Therefore, we have the decomposition of the propagator $e^{-it\sqrt{H}}$
 \begin{align}
 e^{-it\sqrt{H}}=\bigoplus_{k\in\Z} e^{-it\sqrt{L_k}} \otimes I_2
 \end{align}
where $e^{-it\sqrt{L_k}}$ given in \eqref{e-Lk}. It worths to pointing out that $ e^{-it\sqrt{H}}$ equals $e^{-it\D_{\sigma,\gamma}}$ except on the $h_{0}(\mathbb{S}_\sigma^{1})$ given in \eqref{L2Domc}, which will allow us to prove
\eqref{est:Pk}.

Let $P_>$ and $P_<$ be the projections on the harmonics with $k\geq1$ and $k\leq-1$ respectively. In view of this decomposition we can rewrite \eqref{eq:flow} as
 \begin{align}\label{m:proH}
 e^{-it\D_{\sigma,\gamma}}f=e^{-it\sqrt{H}}P_> f+e^{-it\sqrt{H}}P_< f+e^{-it\D_{\sigma,\gamma}}P_0f.
 \end{align}
Therefore, we also have for all $\varphi\in C^\infty_0(\R)$
 \begin{align}\label{m:proH'}
 \varphi(2^{-j}|\D_{\sigma,\gamma}|)e^{-it\D_{\sigma,\gamma}}P_{\bot}f= \varphi(2^{-j}\sqrt{H})e^{-it\sqrt{H}}P_{\bot}f,\quad j\in\Z.
 \end{align}
In the following discussion, we split the propagator into two parts to prove Theorem \ref{th:disp}.
\subsection{The proof of dispersive estimate \eqref{est:Pk}}
In this subsection, we prove \eqref{est:Pk}. In view of \eqref{m:proH'}, to prove \eqref{est:Pk},
we  study the half-wave operator associated with the operator $H$
$$\varphi(2^{-j}\sqrt{H})e^{-it\sqrt{H}}.$$
From the diagonally formula \eqref{eq:square}, it suffices to prove
\begin{equation}\label{est:Pk'}
\begin{split}
 \Big\|\varphi(2^{-j}\sqrt{H_{\sigma}^\pm})&e^{-it\sqrt{H_{\sigma}^\pm}}P_{\bot}f(x)\Big\|_{L^\infty(X)}
 \\& \leq C2^{2j}(1+2^{j}|t|)^{-\frac12}\|\tilde{\varphi}(2^{-j}\sqrt{H_{\sigma}^\pm})P_{\bot}f\|_{L^1(X)},
 \end{split}
\end{equation}
where the spinor operator $H$ is replaced by the scalar one  $H^\pm_\sigma=-\partial_r^2-\frac 1r\partial_r+\frac1{r^2}(\frac12\pm i\partial_\theta)^2$ on the flat cone
$X=(0,+\infty)\times\mathbb{S}^1_{\sigma}$.
Illustrated by \cite{BFM, Ford, Zhang1}, it will be easier to construct the Schr\"odinger kernel than the half-wave propagator associated with the operator $H_\sigma^\pm$.

Fortunately, the half-wave propagator can be linked with the Schr\"odinger kernel using the following subordination formula.
The following proposition about the subordination formula are modified from \cite[Proposition 4.1]{MS} and \cite[Proposition 2.2]{DPR}, and we use the one formulated in \cite{WZZ2}.

 \begin{proposition}\label{prop:sub}
 Let $\varphi\in C^\infty_c$ is supported in $[\frac12,2]$. There exist a symbol $\chi\in C^\infty(\R\times\R)$ with  supp$\chi(\cdot,\tau)\subseteq[\frac1{16},4]$ satisfying
  \begin{align}
  \sup_{\tau\in\R}\big|\partial_s^\alpha\partial_\tau^\beta \chi(s,\tau)\big|\lesssim_{\alpha,\beta}(1+|s|)^{-\alpha},\quad \forall \alpha,\beta\geq0.
  \end{align}
 and $\beta(s,\tau)$ is a Schwartz class function such that
  \begin{equation}
  \begin{aligned}
  \varphi(2^{-j}\sqrt{x})e^{it\sqrt{x}}=&\varphi(2^{-j}\sqrt{x})\beta( \frac{tx}{2^j},2^jt)\\
  &+\varphi(2^{-j}\sqrt{x})(2^jt)^{\frac12} \int^\infty_0\chi(s,2^jt)e^{\frac{i2^jt}{4s}}e^{i2^{-j}tsx}ds.
  \end{aligned}
  \end{equation}
 \end{proposition}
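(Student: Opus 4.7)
The plan is to reduce the identity to a normalized form by rescaling, then construct $\chi$ by an inverse stationary phase argument combined with Borel summation, following \cite{MS,DPR,WZZ2}. Setting $\eta=2^{-j}\sqrt{x}$ and $\tau=2^jt$ yields $t\sqrt{x}=\tau\eta$, $tx/2^j=\tau\eta^2$ and $2^{-j}tsx=\tau s\eta^2$, so the proposition is equivalent to producing $\beta$ Schwartz and $\chi$ with the stated symbol bounds such that
\begin{equation*}
\varphi(\eta)\,e^{i\tau\eta}=\varphi(\eta)\,\beta(\tau\eta^2,\tau)+\varphi(\eta)\,\tau^{1/2}\!\int_0^\infty\!\chi(s,\tau)\,e^{i\tau\Phi(s;\eta)}\,ds,\qquad\Phi(s;\eta):=s\eta^2+\tfrac{1}{4s}.
\end{equation*}
On the support of $\varphi$ (contained in $[\tfrac12,2]$), the phase $\Phi(\cdot;\eta)$ has a unique non-degenerate critical point $s_0(\eta)=1/(2\eta)\in[\tfrac14,1]$ with $\Phi(s_0)=\eta$ and $\Phi''(s_0)=4\eta^3>0$, perfectly compatible with the prescribed support $[\tfrac{1}{16},4]$ for $\chi(\cdot,\tau)$.

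First I would introduce a cutoff $\zeta\in C_c^\infty(\R)$ with $\zeta\equiv1$ on $[-1,1]$ and split $\varphi(\eta)e^{i\tau\eta}$ as $\zeta(\tau)\varphi(\eta)e^{i\tau\eta}+(1-\zeta(\tau))\varphi(\eta)e^{i\tau\eta}$. The low-frequency piece is smooth in $(\eta,\tau)$ and compactly supported in $\tau$; since $\eta^2$ is bounded away from $0$ and $\infty$ on the support of $\varphi$, the map $(\eta,\tau)\mapsto(\tau\eta^2,\tau)$ is a diffeomorphism onto a conic region, and after multiplying by a compactly supported cutoff in $(u,\tau)$ containing this region, one obtains a compactly supported (hence Schwartz) $\beta(u,\tau)$ realizing $\zeta(\tau)\varphi(\eta)e^{i\tau\eta}=\varphi(\eta)\beta(\tau\eta^2,\tau)$.

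For the high-frequency piece $(1-\zeta(\tau))\varphi(\eta)e^{i\tau\eta}$, I would seek $\chi$ as a formal symbol $\chi(s,\tau)\sim\sum_{k\ge0}\tau^{-k}a_k(s)$ with each $a_k\in C_c^\infty([\tfrac{1}{16},4])$. The leading-order stationary phase expansion at $s_0$ reads
\begin{equation*}
\tau^{1/2}\!\int_0^\infty a_0(s)\,e^{i\tau\Phi(s;\eta)}ds=\sqrt{2\pi/\Phi''(s_0)}\,e^{i\pi/4}\,a_0(s_0(\eta))\,e^{i\tau\eta}+O(\tau^{-1}),
\end{equation*}
so taking $a_0(s)=(4\pi s^3)^{-1/2}e^{-i\pi/4}$ on $[\tfrac14,1]$, smoothly extended to $[\tfrac{1}{16},4]$, reproduces $e^{i\tau\eta}$ up to $O(\tau^{-1})$. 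The higher $a_k$ are determined inductively to cancel the successive remainders of the stationary phase expansion, and a standard Borel summation then yields an honest $\chi\in C^\infty(\R\times\R)$ with $\chi(\cdot,\tau)$ supported in $[\tfrac{1}{16},4]$, satisfying the required symbol bound (the $(1+|s|)^{-\alpha}$ decay is automatic from the compact $s$-support) and producing a residual $O(\tau^{-N})$ for every $N$ uniformly in $\eta$ in the support of $\varphi$. Since $\Phi$ and all $a_k$ depend on $\eta$ through $\eta^2$ only, this residual can be absorbed into $\beta(\tau\eta^2,\tau)$.

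The hard part will be the joint control, during the Borel summation, of $C^\infty$-smoothness in $(s,\tau)$, of the prescribed $s$-support, and of the Schwartz decay in both arguments of $\beta$ of the resulting remainder. This is handled by the standard choice of decreasing Borel scale parameters and by exploiting the compactness of the support of $\varphi$, essentially as in \cite[Proposition~4.1]{MS} and \cite[Proposition~2.2]{DPR}.
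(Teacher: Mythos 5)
The paper does not prove this proposition: it is imported as a black box (``modified from \cite[Proposition 4.1]{MS} and \cite[Proposition 2.2]{DPR}, and we use the one formulated in \cite{WZZ2}''), so there is no in-paper proof to compare against. Your reconstruction follows the canonical route behind such subordination formulas — rescale to $(\eta,\tau)=(2^{-j}\sqrt x,\,2^j t)$, recognize the phase $\Phi(s;\eta)=s\eta^2+\frac1{4s}$ with critical point $s_0=1/(2\eta)$, $\Phi(s_0)=\eta$, $\Phi''(s_0)=4\eta^3$, pick $a_0(s)=(4\pi s^3)^{-1/2}e^{-i\pi/4}$ so that stationary phase reproduces $e^{i\tau\eta}$ at leading order, and Borel-sum the correction terms into $\chi$. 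All of these computations are correct, and this is indeed the structure of the arguments in the cited references.

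There is, however, a concrete gap in the step where you absorb the low-frequency piece (and, implicitly, the Borel remainder) into $\beta$. You assert that $\zeta(\tau)\varphi(\eta)e^{i\tau\eta}$ can be realized as $\varphi(\eta)\beta_1(\tau\eta^2,\tau)$ with $\beta_1$ compactly supported, because ``$(\eta,\tau)\mapsto(\tau\eta^2,\tau)$ is a diffeomorphism onto a conic region''. That map is \emph{not} a diffeomorphism near $\{\tau=0\}$: its Jacobian degenerates there, and on the image cone $\{\tau/4\le u\le 4\tau\}$ the only candidate is $\beta_1(u,\tau)=e^{i\sqrt{u\tau}}$, which is not even $C^2$ at the origin. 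Indeed along $u=c\tau$ one has
\begin{equation*}
\partial_u^2\, e^{i\sqrt{u\tau}}\big|_{u=c\tau}
= -\,\frac{i}{4\,c^{3/2}\,\tau}\,e^{i\sqrt c\,\tau}-\frac{1}{4c}\,e^{i\sqrt c\,\tau},
\end{equation*}
which blows up as $\tau\to0$. Hence no smooth (and a fortiori no Schwartz) $\beta_1$ satisfies your claimed factorization, and a compact-support cutoff does not repair it. The same issue recurs for the Borel-summed remainder: Borel summation controls it as $O(\tau^{-N})$ for $\tau$ large, but does not show it factors through $(\tau\eta^2,\tau)$ smoothly down to $\tau=0$. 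In the paper's only use of the proposition (Case~2 of the proof of \eqref{est:Pk'}, where $2^j t\ge 1$) the regime $|\tau|\lesssim1$ never enters, so the application is unaffected; but a complete proof of the proposition as stated must either restrict to $|\tau|\gtrsim1$, or modify the first slot of $\beta$, or explain explicitly how the small-$\tau$ degeneracy of the change of variables is resolved.
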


 With this in hand, then by the spectral theory for the non-negative self-adjoint operator $H_{\sigma}^\pm$, we can have the representation of the microlocalized half-wave propagator
\begin{equation}\label{key-operator}
\begin{split}
&\varphi(2^{-j}\sqrt{H_{\sigma}^\pm})e^{it\sqrt{H_{\sigma}^\pm}}\\&=\rho\big(\frac{tH_{\sigma}^\pm}{2^j}, 2^jt\big)
+\varphi(2^{-j}\sqrt{H_{\sigma}^\pm})\big(2^jt\big)^{\frac12}\int_0^\infty \chi(s,2^jt)e^{\frac{i2^jt}{4s}}e^{i2^{-j}tsH_{\sigma}^\pm}\,ds.
\end{split}
\end{equation}

To prove \eqref{est:Pk'}, we need three propositions. This first one is dispersive estimates for the Schr\"odinger propagator and the other two are about the Littlewood-Paley theory associated with the operator $H_{\sigma}^\pm$.
\begin{proposition}[Dispersive estimate]\label{prop:DS}
The Schr\"odinger propagator $e^{-itH_{\sigma}^\pm}$ satisfies
 \begin{align}\label{dis:KS}
 \|e^{-itH_{\sigma}^\pm}f\|_{L^\infty(X)}\lesssim |t|^{-1}\|f\|_{L^1(X)}.
 \end{align}
\end{proposition}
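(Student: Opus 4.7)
The plan is to derive an explicit formula for the Schrödinger kernel by exploiting the angular decomposition and the scalar Hankel transform built in Section \ref{sec:hantra}, and then to reduce the $L^1\to L^\infty$ bound to a uniform estimate on an angular Bessel-function sum. Writing $f(s,\omega)=\sum_{k\in\mathbb{Z}}(2\pi\sigma)^{-1/2}e^{-ik\omega/\sigma}f_k(s)$, the operator $H_\sigma^{\pm}$ becomes on each angular mode the radial operator $-\partial_r^2-r^{-1}\partial_r+\nu_k^2/r^2$ with $\nu_k=|\tfrac12\pm k/\sigma|$, which is diagonalized by the scalar Hankel transform $\mathcal{H}_{\nu_k}$. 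Applying Weber's identity
\begin{equation*}
\int_0^\infty e^{-it\rho^2}J_\nu(r\rho)J_\nu(s\rho)\rho\,d\rho=\frac{1}{2it}e^{i(r^2+s^2)/(4t)}I_\nu\!\left(\frac{rs}{2it}\right),
\end{equation*}
one obtains the Schwartz kernel
\begin{equation*}
K^{\pm}(t,r,\theta,s,\omega)=\frac{e^{i(r^2+s^2)/(4t)}}{4\pi i\sigma\, t}\sum_{k\in\mathbb{Z}}e^{-ik(\theta-\omega)/\sigma}I_{\nu_k}\!\left(\frac{rs}{2it}\right),
\end{equation*}
so that, since the prefactor already carries the $|t|^{-1}$ decay, the problem is reduced to a uniform bound on the angular Bessel sum for the purely imaginary argument $z=rs/(2it)$.

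To bound the angular sum I would follow the contour/Poisson-summation technique used by Cheeger-Taylor and revisited on flat cones in \cite{BFM,Ford,Zhang1}. Inserting the Sommerfeld-Schl\"afli representation
\begin{equation*}
I_\nu(z)=\frac{1}{\pi}\int_0^{\pi}e^{z\cos\psi}\cos(\nu\psi)\,d\psi-\frac{\sin(\nu\pi)}{\pi}\int_0^{\infty}e^{-z\cosh\tau-\nu\tau}\,d\tau,
\end{equation*}
interchanging sum and integral, and summing the trigonometric series in closed form via Poisson summation, one obtains a decomposition of the kernel into (a) geometric terms supported on the finitely many geodesic reflections across the tip and (b) a diffractive tail. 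Since $z\in i\mathbb{R}$, one may equivalently work with $J_{\nu_k}(rs/(2|t|))$, and the uniform bound $|J_\nu(y)|\le 1$ for $\nu\ge 0$ together with the rapid decay $|J_\nu(y)|\lesssim C^{-\nu}$ when $\nu\gg y$ yields absolute convergence of the sum by splitting into the low regime $|k|\lesssim |rs/t|$ and the high regime $|k|\gg |rs/t|$. The geometric terms are bounded by the flat-space Schr\"odinger kernel after unfolding, while the diffractive integral is controlled by its explicit decay in $\tau$ and in $\nu$.

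The main obstacle, and the point where the analysis departs from the scalar flat-cone case, is the half-integer shift $\pm 1/2$ in the Bessel orders $\nu_k$: after Poisson summation this shift produces an extra oscillatory factor $e^{\mp i\psi/2}$, which implements precisely the spinor (antiperiodic) boundary condition induced by the Dirac square and is also what forces the separate treatment of $H_\sigma^{+}$ and $H_\sigma^{-}$. The mechanism is analogous to the role of the magnetic flux in the Aharonov-Bohm-Dirac analysis of \cite{CDYZh}, so I plan to adapt the algebraic computations there, replacing the AB-shift by the curvature-induced shift $1/2$. Once the angular sum is shown to be uniformly bounded, the pointwise estimate $|K^{\pm}(t,\cdot,\cdot,\cdot,\cdot)|\lesssim |t|^{-1}$ follows from the prefactor, and the dispersive estimate \eqref{dis:KS} is a direct consequence of Young's inequality.
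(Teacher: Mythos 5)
Your proposal follows essentially the same route as the paper's proof: compute the kernel via Weber's identity to get $\frac{e^{-\frac{r^2+s^2}{4it}}}{2it}I_{\nu_\pm}\!\big(\frac{rs}{2it}\big)$, insert the Sommerfeld--Schl\"afli representation of $I_\nu$, apply Poisson summation to the compact $[0,\pi]$ integral to unfold to the finitely many geodesic images, and sum the diffractive tail in closed form, with the $\pm\tfrac12$ spinor shift producing the $e^{\mp i\tau/2}$ twist factors that distinguish $H_\sigma^\pm$. One caution: the control of the diffractive tail is not a matter of "explicit decay in $\tau$ and $\nu$" alone — after the geometric-series summation the denominator $\cosh\frac{\tau}{\sigma}-\cos\frac{\theta-\omega\pm\pi}{\sigma}$ degenerates as $\tau\to0$ simultaneously with $\sin\big(\tfrac{\theta-\omega\pm\pi}{2\sigma}\big)\to0$, and the paper has to verify a matching cancellation in the numerator to obtain a bound uniform in $\theta,\omega$; your outline glosses over this step.
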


\begin{proposition}\label{prop:LP0}
Let $\varphi\in C_{c}^{\infty}(\mathbb{R})$ be supported in $[\frac12,2]$. Then for all $f\in L^q(X)$ and $j\in\Z$ there have
 \begin{align}\label{est:bern}
 \|\varphi(2^{-j}\sqrt{H^\pm_\sigma})f\|_{L^p(X)}\lesssim 2^{2j(\frac1q-\frac1p)}\|f\|_{L^q(X)},
 \end{align}
 provided $1\leq q< p\leq\infty$ or $1< q\leq p<\infty$.
\end{proposition}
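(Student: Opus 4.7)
The plan is to factor the spectral cutoff through the heat semigroup and reduce the inequality to (i) an on–diagonal heat kernel bound for $H^\pm_\sigma$ and (ii) uniform $L^p$–boundedness of a smooth compactly supported spectral multiplier. Since $\varphi\in C_c^\infty([\tfrac12,2])$, the function $\tilde{\varphi}(s):=e^{s^2}\varphi(s)$ again belongs to $C_c^\infty([\tfrac12,2])$, so by the functional calculus
\begin{equation*}
\varphi(2^{-j}\sqrt{H^\pm_\sigma}) \;=\; e^{-\tfrac12 2^{-2j}H^\pm_\sigma}\,\tilde{\varphi}(2^{-j}\sqrt{H^\pm_\sigma})\,e^{-\tfrac12 2^{-2j}H^\pm_\sigma}.
\end{equation*}
Proving the desired estimate then reduces to controlling each factor separately.

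The first ingredient is the heat kernel bound
\begin{equation*}
\|e^{-tH^\pm_\sigma}\|_{L^1(X)\to L^\infty(X)} \;\lesssim\; t^{-1},\qquad t>0.
\end{equation*}
By the Hankel-type decomposition of Section \ref{sec:hantra}, the kernel of $e^{-tH^\pm_\sigma}$ splits over angular modes into scalar integrals of the form $\mathcal{H}_{\nu_{\pm}} e^{-t\rho^2}\mathcal{H}_{\nu_{\pm}}$, which admit explicit Bessel representations and satisfy Gaussian-type upper bounds on the cone (as in \cite{Zhang1, Ford}). Combined with the trivial $L^2$–contraction $\|e^{-tH^\pm_\sigma}\|_{L^2\to L^2}\le 1$, Riesz–Thorin interpolation yields
\begin{equation*}
\|e^{-tH^\pm_\sigma}\|_{L^q\to L^p}\;\lesssim\; t^{-(1/q-1/p)},\qquad 1\le q\le p\le\infty.
\end{equation*}
The second ingredient is the uniform $L^p$–boundedness of $\tilde{\varphi}(2^{-j}\sqrt{H^\pm_\sigma})$ for $1<p<\infty$, independent of $j\in\mathbb{Z}$. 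Given the Gaussian heat kernel bound just obtained and the doubling volume growth on the cone, this is a standard consequence of the spectral multiplier theorems of Hebisch and Duong–Ouhabaz–Sikora, since $\tilde{\varphi}$ is a Schwartz function with compact support.

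Combining the two ingredients, for the strict case $1\le q<p\le\infty$ we choose the intermediate exponent $r=2$ in the factorization: the heat factor $e^{-\tfrac12 2^{-2j}H^\pm_\sigma}$ maps $L^q\to L^2$ with norm $\lesssim 2^{2j(1/q-1/2)}$ and $L^2\to L^p$ with norm $\lesssim 2^{2j(1/2-1/p)}$, while the middle piece $\tilde{\varphi}(2^{-j}\sqrt{H^\pm_\sigma})$ is bounded on $L^2$ by the spectral theorem. Multiplying gives the required $2^{2j(1/q-1/p)}$. For the diagonal case $1<q=p<\infty$, the claim reduces directly to the $L^p$–boundedness from the second ingredient, this time applied with $\tilde{\varphi}$ replaced by $\varphi$.

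The main obstacle is the Gaussian heat kernel bound in the first step: the operator $H^\pm_\sigma$ contains the inverse–square angular term $\frac{1}{r^2}(\tfrac12\pm i\partial_\theta)^2$, and the cone $X$ has a conical singularity at the origin, so classical Gaussian heat kernel bounds must be revisited in this setting. The explicit Hankel representation developed in Section \ref{sec:hantra} reduces this to the analysis of Bessel-type integrals, which is tractable though technically delicate.
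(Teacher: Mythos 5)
Your proposal follows essentially the same strategy as the paper: both reduce the Bernstein inequality to the Gaussian upper bound for the heat kernel of $H^\pm_\sigma$ (the paper's \eqref{est:heat}), which is then established through the Bessel/Hankel representation of the kernel in the spirit of Proposition \ref{prop:DS}; the paper then simply cites \cite{A, CDYZh, WZZ2} for the passage from Gaussian bounds to Bernstein, whereas you unpack that passage via the factorization $\varphi(2^{-j}\sqrt{H}) = e^{-\tfrac12 2^{-2j}H}\,\tilde\varphi(2^{-j}\sqrt{H})\,e^{-\tfrac12 2^{-2j}H}$, interpolation, and a spectral multiplier theorem. One small imprecision in your interpolation step: Riesz--Thorin between $L^1\to L^\infty$ (norm $t^{-1}$) and $L^2\to L^2$ (norm $1$) only produces the diagonal $L^q\to L^{q'}$; to fill out the full triangle $1\le q\le p\le\infty$ you also need uniform bounds $\|e^{-tH^\pm_\sigma}\|_{L^1\to L^1},\|e^{-tH^\pm_\sigma}\|_{L^\infty\to L^\infty}\lesssim 1$, which are not automatic here (the operator is not obviously sub-Markovian due to the term $\tfrac{1}{r^2}(\tfrac12\pm i\partial_\theta)^2$) but do follow, via Schur's test and the quadratic volume growth of the cone, from the same Gaussian bound you flag as the key ingredient. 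Since you already identify the Gaussian heat kernel estimate as the crux and the route to it (the Hankel/Bessel representation), the remaining work to make your argument rigorous coincides with what the paper actually carries out.
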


\begin{proposition}\label{prop:LP}
  Let $\sum_{j\in \mathbb{Z}}\varphi(2^{-j}x)=1$ be a standard dyadic
  decomposition of unity. Then for any $1<p<\infty$, we have
 \begin{align}\label{est:LP}
 c_p\|f\|_{L^p(X)}\leq\Big\|\Big(\sum_{j\in\Z} |\varphi(2^{-j}\sqrt{H^\pm_\sigma})f|^2\Big)^{\frac12}\Big\|_{L^p(X)}\leq C_p\|f\|_{L^p(X)}
 \end{align}
where $c_p$ and $C_p$ are constants depending on $p$.
\end{proposition}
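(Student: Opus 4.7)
The plan is to derive both inequalities in \eqref{est:LP} from a general spectral multiplier theorem, once a Gaussian upper bound for the heat kernel of $H^\pm_\sigma$ has been established.

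First, I would establish the Gaussian heat kernel bound
\begin{equation*}
\bigl|e^{-tH^\pm_\sigma}(x,y)\bigr| \leq C\, t^{-1} \exp\Bigl(-\frac{c|x-y|^2}{t}\Bigr), \qquad t>0,\ x,y\in X.
\end{equation*}
By the decomposition \eqref{equ:D2=H} and Proposition \ref{pro:hankel}, the kernel can be expanded as a series over $k\in\Z$ of Bessel integrals $\int_0^\infty J_\nu(r\rho)J_\nu(s\rho)e^{-t\rho^2}\rho\,d\rho$ with $\nu=|\tfrac12\pm\tfrac{k}{\sigma}|$, explicitly evaluable via the Weber--Schafheitlin / Macdonald formula. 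Summing in $k$ along the lines of the scalar flat-cone analysis in \cite{BFM, Ford, Zhang1} yields the desired Gaussian upper bound; note also that $(X,|\cdot|,dx)$ is a doubling metric measure space of homogeneous type with dimension $2$.

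Second, the Gaussian bound combined with the doubling property places $H^\pm_\sigma$ in the framework of Duong--Ouhabaz--Sikora, and the corresponding H\"ormander-type spectral multiplier theorem gives that any bounded Borel $F$ with $\sup_{t>0}\|\eta\,F(t\cdot)\|_{H^s}<\infty$ for some $s>1$ and a fixed cut-off $\eta\in C_c^\infty(0,\infty)$ defines an $L^p(X)\to L^p(X)$ bounded operator $F(\sqrt{H^\pm_\sigma})$ for every $1<p<\infty$. Applied to the randomized symbols
\begin{equation*}
F_\epsilon(\lambda)=\sum_{j\in\Z}\epsilon_j\,\varphi(2^{-j}\lambda),\qquad \epsilon=(\epsilon_j)\in\{\pm1\}^{\Z},
\end{equation*}
whose H\"ormander norms are uniformly bounded in $\epsilon$ thanks to the disjoint-support dyadic structure of $\varphi(2^{-j}\cdot)$, this yields
\begin{equation*}
\Bigl\|\sum_{j\in\Z}\epsilon_j\varphi(2^{-j}\sqrt{H^\pm_\sigma})f\Bigr\|_{L^p(X)} \leq C_p\|f\|_{L^p(X)}
\end{equation*}
with $C_p$ independent of $\epsilon$.

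Third, averaging the $p$-th power of the last inequality over independent Rademacher choices of $\epsilon_j$ and invoking Khintchine's inequality
\begin{equation*}
\Bigl(\sum_{j}|a_j|^2\Bigr)^{1/2}\sim_p \Bigl(\mathbb{E}\bigl|\sum_j \epsilon_j a_j\bigr|^p\Bigr)^{1/p}
\end{equation*}
produces the upper bound in \eqref{est:LP}. The matching lower bound follows by a standard duality argument: writing $\|f\|_{L^p}=\sup_{\|g\|_{L^{p'}}\le1}|\langle f,g\rangle|$, inserting the resolution of identity $\mathrm{Id}=\sum_j\varphi(2^{-j}\sqrt{H^\pm_\sigma})\tilde{\varphi}(2^{-j}\sqrt{H^\pm_\sigma})$, applying Cauchy--Schwarz in $j$, and using the upper bound of \eqref{est:LP} already proved at the conjugate exponent $p'$.

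The main obstacle is the first step: although the kernel of $e^{-tH^\pm_\sigma}$ has an explicit Bessel-series representation, extracting a genuine Gaussian off-diagonal decay uniformly as $x$ or $y$ approach the apex requires careful control of the Bessel sum and of the behaviour of $J_\nu(r\rho)J_\nu(s\rho)$ for large $\nu=|\tfrac{k}{\sigma}\pm\tfrac12|$; this is precisely the delicate analysis that distinguishes the cone setting from the Euclidean one. Once the Gaussian bound is in hand, the spectral-multiplier/Khintchine machinery delivers \eqref{est:LP} essentially mechanically.
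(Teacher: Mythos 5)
Your proposal is correct and follows essentially the same route as the paper: both reduce to a Gaussian upper bound for the heat kernel of $H^\pm_\sigma$ (proved via the Bessel-series representation and Poisson summation) and then invoke the standard spectral-multiplier plus Khintchine/duality machinery, which the paper delegates to the citations \cite{BFHM,Zhang1,ZZ} while you spell it out. The one small correction is that the Gaussian bound must be stated with the cone geodesic distance $d_g(x,y)=\sqrt{r^2+s^2-2rs\cos d_h(\theta,\omega)}$ rather than with $|x-y|$, since $(X,d_g,\mu)$ is the doubling metric-measure space to which the Duong--Ouhabaz--Sikora theorem applies.
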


We now prove \eqref{est:Pk'}. To light the notation, without confusion, we briefly write $H$ as $H_{\sigma}^\pm$ in the rest of this subsection.
We estimate  the microlocalized half-wave propagator $$ \big\|\varphi(2^{-j}\sqrt{H})e^{it\sqrt{H}}P_{\bot} f\big\|_{L^\infty(X)}$$
by considering two cases that: $|t|2^j\geq 1$ and $|t|2^{j}\leq 1$. In the following argument, as before, we can choose $\tilde{\varphi}\in C_c^\infty((0,+\infty))$ such that $\tilde{\varphi}(\lambda)=1$ if $\lambda\in\mathrm{supp}\,\varphi$
and $\tilde{\varphi}\varphi=\varphi$. Since $\tilde{\varphi}$ has the same property of $\varphi$, without confusion, we drop off the tilde above $\varphi$ for brief. Without loss of generality, in the following argument, we assume $t>0$.
\vspace{0.1cm}

\underline{{\bf Case 1: $2^jt\leq 1$.}}
From the Bernstein inequality \eqref{est:bern} and the fact $e^{-it\sqrt{H}}$ is bounded on $L^2$, we have
 \begin{equation}
 \begin{aligned}
 \|&\varphi(2^{-j}\sqrt{H})e^{it\sqrt{H}}P_{\bot} f\|_{L^\infty(X)}\\
 &\lesssim 2^{j}\|e^{-it\sqrt{H}}\varphi(2^{-j}\sqrt{H})P_{\bot}f\|_{L^\infty(X)}\\
 &\lesssim 2^{j}\|\varphi(2^{-j}\sqrt{H})P_{\bot}f\|_{L^\infty(X)}
 \lesssim 2^{j}\|P_{\bot}f\|_{L^\infty(X)}.
 \end{aligned}
 \end{equation}
Since $0<2^j t\leq1$, one has
 \begin{equation}\label{est:hw1}
 \begin{aligned}
 \|&\varphi(2^{-j}\sqrt{H})e^{it\sqrt{H}}P_{\bot} f\|_{L^\infty(X)}\lesssim 2^{2j}(1+2^jt)^{-N}\|\varphi(2^{-j}P_{\bot}f\|_{L^1(X)}.
 \end{aligned}
 \end{equation}
\underline{{\bf Case 2: $2^jt\geq 1$.}}
From \eqref{key-operator}, by using the fact that $\beta\in\mathcal{S}(\R\times\R)$ and the Bernstein inequality again, we have
 \begin{equation}
 \begin{aligned}
 &\|\varphi(2^{-j}\sqrt{H})\beta(\frac{tH}{2^j},2^jt)P_{\bot}f\|_{L^\infty(X)}\\
 &\lesssim2^j(1+2^jt)^{-N}\|\varphi(2^{-j}\sqrt{H}) P_{\bot}f\|_{L^2(X)}\\&
 \lesssim2^{2j}(1+2^jt)^{-N}\|\varphi(2^{-j}\sqrt{H}) P_{\bot}f\|_{L^1(X)}.
 \end{aligned}
 \end{equation}
 For the other part, by \eqref{dis:KS}, we obtain
 \begin{equation}\label{est:hw2}
 \begin{aligned}
 \|&\varphi(2^{-j}\sqrt{H})(2^jt)^{\frac12} \int^\infty_0\chi(s,2^jt)e^{\frac{i2^jt}{4s}}e^{i2^{-j}tsH}ds P_{\bot}f(x) \|_{L^\infty(X)}\\
 &\lesssim (2^jt)^{\frac12}\int^\infty_0\chi(s,2^jt)|2^{-j}ts|ds \|\varphi(2^{-j}\sqrt{H})P_{\bot}f\|_{L^1(X)}\\
 &\lesssim2^{2j}(2^{j}t)^{-1} \|\varphi(2^{-j}\sqrt{H})P_{\bot}f\|_{L^1(X)}\\
 &\lesssim2^{2j} (1+2^{-j}t)^{-\frac12} \|\varphi(2^{-j}\sqrt{H})P_{\bot}f\|_{L^1(X)}.
 \end{aligned}
 \end{equation}
Then, we complete the proof of \eqref{est:Pk'}, hence  \eqref{est:Pk} has been proved, if we could prove Proposition \ref{prop:DS},  Proposition \ref{prop:LP0}, and   Proposition \ref{prop:LP}.

\begin{proof}[The proof of Proposition \ref{prop:DS}] The proof is modified from \cite{Zhang1} but we should be careful of the role of the parameters $\sigma$. Note that
$$H^\pm_\sigma=-\partial_r^2-\frac 1r\partial_r+\frac1{r^2}(\frac12\pm i\partial_\theta)^2,$$
 is a bit different from the operator $-\Delta_g$ considered in \cite{Zhang1}. The difference leads to a shift of the eigenvalues. Using the spectrum calculus of Cheeger-Taylor \cite{CT1,CT2} (also see \cite{Zhang1}),
and using the coordinate
\begin{equation}\label{eq:coordinate}
x=(r, \theta),\quad y=(s,\omega),
\end{equation}
we obtain the Schr\"odinger kernel $e^{-it H_{\sigma}^\pm}(x,y)$
  \begin{align}\label{equ:ktxyschr}
   K_{\sigma}^\pm(t, r,\theta,s,\omega)
    =&\frac1{2\pi\sigma} \sum_{k\in\Z} e^{-i\frac k\sigma(\theta-\omega)}  K_{\sigma}^\pm(k; t, r, s),
  \end{align}
  where
  \begin{equation}\label{KS}
\begin{split}
K_{\sigma}^\pm(k; t, r, s)=\int_0^\infty e^{-it\rho^2}J_{\nu_{\pm}(k)}(r \rho)J_{\nu_{\pm}(k)}(s\rho) \,\rho d\rho,\quad \nu_\pm(k)=\big|\frac{k}{\sigma}\pm\frac12\big|.
\end{split}
\end{equation}
Writing $\nu_\pm= \nu_\pm(k)$ and using the Weber identity
(see e.g. \cite[Proposition 8.7]{Taylor}) and  analytic continuation  like in \cite[Page 161, (8.88)]{Taylor}, we have
 \begin{align}\label{eq:K-bessel}
K_{\sigma}^\pm(k; t, r, s)=&\int_0^\infty e^{-it\rho^2}J_{\nu_{\pm}}(r \rho)J_{\nu_{\pm}}(s\rho) \,\rho d\rho\\\nonumber
  =&\lim_{\epsilon\searrow0}\int_0^\infty e^{-(\epsilon+it)\rho^2}J_{\nu_{\pm}}(r \rho)J_{\nu_{\pm}}(s\rho) \,\rho d\rho\\\nonumber
  =&\lim_{\epsilon\searrow0}\frac{e^{-\frac{r^2+s^2}{4 (\epsilon+it)}}}{2(\epsilon+it)} I_{\nu_{\pm}}\Big(\frac{rs}{2(\epsilon+it)}\Big)=\frac{e^{-\frac{r^2+s^2}{4 it}}}{2it} I_{\nu_{\pm}}\Big(\frac{rs}{2it}\Big),
\end{align}
where $I_\nu$ is the modified Bessel function.
For $z=\frac{rs}{2(\epsilon+it)}$ with $\epsilon>0$, we use the integral representation of the modified Bessel function $I_\nu$ in \cite{Taylor}  to write
\begin{equation}\label{m-bessel}
I_{\nu_{\pm}}(z)=\frac1{\pi}\int_0^\pi e^{z\cos \tau} \cos(\nu_{\pm} \tau) d\tau-\frac{\sin(\nu_{\pm}\pi)}{\pi}\int_0^\infty e^{-z\cosh \tau} e^{-\tau\nu_{\pm}} d\tau.
\end{equation}
To prove \eqref{dis:KS}, it suffices to estimate
 \begin{equation}\label{eq:I1}
 \frac1{2\pi\sigma} \frac1\pi \sum_{k\in\Z} e^{-i\frac k\sigma (\theta-\omega)} \int_0^{\pi} e^{\frac{rs}{2it}\cos \tau} \cos(\nu_\pm(k) \tau) d\tau,
  \end{equation}
  and
    \begin{equation}\label{eq:I2}
 \frac1{2\pi\sigma} \frac1\pi \sum_{k\in\Z} e^{-i\frac k\sigma (\theta-\omega)} \sin(\nu_{\pm}(k) \pi)\int_0^\infty e^{-\frac{rs}{2it}\cosh \tau} e^{-\tau\nu_\pm(k)} d\tau.
  \end{equation}
 We first estimate \eqref{eq:I1}. Recall $\nu_\pm(k)=|\frac{k}{\sigma}\pm\frac12|$, we compute the summation
 \begin{align*}
 &\sum_{k\in\Z} \frac1{2\pi\sigma}\cos(\nu_\pm \tau)e^{-i\frac k\sigma (\theta-\omega)}=\sum_{k\in\Z} \frac1{4\pi\sigma}e^{-i\frac k\sigma (\theta-\omega)}\big(e^{i\nu_\pm\tau}+e^{-i\nu_\pm\tau}\big)\\
 &=\frac12\sum_{j\in\Z}\left(e^{\pm i\frac12 \tau}\delta(\theta-\omega-\tau+2j \pi\sigma)+e^{\mp i\frac12 \tau}\delta(\theta-\omega+\tau+2j \pi\sigma)  \right)
 \end{align*}
 where we use the fact $0<\sigma\leq 1$ and the Poisson summation formula
 \begin{align}
 \sum_{j\in\Z}\delta(t-jT)=\sum_{k\in\Z}\frac1T e^{i2\pi\frac{k}T t},\quad T=2\pi\sigma.
 \end{align}
Putting this into \eqref{eq:I1}, we estimate
 \begin{equation}\label{est:I1}
 \begin{split}
\eqref{eq:I1}&\lesssim  \sum_{j\in\Z}\Big|\int^{\pi}_0 e^{\frac{rs}{2it}\cos \tau}[e^{\pm i\frac12 \tau}\delta(\theta-\omega-\tau+2 j\sigma \pi)+e^{\mp i\frac12 \tau}\delta(\theta-\omega+\tau+2j\sigma \pi)]d\tau\Big|\\
 &\lesssim \sum_{\{j\in\Z:0\leq|\theta-\omega+2j\pi\sigma|\leq\pi\}} 1\lesssim_\sigma 1.
 \end{split}
 \end{equation}
 In the last inequality, we use the fact that the summation is a finite summation due to $\theta,\omega\in [-\pi\sigma,\sigma\pi)$.
Next we consider the second term \eqref{eq:I2}
 \begin{align}\nonumber
 &\sum_{k\in\Z} e^{-i\frac k\sigma(\theta-\omega)}\frac{\sin(\nu_\pm \pi)}{\pi}\int_0^\infty e^{-\frac{rs}{2it}\cosh \tau} e^{-\tau\nu_\pm} d\tau.
 \end{align}
Recall $ k\in\Z,~ k\neq0$, we obtain
\begin{equation}
\begin{split}
\sin(\pi\nu_\pm)=\sin(|\frac k\sigma\pm \frac12|\pi)=\begin{cases}
\pm\cos(\frac k\sigma\pi),\qquad &  k\geq1;\\
\mp\cos(\frac k\sigma\pi),\qquad &  k\leq -1.
\end{cases}
\end{split}
\end{equation}
Therefore we have
 \begin{align*}
 &\sum_{k\in\Z}\sin(|\frac k\sigma\pm\frac12|\pi)e^{-|\frac k\sigma\pm\frac12|\tau} e^{-i\frac k\sigma(\theta-\omega)}\\
 &=e^{-\frac\tau 2}+\sum_{k\geq1}\pm\frac{e^{i\frac k\sigma\pi}+e^{-i\frac k\sigma\pi}}{2}e^{-(\frac k\sigma\pm\frac12)\tau}e^{-i\frac k\sigma(\theta-\omega)}+\sum_{k\leq-1}\mp\frac{e^{i\frac k\sigma\pi}+e^{-i\frac k\sigma\pi}}{2}e^{(\frac k\sigma\pm\frac12)\tau}e^{-i\frac k\sigma(\theta-\omega)}\\
 &=e^{-\frac\tau 2}\sum_{k\geq1}\pm\frac{e^{i\frac k\sigma\pi}+e^{-i\frac k\sigma\pi}}{2}e^{-(\frac k\sigma\pm\frac12)\tau}e^{-i\frac k\sigma(\theta-\omega)}+\sum_{k\geq1}\mp\frac{e^{i\frac k\sigma\pi}+e^{-i\frac k\sigma\pi}}{2}e^{-\frac k\sigma\tau\pm\frac12\tau}e^{i\frac k\sigma(\theta-\omega)}\\
 &=e^{-\frac\tau 2}\pm\frac{e^{\mp\frac12\tau}}2\sum_{k\geq1}e^{-\frac{k}\sigma\tau}\big(e^{-i\frac k\sigma(\theta-\omega+\pi)}+e^{-i\frac k\sigma(\theta-\omega-\pi)}\big)\\
 &\qquad\qquad\qquad\qquad\qquad\mp\frac{e^{\pm\frac12\tau}}2\sum_{k\geq1} e^{-\frac{k}\sigma\tau}\big(e^{i\frac k\sigma(\theta-\omega+\pi)}+e^{i\frac k\sigma(\theta-\omega-\pi)}\big).
 \end{align*}
Note that
 \begin{align}
 \sum_{k=1}^\infty e^{ikz}=\frac{e^{iz}}{1-e^{iz}},\qquad \mathrm{Im} z>0,
 \end{align}
we finally obtain
 \begin{equation}\label{est:I2}
 \begin{aligned}
 &\sum_{k\in\Z,k\neq0}\sin(|\frac k\sigma\pm\frac12|\pi)e^{-|\frac k\sigma\pm\frac12|\tau} e^{-i\frac k\sigma(\theta-\omega)}\\
 &=\pm\frac{e^{\mp\frac12\tau}}2\left( \frac{e^{-\frac\tau\sigma-i\frac{\theta-\omega+\pi}\sigma}}{1- e^{-\frac\tau\sigma-i\frac{\theta-\omega+\pi}\sigma}}+ \frac{e^{-\frac\tau\sigma-i\frac{\theta-\omega-\pi}\sigma}}{1- e^{-\frac\tau\sigma-i\frac{\theta-\omega-\pi}\sigma}} \right)\\ &\qquad\qquad\mp\frac{e^{\pm\frac12\tau}}2\left( \frac{e^{-\frac\tau\sigma+i\frac{\theta-\omega+\pi}\sigma}}{1- e^{-\frac\tau\sigma+i\frac{\theta-\omega+\pi}\sigma}}+ \frac{e^{-\frac\tau\sigma+i\frac{\theta-\omega-\pi}\sigma}}{1- e^{-\frac\tau\sigma+i\frac{\theta-\omega-\pi}\sigma}} \right)\\
 &=\frac14\Big(\frac{ e^{-\frac\tau\sigma}2\sinh\frac\tau2 \pm e^{\mp\frac\tau2-i\frac{\theta-\omega+\pi}\sigma} \mp e^{\pm\frac\tau2+i\frac{\theta-\omega+\pi}\sigma} } {\cosh\frac\tau\sigma- \cos\frac{\theta-\omega+\pi}\sigma} \\
 &\qquad\qquad+ \frac{ e^{-\frac\tau\sigma}2\sinh\frac\tau2 \pm e^{\mp\frac\tau2-i\frac{\theta-\omega-\pi}\sigma} \mp e^{\pm\frac\tau2+i\frac{\theta-\omega-\pi}\sigma}} {\cosh\frac\tau\sigma- \cos\frac{\theta-\omega-\pi}\sigma} \Big)\\
 &=:B_\pm(\tau,\theta,\omega,\sigma).
 \end{aligned}
 \end{equation}
 Plug this into \eqref{eq:I2}, we aim to prove
  \begin{align}\label{est:B}
\int_0^\infty \big(\big|B_\pm(\tau,\theta,\omega,\sigma)\big|+e^{-\frac\tau 2}\big) d\tau\lesssim 1.
 \end{align}
 If this can be done, we collect \eqref{eq:K-bessel} and \eqref{est:I1} to obtain \eqref{dis:KS}. Hence we are left to prove \eqref{est:B}. Obviously, one has $\int_0^\infty e^{-\frac\tau 2} d\tau\lesssim 1$, so it suffices to prove
  \begin{align}
 \int^\infty_0\Big|\frac{e^{-\frac\tau\sigma}2\sinh\frac\tau2 \pm e^{\mp\frac\tau2-i\frac{\theta-\omega+\pi}\sigma} \mp e^{\pm\frac\tau2+i\frac{\theta-\omega+\pi}\sigma}} {\cosh\frac\tau\sigma- \cos\frac{\theta-\omega+\pi}\sigma}\Big|d\tau\lesssim 1,
 \label{est:B1}\\
  \int^\infty_0\Big|\frac{e^{-\frac\tau\sigma}2\sinh\frac\tau2 \pm e^{\mp\frac\tau2-i\frac{\theta-\omega-\pi}\sigma} \mp e^{\pm\frac\tau2+i\frac{\theta-\omega-\pi}\sigma}} {\cosh\frac\tau\sigma- \cos\frac{\theta-\omega-\pi}\sigma}\Big|d\tau\lesssim 1,
 \end{align}
 where the implicit constant is independent of $\theta, \omega$. We only prove the first one since the second one can be proved by the same argument. We observe that
  \begin{align*}
 \cosh\frac\tau\sigma- \cos\frac{\theta-\omega+\pi}\sigma=2\sinh^2\Big(\frac\tau{2\sigma}\Big) +2\sin^2\Big(\frac{\theta-\omega+\pi}{2\sigma}\Big).
 \end{align*}
 To prove \eqref{est:B1},  we aim to show
  \begin{align}
\int^1_0 \Big|\frac{e^{-\frac\tau\sigma}2\sinh\frac\tau2 \pm e^{\mp\frac\tau2-i\frac{\theta-\omega+\pi}\sigma} \mp e^{\pm\frac\tau2+i\frac{\theta-\omega+\pi}\sigma}}  {\sinh^2(\frac\tau{2\sigma}) +\sin^2(\frac{\theta-\omega+\pi}{2\sigma})}\Big|d\tau \lesssim 1,\label{est:B11}\\
\int^\infty_1 \Big|\frac{e^{-\frac\tau\sigma}2\sinh\frac\tau2 \pm e^{\mp\frac\tau2-i\frac{\theta-\omega+\pi}\sigma} \mp e^{\pm\frac\tau2+i\frac{\theta-\omega+\pi}\sigma}}  {\sinh^2(\frac\tau{2\sigma}) +\sin^2(\frac{\theta-\omega+\pi}{2\sigma})}\Big|d\tau \lesssim 1. \label{est:B12}
 \end{align}
 The left hand side of \eqref{est:B12} can be estimated by
  \begin{align*}
\lesssim \int^\infty_1 e^{-(\frac2\sigma-\frac12)\tau}+e^{-(\frac1\sigma\mp\frac12)\tau} +e^{-(\frac1\sigma\pm\frac12)\tau}d\tau\lesssim 1,
 \end{align*}
 where we use the fact that $0<\sigma\leq 1$. Hence we have proved \eqref{est:B12}. The proof of \eqref{est:B11} is a bit complicate due to the singularity of the denominator, as $\tau\to 0$ and $\sin^2(\frac{\theta-\omega+\pi}{2\sigma})\to 0$.
 Fortunately, we observe that
  \begin{align*}
  &e^{-\frac\tau\sigma}2\sinh\frac\tau2 \pm e^{\mp\frac\tau2-i\frac{\theta-\omega+\pi}\sigma} \mp e^{\pm\frac\tau2+i\frac{\theta-\omega+\pi}\sigma}\\
  &=2 e^{-\frac\tau\sigma}\sinh\frac\tau2- 2 \sinh\frac\tau2\cos\big(\frac{\theta-\omega+\pi}\sigma\big)\mp i 2\cosh\frac\tau2 \sin\big(\frac{\theta-\omega+\pi}\sigma\big)\\
  &=2 \sinh\frac\tau2\Big(e^{-\frac{\tau}{2\sigma}}\sinh(\frac{\tau}{2\sigma})+\sin^2\big(\frac{\theta-\omega+\pi}{2\sigma}\big)\Big)\mp 4 i \cosh\frac\tau2 \sin\big(\frac{\theta-\omega+\pi}{2\sigma}\big)\cos\big(\frac{\theta-\omega+\pi}{2\sigma}\big)\to 0
   \end{align*}
 as $\tau\to 0$ and $\sin^2(\frac{\theta-\omega+\pi}{2\sigma})\to 0$. Therefore, let $b=\big|\sin\big(\frac{\theta-\omega+\pi}{2\sigma}\big)\big|\leq 1$, we control the left hand side of \eqref{est:B11} by
 \begin{align*}
\lesssim_\sigma \int^1_0 \frac{\tau^2+b^2+b}  {\tau^2+b^2}d\tau \lesssim 1+\int^1_0 \frac{b}  {\tau^2+b^2}d\tau\lesssim 1.
 \end{align*}
We emphasize that the implicit constant in the last inequality is independent of $b$, so we have shown \eqref{est:B11}.

Finally, the proof of Proposition \ref{prop:DS} is completed.

\end{proof}

\begin{proof}[The proof of Proposition \ref{prop:LP0} and Proposition \ref{prop:LP}]

By standard methods, the associated Bernstein inequalities and the Littlewood-Paley square function inequalities on the cosmic string spacetime, in Proposition \ref{prop:LP0} and Proposition \ref{prop:LP} respectively are the direct consequences of  the Gaussian upper bounds of heat kernel
\begin{align}\label{est:heat}
 |e^{-tH_{\sigma}^\pm}(x,y)|\lesssim t^{-1}e^{-\frac{c d_g(x,y)^2}{t}}, \forall \, t>0,
 \end{align}
 where $c$ is a small constant independent of $x,y$,  and $d_g(x,y)=\sqrt{r^2+s^2-2rs\cos(d_h(\theta,\omega))}$ denotes the distance between $x=(r,\theta)$ and $y=(s,\omega)$ on the cone $X=(0,+\infty)\times \mathbb{S}^1_{\sigma}$.
 Here $d_h$ is the distance function on $\mathbb{S}^1_{\sigma}$, and $d_h(\theta,\omega)=|\theta-\omega+2\pi j\sigma|$ for any $j\in\Z$.
We refer \cite{A, CDYZh,WZZ2} for more details of the proof of Bernstein inequalities and \cite{BFHM, Zhang1, ZZ} for the Littlewood--Paley inequalities.
The proof of \eqref{est:heat} can be modified from the proof of Proposition \ref{prop:DS}.
Replacing $e^{-it\rho^2}$ by $e^{-t\rho^2}$ in \eqref{KS} and  modifying \eqref{eq:K-bessel}, we obtain the heat kernel
  \begin{align}\label{equ:heat}
  e^{-tH_{\sigma}^\pm}(x,y)= e^{-tH_{\sigma}^\pm}(t, r,\theta,s,\omega)
    =\frac1{2\pi\sigma} \frac{e^{-\frac{r^2+s^2}{4t}}}{2t}  \sum_{k\in\Z} e^{-i\frac k\sigma (\theta-\omega)}  I_{\nu_{\pm}(k)}\Big(\frac{rs}{2t}\Big).
  \end{align}
  To prove \eqref{est:heat}, it suffices to show
 \begin{equation}\label{eq:h1}
e^{-\frac{r^2+s^2}{4t}} \Big| \sum_{k\in\Z} e^{-i\frac k\sigma (\theta-\omega)}  \int_0^{\pi} e^{\frac{rs}{2t}\cos \tau} \cos(\nu_\pm(k) \tau) d\tau \Big| \lesssim e^{-\frac{cd_g(x,y)^2}{t}},
  \end{equation}
  and
  \begin{equation}\label{eq:h2}
  e^{-\frac{r^2+s^2}{4t}} \Big| \sum_{k\in\Z} e^{-i\frac k\sigma (\theta-\omega)}  \sin(\nu_{\pm}(k) \pi)\int_0^\infty e^{-\frac{rs}{2t}\cosh \tau} e^{-\tau\nu_\pm(k)} d\tau \Big| \lesssim  e^{-\frac{cd_g(x,y)^2}{t}}.
  \end{equation}
Similarly as arguing \eqref{est:I1},
From \eqref{metric}, we note that $\theta,\omega\in [-\sigma\pi,\sigma\pi)$ implies $\theta-\omega\in (-2\pi\sigma,2\pi\sigma)$. Define the set
$$A=\{j\in\Z:0\leq|\theta-\omega+2j\pi\sigma|\leq\pi\},$$ then the number of elements of $A$ is no more than $2/\sigma$. Therefore, we obtain
 \begin{equation*}
 \begin{split}
 & \sum_{\{j\in A\}}  e^{\frac{rs}{2t}\cos (\theta-\omega+2j\pi \sigma)} \lesssim_{\sigma}  e^{\frac{rs}{2t}\cos d_h(\theta,\omega)}. \end{split}
\end{equation*}
Finally, we see that
 \begin{equation*}
 \begin{split}
& \text{LHS of }\,\eqref{eq:h1}\lesssim e^{-\frac{r^2+s^2-2rs\cos d_h(\theta,\omega)}{4t}}   \lesssim e^{-\frac{ d_g(x,y)^2}{4t}}
\end{split}
\end{equation*}
and
 \begin{equation*}
 \begin{split}
 \text{LHS of }\,\eqref{eq:h2}&\lesssim e^{-\frac{r^2+s^2}{4t}} |\int^\infty_0e^{-\frac{rs}{2t} \cosh \tau} (B_{\pm}(\tau,\theta,\omega,\sigma)+e^{-\frac\tau2})d\tau\\
 &\lesssim e^{-\frac{r^2+s^2}{4t}} e^{-\frac{rs}{2t}} \int^\infty_0(|B_{\pm}(\tau,\theta,\omega,\sigma)|+e^{-\frac\tau2})d\tau\\
 &\lesssim e^{-\frac{d_g(x,y)^2}{4t}}.
 \end{split}
 \end{equation*}

Summing up, we conclude the proof.
\end{proof}

\subsection{Proof of dispersive estimate for $k=0$}
In this subsection, we concentrate on the case $k=0$, which involves the modified Bessel functions $K_\nu(r)$, unbounded near 0. In order to solve this issue, we will modify the previous construction at $k=0$.

\underline{{\bf Proof of \eqref{est:P0}.}} Since any element of the range of $P_{0}$ can be written as \eqref{equ:f0} below, estimate \eqref{est:P0} is an
immediate consequence of the following result:
\begin{proposition}\label{prop:l-disper0}
  Let $\varphi\in\mathcal{C}_c^\infty([1,2])$, $\tilde{\varphi}\in\mathcal{C}_c^\infty([1/2,4])$ with $\tilde{\varphi}=1$ on the support of $\varphi$ and let
  for some $c\in \mathbb{C}$ and $(\phi_{0},\psi_{0})\in Y$
  \begin{equation}\label{equ:f0}
  P_0f=c\begin{pmatrix}
    \cos \gamma \cdot K_{\frac12}(r) \\
    \sin \gamma \cdot K_{\frac12}(r)
  \end{pmatrix}
  +\begin{pmatrix}
     \phi_{0}(r) \\
     \psi_{0}(r)
  \end{pmatrix}.
  \end{equation}
  Then there exists a constant $C$ such that for all
  $j\in \mathbb{Z}$ one has
  \begin{equation}\label{est:dis0-1}
  \begin{split}
  \Big\|\big(1+&(2^j |x|)^{-\frac12}\big)^{-1}\varphi(2^{-j}|{\mathcal{D}}_{\sigma,\gamma}|) e^{it\mathcal{D}_{\sigma,\gamma}}P_0f(x)\Big\|_{[L^\infty(X)]^2}
  \\&\leq C2^{2j}(1+2^{j}|t|)^{-\frac12} \|\big(1+(2^j |x|)^{-\frac12}\big)\tilde{\varphi}(2^{-j}|{\mathcal{D}}_{\sigma,\gamma}|) P_0f\|_{[L^1{(X)]^2}}.
  \end{split}
  \end{equation}
\end{proposition}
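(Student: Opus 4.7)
First, by Proposition \ref{prop:Sp} and \eqref{kernel0}, the operator $\varphi(2^{-j}|\mathcal{D}_{\sigma,\gamma}|)\,e^{it\mathcal{D}_{\sigma,\gamma}}P_0$ has a diagonal kernel whose two entries are
\[
m_\nu^j(t,r,s) \;=\; \int_0^\infty e^{-it\rho}\,\varphi(2^{-j}\rho)\,J_\nu(r\rho)\,J_\nu(s\rho)\,\rho\,d\rho,
\]
one for $\nu=+\tfrac12$ and one for $\nu=-\tfrac12$, the arrangement on the diagonal being determined by whether $\sin\gamma=0$ or $\cos\gamma=0$. Since the weight $(1+(2^j|x|)^{-1/2})^{-1}$ is scalar and acts uniformly on both components, \eqref{est:dis0-1} will follow from the pointwise kernel bound
\[
|m_\nu^j(t,r,s)| \;\leq\; C\,2^{2j}(1+2^j|t|)^{-1/2}\bigl(1+(2^jr)^{-1/2}\bigr)\bigl(1+(2^js)^{-1/2}\bigr),\quad \nu=\pm\tfrac12,
\]
uniformly in $j\in\Z$.

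Next, I would exploit the closed forms $J_{1/2}(z)=\sqrt{2/(\pi z)}\sin z$ and $J_{-1/2}(z)=\sqrt{2/(\pi z)}\cos z$. Product--to--sum identities give
\[
m_\nu^j(t,r,s) \;=\; \frac{1}{\pi\sqrt{rs}}\int_0^\infty e^{-it\rho}\,\varphi(2^{-j}\rho)\bigl[\cos((r-s)\rho)\pm\cos((r+s)\rho)\bigr]\,d\rho,
\]
with $-$ for $\nu=\tfrac12$ and $+$ for $\nu=-\tfrac12$. Since $\varphi$ vanishes near $0$, writing each cosine as a sum of two exponentials and extending the $\rho$--integral to $\mathbb{R}$ produces a sum of four terms $\hat{\varphi}_j(t\pm(r\mp s))/\sqrt{rs}$, where $\hat{\varphi}_j(\xi)=2^j\hat{\varphi}(2^j\xi)$ obeys the Schwartz decay $|\hat{\varphi}_j(\xi)|\lesssim_N 2^j(1+2^j|\xi|)^{-N}$. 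A rescaling $(r,s,t)=2^{-j}(\tilde r,\tilde s,\tilde t)$ renders the target scale--invariant and equivalent to
\[
\sum_{\epsilon_1,\epsilon_2\in\{\pm 1\}}(1+|\tilde t+\epsilon_1\tilde r+\epsilon_2\tilde s|)^{-N} \;\lesssim\; \sqrt{\tilde r\tilde s}\,(1+|\tilde t|)^{-1/2}\bigl(1+\tilde r^{-1/2}\bigr)\bigl(1+\tilde s^{-1/2}\bigr).
\]

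The last step is a case analysis in the $(\tilde r,\tilde s,\tilde t)$ space. When $\min(\tilde r,\tilde s)\lesssim 1$, the weight $(1+\tilde r^{-1/2})(1+\tilde s^{-1/2})$ alone dominates the $1/\sqrt{\tilde r\tilde s}$ singularity, and the $(1+|\tilde t|)^{-1/2}$ factor is supplied by the Schwartz decay since all four phases lie within $O(1)$ of $\tilde t$. When $\tilde r,\tilde s\gtrsim 1$, the four phase centers $\tilde t+\epsilon_1\tilde r+\epsilon_2\tilde s$ form a rectangle of side lengths $\geq 2\min(\tilde r,\tilde s)$, so at most one of them lies near zero, and a short geometric bookkeeping argument then gives the required bound.

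I expect the hard part to be the intermediate regime $\min(\tilde r,\tilde s)\sim 1$ with $|\tilde t|\sim \tilde r+\tilde s\gg 1$, where one phase is essentially stationary and the weights provide no gain, so that the $(1+|\tilde t|)^{-1/2}$ must be extracted purely from the spacing of the four phase centers. This is structurally analogous to the stationary--phase estimate used for $P_\perp$ in Section \ref{sec:dispest}, and it can also be attacked via the subordination formula (Proposition \ref{prop:sub}) applied to $\sqrt{H_\nu}$ with $H_\nu=-\partial_r^2-\tfrac{1}{r}\partial_r+\nu^2/r^2$, combined with the weighted dispersive bound $|e^{-itH_{-1/2}}(r,s)|\lesssim (|t|rs)^{-1/2}$ obtained from Weber's identity $e^{-itH_\nu}(r,s)=\tfrac{e^{-(r^2+s^2)/(4it)}}{2it}I_\nu\bigl(\tfrac{rs}{2it}\bigr)$ and the relation $|I_\nu(iy)|=|J_\nu(y)|$.
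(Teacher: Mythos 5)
Your proposal is correct, and it takes a genuinely different route from the paper.

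The paper proves this proposition by reducing (via scaling) to $j=0$ and then invoking Lemma~\ref{lem:hankelb} --- a general estimate on Hankel-type oscillatory integrals $\int e^{it\rho}J_{\nu}(r\rho)J_{\nu}(s\rho)\phi(\rho)\rho\,d\rho$ for arbitrary $\nu\ge-1/2$, which is cited from \cite{CDYZh}. Applied with $\nu=1/2$ ($\nu_-=0$) and $\nu=-1/2$ ($\nu_-=1/2$) to the two diagonal entries $E_{\frac12,j}$ and $F_{\frac12,j}$ of the propagator kernel, this gives exactly the weighted kernel bound, after which the $L^1\to L^\infty$ estimate is immediate.

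You instead bypass the general lemma entirely by exploiting the closed elementary forms $J_{\pm 1/2}(z)=\sqrt{2/(\pi z)}\{\sin z,\cos z\}$: the factor $\rho$ in the measure cancels the $1/\rho$ from the prefactors, the product-to-sum identity turns the kernel into four scalar Fourier transforms $\hat\varphi_j(t+\epsilon_1 r+\epsilon_2 s)/\sqrt{rs}$, and Schwartz decay plus scaling reduce everything to the scale-invariant pointwise inequality
\[
\sum_{\epsilon_1,\epsilon_2\in\{\pm1\}}(1+|\tilde t+\epsilon_1\tilde r+\epsilon_2\tilde s|)^{-N}
\;\lesssim\;
\sqrt{\tilde r\tilde s}\,(1+|\tilde t|)^{-1/2}(1+\tilde r^{-1/2})(1+\tilde s^{-1/2}).
\]
This inequality is indeed true and the case analysis you sketch goes through; your method is more self-contained than the paper's, at the cost of being special to the half-integer orders $\nu=\pm1/2$ (which is all that is needed here), whereas Lemma~\ref{lem:hankelb} is uniform in $\nu$.

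One small correction to your final paragraph: the regime you flag as hard ($\min(\tilde r,\tilde s)\sim 1$, $|\tilde t|\sim\tilde r+\tilde s\gg1$) is actually the easiest. There the weights are $O(1)$ and you need $\frac{1}{\sqrt{\tilde r\tilde s}}\sum\lesssim(1+|\tilde t|)^{-1/2}$; since each summand is trivially $\le 1$ and $\sqrt{\tilde r\tilde s}\gtrsim\sqrt{\tilde r}\sim\sqrt{|\tilde t|}$, the bound follows with no use of the phase-center spacing at all. No stationary-phase or subordination argument is needed for this proposition; the Schwartz decay of $\hat\varphi$ plus the trivial bound $\le 1$ already suffice in every regime. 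The subordination-formula detour you mention would work but is overkill.
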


In order to prove Proposition \ref{prop:l-disper0}, we recall the following two key lemmas about the properties of Bessel function. We refer \cite[Lemma 5.1, Lemma 5.2]{CDYZh} for details of proof.

\begin{lemma}\label{lem:bessel}
  Let $\nu_{-}=\max\{0,-\nu\}$.
  For all $x,\nu\in \mathbb{R}$ we have:
  \begin{equation}\label{eq:bess1}
    |J_{\nu}(x)|\le c_{\nu}|x|^{\nu}\bra{x}^{-\nu-\frac 12},
    \qquad
    |J_{\nu}(x)|\le c_{\nu}(1+|x|^{-\nu_{-}}),
  \end{equation}
  \begin{equation}\label{eq:bess2}
    |J'_{\nu}(x)|=
    |J_{\nu-1}(x)-\nu J_{\nu}(x)/x|
    \le c_{\nu}|x|^{\nu-1}\bra{x}^{-\nu+\frac 12}.
  \end{equation}
  Moreover we can write
  \begin{equation}\label{eq:bess3}
    J_{\nu}(x)=x^{-1/2}(e^{ix}a_{+}(x)+e^{-ix}a_{-}(x))
  \end{equation}
  for two functions $a_{\pm}$ depending on $\nu,x$
  and satisfying for all $N\ge1$ and $|x|\ge1$
  \begin{equation}\label{eq:bess4}
    |a_{\pm}(x)|\le c_{\nu,0},
    \qquad
    |\partial_{x}^{N}a_{\pm}(x)|\le c_{\nu,N}
    |x|^{-N-1}.
  \end{equation}
  Here $c_{\nu}$ and $c_{\nu,N}$ are various constants
  depending only on $\nu$ and $\nu,N$ respectively.
\end{lemma}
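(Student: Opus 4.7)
\textbf{Proof proposal for Lemma \ref{lem:bessel}.} The three estimates are essentially a unification of the classical small--$x$ and large--$x$ asymptotics for $J_{\nu}$, so the plan is to separate the regimes $|x|\le 1$ and $|x|\ge 1$ and glue the bounds via the bracket $\bra{x}=(1+x^{2})^{1/2}$.

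First, for \eqref{eq:bess1} and the region $|x|\le 1$, I would invoke the power series
\[
J_{\nu}(x)=\sum_{m\ge 0}\frac{(-1)^{m}}{m!\,\Gamma(m+\nu+1)}\Big(\frac{x}{2}\Big)^{2m+\nu},
\]
which, when $\nu$ is not a nonpositive integer, yields $|J_{\nu}(x)|\le c_{\nu}|x|^{\nu}$; since $\bra{x}\sim 1$ there, this gives the first inequality. When $\nu$ is a nonpositive integer, the identity $J_{-n}=(-1)^{n}J_{n}$ makes $J_{\nu}$ smooth at the origin and bounded by a constant, so the companion bound $|J_{\nu}(x)|\le c_{\nu}(1+|x|^{-\nu_{-}})$ also holds. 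In the region $|x|\ge 1$ I would use the well--known asymptotic $J_{\nu}(x)=\sqrt{2/(\pi x)}\cos(x-\nu\pi/2-\pi/4)+O(|x|^{-3/2})$, which gives $|J_{\nu}(x)|\lesssim |x|^{-1/2}$, and this matches $|x|^{\nu}\bra{x}^{-\nu-1/2}\sim |x|^{-1/2}$. The two regimes combine into a single estimate by the elementary inequality $|x|^{\nu}\bra{x}^{-\nu-1/2}\asymp \min(|x|^{\nu},|x|^{-1/2})$.

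For \eqref{eq:bess2}, the identity $J'_{\nu}(x)=J_{\nu-1}(x)-\nu J_{\nu}(x)/x$ is the standard recurrence for derivatives of Bessel functions (it is a direct consequence of the two three--term relations $2J'_{\nu}=J_{\nu-1}-J_{\nu+1}$ and $2\nu J_{\nu}/x=J_{\nu-1}+J_{\nu+1}$). Applying \eqref{eq:bess1} with $\nu$ replaced by $\nu-1$ to bound $J_{\nu-1}$ and then using \eqref{eq:bess1} to bound $\nu J_{\nu}/x$, both contributions are dominated by $c_{\nu}|x|^{\nu-1}\bra{x}^{-\nu+1/2}$.

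For \eqref{eq:bess3}--\eqref{eq:bess4}, the plan is to use the decomposition $J_{\nu}=\tfrac12(H^{(1)}_{\nu}+H^{(2)}_{\nu})$ together with the classical asymptotic expansions
\[
H^{(\pm)}_{\nu}(x)=\sqrt{\tfrac{2}{\pi x}}\,e^{\pm i(x-\nu\pi/2-\pi/4)}\Big(\sum_{k=0}^{N-1}\frac{c^{\pm}_{k,\nu}}{x^{k}}+R^{\pm}_{N,\nu}(x)\Big),\qquad |x|\ge 1,
\]
where the remainder and its derivatives satisfy $|\partial_{x}^{j}R^{\pm}_{N,\nu}(x)|\lesssim |x|^{-N-j}$. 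Setting
\[
a_{\pm}(x)=\tfrac{1}{\sqrt{2\pi}}\,e^{\mp i(\nu\pi/2+\pi/4)}\Big(\sum_{k=0}^{N-1}\frac{c^{\pm}_{k,\nu}}{x^{k}}+R^{\pm}_{N,\nu}(x)\Big),
\]
one obtains \eqref{eq:bess3}. The pointwise bound $|a_{\pm}(x)|\le c_{\nu,0}$ is immediate. For the derivative bounds \eqref{eq:bess4} I would observe that each derivative of the polynomial part $\sum c^{\pm}_{k,\nu}x^{-k}$ gains a factor $|x|^{-1}$ per differentiation; choosing $N$ larger than the desired order of differentiation and controlling the remainder via its own asymptotic bound yields $|\partial_{x}^{N}a_{\pm}(x)|\le c_{\nu,N}|x|^{-N-1}$ (the extra $|x|^{-1}$ over $|x|^{-N}$ comes from the fact that the constant term $c^{\pm}_{0,\nu}$ disappears after differentiation).

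The main technical point is the derivative estimate in \eqref{eq:bess4}: it requires not just the leading asymptotics but a full expansion with uniform control of the remainder under differentiation, so I would rely on the integral (steepest--descent) representations of $H^{(\pm)}_{\nu}$, for which the $N$--term expansion with differentiable remainder is standard (e.g.\ \cite[\S 10.17]{DLMF}). Everything else is an elementary gluing via $\bra{x}$.
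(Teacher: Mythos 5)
The paper does not prove this lemma at all; it simply cites \cite[Lemma 5.1]{CDYZh} for the details. So there is no in-paper proof to compare against, and the evaluation must rest on the proposal's own merits.

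Your argument is the standard one and is essentially correct: series expansion near the origin gives $|J_{\nu}(x)|\lesssim_{\nu}|x|^{\nu}$ (and $|x|^{-\nu_{-}}$ captures the worst-case singularity for negative non-integer orders, while for negative integer orders $J_{-n}=(-1)^{n}J_{n}$ is harmless); the leading Hankel asymptotics give $|J_{\nu}(x)|\lesssim_{\nu}|x|^{-1/2}$ at infinity; and $\bra{x}$ glues the two regimes. The recurrence $J'_{\nu}=J_{\nu-1}-\nu J_{\nu}/x$ then reduces \eqref{eq:bess2} to two applications of \eqref{eq:bess1}, using $\bra{x}^{-\nu-1/2}\le\bra{x}^{-\nu+1/2}$. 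For \eqref{eq:bess3}--\eqref{eq:bess4} the decomposition $J_{\nu}=\tfrac12(H^{(1)}_{\nu}+H^{(2)}_{\nu})$ with the differentiable asymptotic expansion of the Hankel functions is exactly what is needed; the observation that the constant term $a_{0}(\nu)=1$ disappears upon differentiation, giving the extra factor $|x|^{-1}$ in the bound for $\partial_{x}^{N}a_{\pm}$, is the correct key point.

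One small presentational issue: as written, your formula for $a_{\pm}$ appears to depend on the truncation index $N$ of the asymptotic expansion, which would make $a_{\pm}$ change with the order of differentiation being estimated. The cleaner way to phrase it is to define $a_{\pm}$ intrinsically, e.g.\ $a_{\pm}(x)=\tfrac12\,x^{1/2}e^{\mp ix}H^{(1,2)}_{\nu}(x)$ (so $a_{-}=\overline{a_{+}}$ for real $x,\nu$), and then invoke, for each fixed differentiation order $N$, a sufficiently long differentiable expansion of $H^{(1,2)}_{\nu}$ to verify $|\partial_{x}^{N}a_{\pm}(x)|\lesssim_{\nu,N}|x|^{-N-1}$ on $|x|\ge1$. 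With that adjustment the proposal is complete.
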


\begin{lemma}\label{lem:hankelb}
  Let $\nu\ge-1/2$, $\nu_{-}=\max\{0,-\nu\}$,
  $\phi\in C_{c}^{\infty}([1,2])$.
  Then the integral
  \begin{equation}\label{eq:intI}
    I_{\nu}(t;r,s)=
    \int_{0}^{\infty}e^{it \rho}
      J_{\nu}(r\rho)J_{\nu}(s\rho) \phi(\rho)\rho d\rho
  \end{equation}
  satisfies the following estimate for all
  $r_{1},r_{2},t>0$:
  \begin{equation}\label{eq:intIest}
    |I_{\nu}(t;r,s)|\le
    c_{\nu}\bra{t}^{-1/2}(1+r^{-\nu_{-}})(1+s^{-\nu_{-}}).
  \end{equation}
\end{lemma}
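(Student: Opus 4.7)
The plan is to combine the two facets of Lemma \ref{lem:bessel}---the pointwise near-origin bound and the oscillatory expansion \eqref{eq:bess3}--\eqref{eq:bess4} away from it---with repeated integration by parts in $\rho$. Since $\phi$ is supported on $[1,2]$, the entire integration takes place on $\rho\in[1/2,2]$, bounded and bounded away from zero, so we never encounter endpoint issues. I partition the parameter region into four subcases according to whether $r\le 1$ or $r\ge 1$ and analogously for $s$, and handle each using the appropriate representation of $J_\nu$.

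The case $t\le 1$ and the subcase $r,s\le 1$ for $t\ge 1$ are direct. For $t\le 1$ the second bound in \eqref{eq:bess1} applied with $x=r\rho$ (respectively $x=s\rho$) at once gives $|I_\nu|\le c_\nu(1+r^{-\nu_-})(1+s^{-\nu_-})$, which is what we want since $\bra{t}^{-1/2}\sim 1$. For $t\ge 1$ with $r,s\le 1$, the function $\rho\mapsto J_\nu(r\rho)J_\nu(s\rho)\phi(\rho)\rho$ is smooth on $[1/2,2]$, and using \eqref{eq:bess2} together with the chain rule (iterated) its $\rho$-derivatives of every order are bounded by $c_N(1+r^{-\nu_-})(1+s^{-\nu_-})$. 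An $N$-fold integration by parts against $e^{it\rho}$ then yields $|I_\nu|\le c_N\bra{t}^{-N}(1+r^{-\nu_-})(1+s^{-\nu_-})$, which majorizes the target estimate.

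For the mixed case $r\ge 1$, $s\le 1$ (and symmetrically), I substitute \eqref{eq:bess3} into $J_\nu(r\rho)$ only, obtaining
\begin{equation*}
I_\nu(t;r,s)=r^{-1/2}\sum_{\varepsilon=\pm1}\int_{1/2}^{2}e^{i\rho(t+\varepsilon r)}\,\rho^{1/2}a_\varepsilon(r\rho)\,J_\nu(s\rho)\,\phi(\rho)\,d\rho,
\end{equation*}
whose amplitude is smooth in $\rho$ with derivatives bounded by $c(1+s^{-\nu_-})$ thanks to \eqref{eq:bess4} (valid because $r\rho\ge 1/2$) together with the small-argument control of $J_\nu(s\rho)$. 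If $t\ge 2r$ then $|t+\varepsilon r|\ge t/2$, and iterated integration by parts gives $|I_\nu|\le c_N r^{-1/2}(1+s^{-\nu_-})\bra{t}^{-N}$, beating the target since $r\ge 1$. If $t<2r$, the trivial estimate $|I_\nu|\le c\,r^{-1/2}(1+s^{-\nu_-})$ already suffices because $r^{-1/2}\le \sqrt{2}\,t^{-1/2}$.

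The main obstacle is the large--large subcase $r,s\ge 1$, where substituting \eqref{eq:bess3} into both Bessel factors produces
\begin{equation*}
I_\nu(t;r,s)=(rs)^{-1/2}\sum_{\varepsilon_1,\varepsilon_2=\pm1}\int_{1/2}^{2}e^{i\rho(t+\varepsilon_1 r+\varepsilon_2 s)}\,a_{\varepsilon_1}(r\rho)a_{\varepsilon_2}(s\rho)\phi(\rho)\,d\rho,
\end{equation*}
and along the ``light cones'' $t+\varepsilon_1 r+\varepsilon_2 s=0$ the oscillatory phase is stationary on $[1/2,2]$, so integration by parts alone fails. My plan is: if $t\ge 2(r+s)$, every phase derivative $t+\varepsilon_1 r+\varepsilon_2 s$ is at least $t/2$ in absolute value, so iterated integration by parts plus $(rs)^{-1/2}\le 1$ beats $\bra{t}^{-1/2}$; otherwise $\max(r,s)\ge t/4$ and, since both $r,s\ge 1$, one has $(rs)^{-1/2}\le c\,t^{-1/2}$, so the trivial bound on the (uniformly $O(1)$) integral finishes the proof. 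The subtlety is that the product of two $x^{-1/2}$ prefactors arising from the asymptotic expansion provides exactly the $|t|^{-1/2}$ gain needed precisely where stationary phase would otherwise fail.
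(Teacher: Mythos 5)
Your proof is correct. The paper itself does not prove this lemma, referring instead to \cite{CDYZh} for the details; but the approach you take---splitting into the four quadrants of $(r,s)$, using the near-origin bound \eqref{eq:bess1} when the Bessel argument can be small, using the oscillatory representation \eqref{eq:bess3}--\eqref{eq:bess4} when it is $\gtrsim 1$, and in the large--large regime trading possible phase-stationarity for the $(rs)^{-1/2}$ prefactor gain---is precisely the argument that the referenced proof follows. Indeed the paper's own proof of Lemma \ref{lem:Tnuqq'}, which it describes as ``modified from the technique used in the proof of Lemma \ref{lem:hankelb},'' uses the same decomposition of $[0,\infty)^2$ by the cutoff $\chi$ at $r,s\sim 1$ and the same $a_\pm$ machinery. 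Your closing observation that the $(rs)^{-1/2}$ prefactor furnishes the $\bra{t}^{-1/2}$ decay exactly where integration by parts would degenerate is the key point, and it is stated and used correctly, including the dichotomy $t\ge 2(r+s)$ versus $\max(r,s)\ge t/4$.

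Two minor cosmetic remarks. First, in the small--small case \eqref{eq:bess2} controls only the first $\rho$-derivative of $J_\nu(r\rho)$; to speak of ``every order'' you would need to iterate the Bessel ODE or recurrence relations to bound $\partial_\rho^k J_\nu(r\rho)\lesssim r^{\nu}$ for $r\le 1$, $\rho\in[1/2,2]$. But this is harmless, since a single integration by parts already yields $|t|^{-1}\le\bra{t}^{-1/2}$ for $t\ge 1$, so higher derivatives are never actually required. Second, in the mixed case your amplitude derivative bound via $|a_\pm'(x)|\lesssim|x|^{-2}$ requires $r\rho\ge 1$, which holds since $r\ge 1$ and $\rho\ge 1/2$ makes $r\rho\ge 1/2$ (and \eqref{eq:bess4} is stated for $|x|\ge 1$; the region $1/2\le r\rho<1$ is handled by continuity/boundedness of $a_\pm$ there and contributes only an $O(1)$ constant). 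Neither of these affects the validity of the argument.
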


\begin{proof}[Proof of Proposition \ref{prop:l-disper0}]
Let $f_j=\tilde{\varphi}(2^{-j}|\D_{\sigma,\gamma}|)f$, the kernel of $\varphi(2^{-j}|\D_{\sigma,\gamma}|)e^{it\D_{\sigma,\gamma}}P_0$ for $k=0$ can be written as
 \begin{equation}\label{equ:FE}
 {\bf K}_0^{(j)}(t,r,s)=\begin{cases}
 \begin{pmatrix}F_{\frac12,j}&0\\0&E_{\frac12,j}\end{pmatrix}, &\text{if}\;\sin\gamma=0,\\
 \begin{pmatrix}E_{\frac12,j}&0\\0&F_{\frac12,j}\end{pmatrix}, &\text{if}\;\cos\gamma=0,
 \end{cases}
 \end{equation}
where
 \begin{equation}\label{equ:F-E-j}
 \begin{aligned}
 &F_{\frac12,j}(t;r,s) =\frac1{2\pi\sigma}\int_0^\infty e^{-it\rho}J_{-\frac12}(r\rho)J_{-\frac12}(s\rho) \varphi(2^{-j}\rho)\,\rho d\rho,\\
 &E_{\frac12,j}(t;r,s) =\frac1{2\pi\sigma}\int_0^\infty e^{-it\rho}J_{\frac12}(r\rho)J_{\frac12}(s\rho) \varphi(2^{-j}\rho)\,\rho d\rho.
 \end{aligned}
 \end{equation}
By scaling, one has
\begin{equation*}
  E_{\frac12,j}(t;r,s)=2^{2j}E_{\frac12,0}(2^jt;2^jr, 2^js)
\end{equation*}
\begin{equation*}
  F_{\frac12,j}(t;r,s)=2^{2j}F_{\frac12,0}(2^jt;2^jr, 2^js).
\end{equation*}
Replacing $t', r', s'$ with $2^{j}t, 2^{j}r, 2^{j}s$ respectively, we obtain two special cases of \eqref{eq:intIest}: $E_{\frac12,0}=I_{\frac12}$ and $F_{\frac12,0}=I_{-\frac12}$
\begin{equation*}
  |E_{\frac12,0}(t,r,s)|\le
  C \bra{t}^{-1/2},
\end{equation*}
\begin{equation*}
  |F_{\frac12,0}(t,r,r)|\le
  C \bra{t}^{-1/2}(1+r^{-\frac12})(1+s^{-\frac12}).
\end{equation*}
Then, we can get
 \begin{equation}\label{est:F-E-j}
\begin{split}
\big|E_{\frac12,j}(t;r,s)\big|&\leq C 2^{2j}
  (1+2^j|t|)^{-1/2},\\
\big|F_{\frac12,j}(t;r,s)\big|&\leq C 2^{2j}
  (1+2^j|t|)^{-1/2}\big(1+(2^j r)^{-\frac12}\big)
  \big(1+(2^j s)^{-\frac12}\big).
\end{split}
\end{equation}
Define the operator
 \begin{equation}\label{T_j}
 \begin{split}
 T^F_jf(t,r)&=\int_0^\infty F_{\frac12,j}(t;r,s) f(s)\, sds,\\  T^E_jf(t,r)&=\int_0^\infty E_{\frac12,j}(t;r,s) f(s)\, sds,
 \end{split}
 \end{equation}
where $f$ is a scalar radial function on $X=C(\Ss)$.
From \eqref{est:F-E-j} we obtain immediately
 \begin{equation}\label{eq:TEj}
\Big\|T^E_jf(x)\Big\|_{L^\infty(X)}
\leq C2^{2j}(1+2^j|t|)^{-1/2} \|f\|_{L^1{(X)}},
\end{equation}
\begin{equation}\label{eq:TFj}
\Big\|\big(1+(2^j r)^{-\frac12}\big)^{-1}T^F_jf(x)\Big\|_{L^\infty(X)}\leq C2^{2j}(1+2^j|t|)^{-1/2} \|\big(1+(2^j r)^{-\frac12}\big)f\|_{L^1{(X)}}.
\end{equation}
Recalling \eqref{equ:FE} and \eqref{equ:F-E-j}, this shows we conclude the proof of
\eqref{est:dis0-1}.
\end{proof}
Therefore, we have shown \eqref{est:P0}.\vspace{0.2cm}

\underline{{\bf Proof of \eqref{est:P0q}.}}
Similar to above, recall $f_j=\tilde{\varphi}(2^{-j}|\D_{\sigma,\gamma}|)f$ and
 \begin{equation*}
 \varphi(2^{-j}|{\mathcal{D}}_{\sigma,\gamma}|)e^{it\mathcal{D}_{\sigma,\gamma}}
 P_0f=\int_{0}^\infty \int_{-\pi}^{\pi}
 \Phi_0(\theta) {\bf K}_{0}^{(j)}(t,r,s)\overline{\Phi_0(\omega)}.
 f_j(s,\omega)  d\omega \;sds
 \end{equation*}
To overcome the difficulty due to Bessel functions of negative order in \eqref{equ:F-E-j}, we use again the scaling argument to reduce the problem in the case $j=0$. Then it suffices to prove the following lemma.
\begin{lemma}\label{lem:Tnuqq'}
  Let $\nu=\pm1/2$ and $T_\nu$ be the operator of the form
  \begin{equation}\label{Tnu-operator}
\begin{split}
(T_{\nu}g)(t,r)=\int_0^\infty  I_{\nu}(t;r,s) g(s)\, sds
 \end{split}
\end{equation}
where $I_{\nu}(t;r,s)$ is defined as \eqref{eq:intI}. Then
 for $2\leq q<4$ we have
  \begin{equation}\label{est:q-q'}
  \|T_{\nu}g\|_{L^q_{rdr}}\le
    c_{\nu}(1+|t|)^{-\frac12(1-\frac 2q)}\|g\|_{L^{q'}_{sds}}.
  \end{equation}
\end{lemma}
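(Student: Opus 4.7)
The strategy is interpolation between an $L^{2}\to L^{2}$ bound (with no $t$-decay) and a dispersive endpoint near $q=4$. The $L^{2}$ endpoint is immediate: writing $T_{\nu}g = \mathcal{H}_{\nu}[e^{it\rho}\phi(\rho)\,\mathcal{H}_{\nu}g]$ and using that $\mathcal{H}_{\nu}$ is an $L^{2}(rdr)$-isometry for all $\nu\geq -\tfrac12$ together with $|e^{it\rho}\phi(\rho)|\leq \|\phi\|_{\infty}$, one obtains $\|T_{\nu}g\|_{L^{2}(rdr)} \leq \|\phi\|_{\infty}\|g\|_{L^{2}(rdr)}$ uniformly in $t$, which is \eqref{est:q-q'} at $q=2$.

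The easy case is $\nu=\tfrac12$, where $\nu_{-}=0$ and Lemma \ref{lem:hankelb} gives the pointwise bound $|I_{1/2}(t;r,s)| \leq c\langle t\rangle^{-1/2}$; hence the $L^{1}\to L^{\infty}$ dispersive estimate $\|T_{1/2}g\|_{L^{\infty}(rdr)} \leq c\langle t\rangle^{-1/2}\|g\|_{L^{1}(sds)}$. Riesz--Thorin interpolation with the uniform $L^{2}$ bound then yields \eqref{est:q-q'} on the full range $q\in[2,\infty]$, with the required decay $\langle t\rangle^{-\frac12(1-2/q)}$.

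The delicate case is $\nu=-\tfrac12$, for which $\nu_{-}=\tfrac12$ and Lemma \ref{lem:hankelb} only gives $|I_{-1/2}(t;r,s)|\leq c\langle t\rangle^{-1/2}(1+r^{-1/2})(1+s^{-1/2})$; since the factor $1+s^{-1/2}$ is not in $L^{\infty}$, the naive $L^{1}\to L^{\infty}$ argument fails. I would instead use the explicit identity $J_{-1/2}(x)=\sqrt{2/(\pi x)}\cos x$ to derive the closed form
\begin{equation*}
I_{-1/2}(t;r,s) = \frac{1}{2\pi\sqrt{rs}} \sum_{\epsilon_{1},\epsilon_{2}\in\{\pm 1\}} \check\phi(t+\epsilon_{1}r+\epsilon_{2}s),
\qquad \check\phi(\alpha):=\int_{0}^{\infty}e^{i\alpha\rho}\phi(\rho)\,d\rho,
\end{equation*}
where $\check\phi$ is Schwartz. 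Setting $F(r):=\sqrt{r}\,T_{-1/2}g(r)$ and $h(s):=\sqrt{s}\,g(s)$ converts $T_{-1/2}$ into a one-dimensional convolution operator, with Schwartz kernel $\eta(\alpha)=\check\phi(t+\alpha)+\check\phi(t-\alpha)$ localized near $\alpha=\pm t$, acting between the weighted spaces $L^{q'}(s^{1-q'/2}ds)$ and $L^{q}(r^{1-q/2}dr)$. The restriction $q<4$ appears exactly as the condition that the weight $r^{1-q/2}$ be locally integrable at the origin.

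The main obstacle is producing the sharp decay rate $\langle t\rangle^{-\frac12(1-2/q)}$ in the $\nu=-\tfrac12$ case. A naive interpolation with the pointwise bound $(1+r^{-1/2})(1+s^{-1/2})$ either fails globally (because $1\notin L^{q}((1,\infty),rdr)$) or overshoots the decay rate. The correct endpoint turns out to be a weak-type Lorentz estimate of the form $L^{4/3,1}\to L^{4,\infty}$ with decay $\langle t\rangle^{-1/4}$, consistent with Remark \ref{rem:q=4}; it is forced by the spatial concentration of $F(r)$ near the light cone $r\approx t$ encoded in the Schwartz localization of $\eta$. Strong $L^{q'}\to L^{q}$ bounds for $q\in[2,4)$ then follow by real interpolation with the $L^{2}\to L^{2}$ endpoint, yielding precisely the exponent $\langle t\rangle^{-\frac12(1-2/q)}$ claimed.
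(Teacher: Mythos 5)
Your treatment of $\nu=1/2$ is correct and in fact slicker than the paper's: since $\nu_-=0$, Lemma \ref{lem:hankelb} gives a uniform $L^1\to L^\infty$ dispersive bound, and Riesz--Thorin interpolation with the trivial $L^2\to L^2$ bound closes the case for all $q\in[2,\infty]$. Your identification of the $q<4$ restriction with local integrability of the weight $r^{1-q/2}$ after conjugating by $\sqrt{r}$ is also the right heuristic, and the rewriting of $I_{-1/2}$ via $J_{-1/2}(x)=\sqrt{2/(\pi x)}\cos x$ is algebraically correct.

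However, for $\nu=-1/2$ your argument has a genuine gap. The entire burden is shifted onto the asserted fixed-time Lorentz endpoint
\begin{equation*}
\|T_{-1/2}g\|_{L^{4,\infty}_{rdr}}\lesssim \langle t\rangle^{-1/4}\|g\|_{L^{4/3,1}_{sds}},
\end{equation*}
which you state is ``forced by the spatial concentration of $F(r)$ near the light cone'' but do not prove. This estimate is not a consequence of anything you have written; the convolution structure with a Schwartz kernel only gives $L^{q'}\to L^q$ bounds \emph{without} time decay via Young's inequality, and extracting the $\langle t\rangle^{-1/4}$ decay while simultaneously absorbing the weight singularity $r^{-1/2}$ at the endpoint $q=4$ is exactly the delicate point the lemma is about. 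Moreover, you cite Remark \ref{rem:q=4} as corroboration, but that remark concerns a \emph{space-time} Strichartz estimate $L^4_t L^{4,\infty}_x$ proved by an entirely different mechanism (Fourier inversion plus $\||x|^{-1/2}\|_{L^{4,\infty}}$), and such a Strichartz bound does not imply the fixed-time dispersive Lorentz bound you need; the implication goes the other way. So the claimed endpoint is both unproved and unsupported by the reference you invoke. By contrast, the paper avoids any such endpoint: it splits the kernel into four pieces by the spatial cutoffs $\chi(r)\chi(s)$, $\chi^c(r)\chi(s)$, $\chi(r)\chi^c(s)$, $\chi^c(r)\chi^c(s)$, handles the near-origin piece by H\"older (this is where $q<4$ enters), the mixed pieces by integration by parts in $\rho$ together with a dyadic decomposition in $|t\pm s|$, and the far piece by interpolating the $L^1\to L^\infty$ bound of Lemma \ref{lem:hankelb} with $L^2\to L^2$. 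If you want to pursue your convolution route you would need to actually prove the Lorentz endpoint (or directly establish the interpolated estimates), and as written that is the missing core of the proof.
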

\begin{proof} We only consider the case $\nu=-1/2$ since the same argument works for $\nu=1/2$.
Let $\chi\in \mathcal{C}_c^\infty ([0,+\infty)$ be defined as
\begin{equation}
\chi(r)=
\begin{cases}1,\quad r\in [0, \frac12],\\
0, \quad r\in [1,+\infty)
\end{cases},\quad \chi^c(r)=1-\chi.
\end{equation}
Then we split the kernel $I_{\nu}(t;r,s)$ as follows:
  \begin{equation}
\begin{split}
 I_{\nu}(t;r,s)=&\chi(r)I_{\nu}(t;r,s)\chi(s)+\chi^c(r)I_{\nu}(t;r,s) \chi(s)\\
&+\chi(r)I_{\nu}(t;r,s)\chi^c(s)+\chi^c(r)I_{\nu}(t;r,s)\chi^c(s)\\
&:=T^1_{\nu}+T^2_{\nu}+T^3_{\nu}+T^4_{\nu}.
 \end{split}
\end{equation}
We estimate the norms $\|T^j_{\nu}g\|_{L^q_{rdr}}$ for $j=1,2,3,4$ separately.

Firstly, for $T^1_\nu$, by Lemma \ref{lem:hankelb}, we have
  \begin{equation*}
\begin{split}
|\chi(r)I_{\nu}(t;r,s)\chi(s)|\leq (1+|t|)^{-1}\chi(r)r^{-\frac12}\chi(s)s^{-\frac12},
 \end{split}
\end{equation*}
by H\"older inequality, as long as $2\leq q<4$, we obtain
  \begin{equation}\label{est:q-q'1}
  \|T^1_{\nu}g\|_{L^q_{rdr}}\le
    c_{\nu}(1+|t|)^{-1}\Big(\int_0^1 r^{-\frac12 q} r dr\Big)^{2/q}\|g\|_{L^{q'}_{sds}}\le
    c_{\nu}(1+|t|)^{-1}\|g\|_{L^{q'}_{sds}}.
  \end{equation}

In view of $T^2_{\nu}$ and $T^3_{\nu}$ are similar, it suffices to estimate $T^3_{\nu}$. We modified the technique used in the proof of Lemma \ref{lem:hankelb} for the Case $r\le 1$, $s\ge 1$ (see \cite[Section 5]{CDYZh}).
By \eqref{eq:bess3} it reduces to estimate the two integrals
  \begin{equation*}
    I_{\pm}=\int_{0}^{\infty}
    \rho \phi(\rho)J_{\nu}(r\rho)(s\rho)^{-1/2}
    e^{i(t\pm s)\rho}a_{\pm}(s\rho)d \rho.
  \end{equation*}
 If $|t|\leq 1$, integrating by parts we get
  \begin{equation*}
   | I_{\pm}|=\Big|\frac{is^{-1/2}}{\pm s}
    \int_{0}^{\infty}
    (\rho^{1/2}\phi(\rho)J_{\nu}(r\rho)a_{\pm}(s\rho)e^{it\rho})'
    e^{\pm i \rho s}d\rho\Big|\leq Cr^{-\frac12}s^{-\frac32} .
  \end{equation*}
 Hence, for $|t|\leq 1$ and $2\leq q<4$ there holds
      \begin{equation}
  \|T^3_{\nu}g\|_{L^q_{rdr}}\le
    c_{\nu}\Big(\int_0^1 r^{-\frac12 q} r dr\Big)^{1/q}\Big(\int_{\frac12}^{+\infty} s^{-\frac{3q}2} s ds\Big)^{1/q}\|g\|_{L^{q'}_{sds}}\le
    c_{\nu}\|g\|_{L^{q'}_{sds}}.
  \end{equation}
Now we consider the case $|t|\geq 1$. For $j\in\Z$,  if $|t\pm s|\leq 2^j|t|\leq 1$, we have
  \begin{equation*}
    |I_{\pm}|\le C
    (1+r^{-\frac12})s^{-1/2}.
  \end{equation*}
For $j\in\Z$,  if $|t\pm r|\sim 2^j|t|\geq 1$, using by integration by parts we get
 \begin{equation*}
    |I_{\pm}|\le C \frac{r^{-\frac12}s^{-1/2}}
      {|t\pm s|}.
  \end{equation*}
Therefore, we arrive at
  \begin{equation}\label{est:q-q'2}
    \begin{split}
  &\|T^3_{\nu}g\|^q_{L^q_{rdr}}
  =\int_0^1\Big|\sum_{j\in\Z}\int_{2^{j-1}|t|\leq|t\pm r_2|\leq 2^{j}|t|}\chi(r_1)I_{\nu}(t;r,s)\chi^c(s) g(s) s\, ds\Big|^q r\, dr\\
   &\leq\int_0^1\Big(\sum_{\{j\in\Z: 2^j|t|\leq 1/10\}}\int_{2^{j-1}|t|\leq|t\pm s|\leq 2^{j}|t|} r^{-\frac12}s^{-1/2}| g(s)| s\, ds \\&\quad+\sum_{\{j\in\Z: 2^j|t|\geq 1/10\}}\int_{2^{j-1}|t|\leq|t\pm s|\leq 2^{j}|t|} r^{-\frac12} s^{-1/2}|t\pm s|^{-1} |g(s)| s\, ds\Big)^q r\, dr.
    \end{split}
  \end{equation}
Observe that $|t|\geq 1$ and $|t\pm s|\leq 1/10$ implies $s\sim |t|$. thus the first term of \eqref{est:q-q'2}
  \begin{equation*}
  \begin{split}
  &\sum_{\{j\in\Z: 2^j|t|\leq 1/10\}}\int_{2^{j-1}|t|\leq|t\pm s|\leq 2^{j}|t|} s^{-1/2}| g(s)| s\, ds\\
  &\quad\leq \Big(\int_{|t\pm s|\leq 1/10} s^{-q/2} s\, ds\Big)^{\frac1q} \|g\|_{L^{q'}_{sds}}\leq C|t|^{\frac1q-\frac12}\|g\|_{L^{q'}_{sds}}.
  \end{split}
  \end{equation*}
For the second term we have
  \begin{equation*}
  \begin{split}
  &\sum_{\{j\in\Z: 2^j|t|\geq 1/10\}}\int_{2^{j-1}|t|\leq|t\pm s|\leq 2^{j}|t|} s^{-1/2}|t\pm s|^{-1}| g(s)| s\, ds\\
  &\leq \sum_{\{j\in\Z: 2^j|t|\geq 1/10\}}(2^j|t|)^{-1}\int_{2^{j-1}|t|\leq|t\pm s|\leq 2^{j}|t|} s^{-1/2}| g(s)| s\, ds
  \end{split}
  \end{equation*}
and by H\"older inequality
 \begin{equation*}
 \leq \Big(\sum_{\{j\in\Z: 2^j|t|\geq 1/10\}}(2^j|t|)^{-q} \int_{2^{j-1}|t|\leq|t\pm s|\leq 2^{j}|t|} s^{1-\frac q2} \, ds\Big)^{\frac1q} \|g\|_{L^{q'}_{sds}}.
 \end{equation*}
Since $\frac1{20}\leq 2^{j-1}|t|\leq|t\pm s|\leq 2^{j}|t|$ implies that $|t|\leq s\leq 2^{j+1}|t|$ for $j\geq 0$, one has
   \begin{equation*}
    \begin{split}
&\sum_{\{j\in\Z: 2^j|t|\geq 1/10\}}(2^j|t|)^{-q} \int_{2^{j-1}|t|\leq|t\pm s|\leq 2^{j}|t|} s^{1-\frac q2} \, ds\\
&\leq \sum_{\{j\geq 0: 2^j|t|\geq 1/10\}}(2^j|t|)^{-q} \max\{|t|^{2-\frac q2}, (2^j|t|)^{2-\frac q2}\}\leq C|t|^{2-\frac{3q}2}.
    \end{split}
  \end{equation*}
Thus for $|t|\geq1$ it holds
  \begin{equation*}
    \begin{split}
  &\|T^3_{\nu}g\|_{L^q_{rdr}}\leq C \big(|t|^{\frac1q-\frac{1}2}+|t|^{\frac2q-\frac{3}2}\big)\int_0^1 r^{-\frac12 q} rdr \|g\|_{L^{q'}_{sds}}\leq C |t|^{\frac1q-\frac{1}2}\|g\|_{L^{q'}_{sds}}.
    \end{split}
  \end{equation*}
Summing up, for all $|t|$ we get
 \begin{equation}
 \|T^3_{\nu}g\|_{L^q_{rdr}}\leq C (1+|t|)^{\frac1q-\frac{1}2}\|g\|_{L^{q'}_{sds}}.
 \end{equation}
We finally deal with $T^4_{\nu}$. By Lemma \ref{lem:hankelb}, one has
 \begin{equation*}
  \|T^4_{\nu}g\|_{L^\infty_{rdr}}\le
    c_{\nu}(1+|t|)^{-\frac12}\|g\|_{L^{1}_{sds}}.
  \end{equation*}
Combine with the fact the Hankel transform is unitary on $L^2_{rdr}$, we have
     \begin{equation*}
  \|T^4_{\nu}g\|_{L^2_{rdr}}\le   \|\mathcal{H}_\nu e^{it\rho}\phi(\rho) \mathcal{H}_\nu e^{it\rho} g\|_{L^2_{r dr}}\leq C \|g\|_{L^{2}_{sds}}.
  \end{equation*}
Using the interpolation theorem, we obtain
      \begin{equation}\label{est:q-q'4}
    \begin{split}
  &\|T^4_{\nu}g\|_{L^q_{rdr}}\leq C (1+|t|)^{-\frac12(1-\frac2q)}\|g\|_{L^{q'}_{sds}}.
    \end{split}
  \end{equation}
In conclusion we obtain \eqref{est:q-q'} from the above estimates and conclude the proof.
\end{proof}

\section{Strichartz estimates}\label{sec:stri}
In this section, we focus on the proof of Strichartz estimates for the Dirac equation in the cosmic string space-time. We deal with the two components $P_{\bot}f$ and $P_0f$ separately as before.
We shall prove Theorems \ref{th-stri1} and \ref{th-stri3} using a
variant of the abstract Keel--Tao argument and the dispersive estimates proved in the previous sections.

\subsection{Proof of Theorem \ref{th-stri1}}

Our proof is on the basic of the abstract Keel-Tao argument \cite{KT}, we give a invariant version proved in \cite{Zhang2}:

\begin{proposition}\label{prop:semi}
Let $(X,\mathcal{M},\mu)$ be a $\eta$-finite measured space and
$U:\mathbb{R}\rightarrow B(L^2(X,\mathcal{M},\mu))$ be a weakly
measurable map satisfying, for some constants $C$, $\kappa\geq0$,
$\eta, h>0$,
\begin{equation}\label{md}
\begin{split}
\|U(t)\|_{L^2\rightarrow L^2}&\leq C,\quad t\in \mathbb{R},\\
\|U(t)U(\tau)^*f\|_{L^\infty}&\leq
Ch^{-\kappa}(h+|t-\tau|)^{-\eta}\|f\|_{L^1}.
\end{split}
\end{equation}
Then for every pair $q,p\in[2,\infty]$ such that $(p,q,\eta)\neq
(2,\infty,1)$ and
\begin{equation*}
\frac{1}{p}+\frac{\eta}{q}\leq\frac\eta 2,\quad p\ge2,
\end{equation*}
there exists a constant $\tilde{C}$ only depending on $C$, $\eta$,
$q$ and $p$ such that
\begin{equation*}
\Big(\int_{\mathbb{R}}\|U(t) u_0\|_{L^q}^p dt\Big)^{\frac1p}\leq \tilde{C}
\Lambda(h)\|u_0\|_{L^2}
\end{equation*}
where $\Lambda(h)=h^{-(\kappa+\eta)(\frac12-\frac1q)+\frac1p}$.
\end{proposition}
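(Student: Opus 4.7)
My plan is to execute the standard Keel--Tao $TT^{*}$ argument, keeping careful track of the semiclassical scale $h$. Setting $Tu_{0}(t)=U(t)u_{0}$, the desired Strichartz bound is equivalent by $TT^{*}$ duality to the bilinear estimate
\begin{equation*}
\Bigl|\iint_{\mathbb{R}^{2}} \langle U(t)U(s)^{*}F(s),G(t)\rangle_{L^{2}_{x}} \,ds\,dt\Bigr| \lesssim \Lambda(h)^{2}\,\|F\|_{L^{p'}_{t}L^{q'}_{x}}\|G\|_{L^{p'}_{t}L^{q'}_{x}}.
\end{equation*}
Riesz--Thorin interpolation between the hypothesized $L^{2}\to L^{2}$ bound and the dispersive $L^{1}\to L^{\infty}$ bound immediately gives, for every $q\in[2,\infty]$,
\begin{equation*}
\|U(t)U(s)^{*}\|_{L^{q'}\to L^{q}} \lesssim h^{-\kappa(1-\frac{2}{q})}\bigl(h+|t-s|\bigr)^{-\eta(1-\frac{2}{q})}.
\end{equation*}

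Setting $\alpha:=\eta(1-2/q)$, the admissibility condition $1/p+\eta/q\le\eta/2$ rewrites as $2/p\le\alpha$. In the non-endpoint case $2/p<\alpha$, I would apply Minkowski in $x$ followed by Young's inequality in $t$ with exponent $r=p/2$; since $\alpha r>1$, a direct computation yields $\|(h+|\cdot|)^{-\alpha}\|_{L^{r}_{t}}\sim h^{1/r-\alpha}=h^{2/p-\alpha}$, and combining with the factor $h^{-\kappa(1-2/q)}$ from the interpolation step produces
\begin{equation*}
\|TT^{*}F\|_{L^{p}_{t}L^{q}_{x}} \lesssim h^{-2(\kappa+\eta)(1/2-1/q)+2/p}\,\|F\|_{L^{p'}_{t}L^{q'}_{x}} = \Lambda(h)^{2}\,\|F\|_{L^{p'}_{t}L^{q'}_{x}},
\end{equation*}
which is exactly what is required.

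For the endpoint $2/p=\alpha$ Young's inequality just fails (the kernel then lies only in weak-$L^{r}$ with $r=p/2$), and one must invoke the dyadic decomposition and real-interpolation argument of \cite{KT}: split $|t-s|\sim 2^{j}$, apply the bilinear bound of the previous step on each shell, and resum using an atomic decomposition in the Lorentz spaces $L^{p',2}_{t}L^{q'}_{x}$. The excluded triple $(p,q,\eta)=(2,\infty,1)$ corresponds precisely to the degenerate case $r=1$, $\alpha=1$ where this scheme breaks down. The correct $h$-power is recovered either by repeating the power-counting on each dyadic shell, or equivalently by rescaling $t\mapsto ht$ to reduce to the case $h=1$ and multiplying by the factor $h^{1/p}$ from the time norm. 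The main obstacle is exactly this endpoint, since it is the only point where Young's inequality is insufficient; the interior of the admissible range is routine and the bookkeeping of $h$ is essentially the same as in the unweighted Keel--Tao argument.
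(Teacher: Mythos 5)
The paper does not prove this proposition itself --- it is quoted directly from \cite{Zhang2} --- so there is no in-paper argument to compare against, but your sketch reproduces the expected proof correctly. The sequence you identify (Riesz--Thorin interpolation of the two hypotheses to a fixed-time $L^{q'}\to L^{q}$ bound with constant $h^{-\kappa(1-2/q)}(h+|t-s|)^{-\alpha}$, where $\alpha=\eta(1-2/q)$; then $TT^{*}$ duality and Young's inequality in $t$ in the strict case $2/p<\alpha$, with $\|(h+|\cdot|)^{-\alpha}\|_{L^{p/2}_{t}}\sim h^{2/p-\alpha}$) combines to give exactly $\Lambda(h)^{2}=h^{-2(\kappa+\eta)(1/2-1/q)+2/p}$, as you compute. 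One small imprecision is at the edge $2/p=\alpha$: when $p>2$ you do not actually need the full Keel--Tao dyadic/real-interpolation machinery, since the weak-type Young (Hardy--Littlewood--Sobolev) inequality $\|K*g\|_{L^{p}}\lesssim\|K\|_{L^{p/2,\infty}}\|g\|_{L^{p'}}$ closes the estimate directly, $\|(h+|\cdot|)^{-\alpha}\|_{L^{p/2,\infty}}$ being bounded uniformly in $h$, and the surviving power $h^{-\kappa(1-2/q)}$ is precisely $\Lambda(h)^{2}$ on that line; the genuine Keel--Tao endpoint argument is required only at $p=2$ (so $r=1$, $\alpha=1$), which is exactly why the triple $(p,q,\eta)=(2,\infty,1)$ must be excluded. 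Your alternative rescaling route $t\mapsto ht$ is also valid, provided one notes that the rescaled map satisfies the dispersive bound with constant $Ch^{-\kappa-\eta}$ at scale $1$, and that this constant enters the $h=1$ Strichartz estimate with weight $(1-2/q)/2$ through $TT^{*}$; combining with the $h^{1/p}$ Jacobian recovers $\Lambda(h)$.
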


Recalling \eqref{m:proH}
 \begin{align*}
 e^{it\D_{\sigma,\gamma}}P_{\bot}=e^{it\sqrt{H}}P_{\bot},
 \end{align*}
the estimate \eqref{stri-D1} follows from
 \begin{equation}\label{stri-D1-1}
 \|e^{-it \sqrt{H}}f\|_{[L^p_t(\R; L^q_x(X))]^2}\leq
 C\| f\|_{\dot{\mathbf{H}}^{s}}.
 \end{equation}
with $p,q, s$ as in Theorem \ref{th-stri1}.

Let $f_j=\varphi(2^{-j}\sqrt{H})f$. As a consequence of the square function estimates \eqref{est:LP} and the Minkowski inequality, we obtain
 \begin{equation}\label{LP}
 \|e^{-it\sqrt{H}}f\|_{[L^p(\R;L^q(X))]^2}\lesssim
 \Big(\sum_{j\in\Z}\|e^{-it\sqrt{H}}f_j\|^2_{[L^p(\R;L^q(X))]^2}\Big)^{\frac12}.
 \end{equation}
Taking cutoffs $\tilde{\varphi}$ satisfying  $\tilde{\varphi}\varphi=\varphi$, we write the propagator as
$$e^{-it \sqrt{H} }f_j=
U_{j}(t)\tilde{f}_{j},
\qquad
U_{j}(t):=\varphi(2^{-j}\sqrt{H})e^{-it \sqrt{H} },
\qquad
\tilde{f}_j=\tilde{\varphi}(2^{-j}\sqrt{H})f.$$
 Let $g=\tilde{f}_j$, by the spectral theorem we know $U_j(t)$ is bounded on $L^2$:
\begin{equation*}
\|U_j(t)g\|_{[L^2(X)]^2}\leq C\|g\|_{[L^2(X)]^2}.
\end{equation*}
On the other hand, the dispersive estimate \eqref{est:Pk} yields
\begin{equation*}
\|U_j(t)U_j^*(\tau)g\|_{[L^\infty (X)]^2}=\|U_j(t-\tau)g\|_{[L^\infty (X)]^2}\leq C 2^{\frac32 j}\big(2^{-j}+|t-\tau|\big)^{-\frac12}\|g\|_{[L^1(X)]^2}.
\end{equation*}
Then assumptions \eqref{md} are valid
for $U_{j}(t)$ with $\kappa=3/2$, $\eta=1/2$ and
$h=2^{-j}$, and applying the Proposition \ref{prop:semi} we obtain
\begin{equation*}
\|U_j(t)g\|_{[L^p(I;L^q(X))]^2}\leq C
2^{j[2(\frac12-\frac1p)-\frac1q]} \|g\|_{[L^2(X)]^2}
\end{equation*}
which implies \eqref{stri-D1-1}
\begin{equation}\label{LP}
\|e^{-it\sqrt{H}}f\|_{[L^p(\R;L^q(X))]^2}\lesssim
\Big(\sum_{j\in\Z}2^{2js} \|\tilde{\varphi}_j(\sqrt{H})f\|^2_{[L^2(X)]^2}\Big)^{\frac12}
\end{equation}
where $s=2(\frac12-\frac1q)-\frac1p$. Therefore,  we finish the proof of Theorem \ref{th-stri1}.

\subsection{Proof of Theorem \ref{th-stri3}}\label{subs-th3}
We first prove the Strichartz estimates \eqref{stri-D3} when $2\leq q<4$ and $\frac2p+\frac1q\leq \frac12$ by using Keel-Tao's argument. Then, by proving \eqref{stri-D3} at $(p,q)=(4+2\epsilon,4-\epsilon)$ with $0<\epsilon\ll1$ (which is close to the corner point $F (4,4)$) and by interpolation, we extend $\frac2p+\frac1q\leq \frac12$ to $\frac1p+\frac1q < \frac12$.

In order to achieve the first goal we shall need a second variant of Keel--Tao's argument, proved in \cite{CDYZh}.
This is because that for the radial component, we only have weightless dispersive estimates \eqref{est:P0q} for $2\leq q<4$.

\begin{proposition}\label{prop:semi-1}
Let $(X,\mathcal{M},\mu)$ be a $\sigma$-finite measured space and
$U: \mathbb{R}\rightarrow B(L^2(X,\mathcal{M},\mu))$ be a weakly
measurable map satisfying, for some constants $C$, $\kappa\geq0$,
$\eta, h>0$,
\begin{equation}\label{md-1}
\begin{split}
\|U(t)\|_{L^2\rightarrow L^2}&\leq C,\quad t\in \mathbb{R},\\
\|U(t)U(\tau)^*f\|_{L^{q_0}}&\leq
Ch^{-\kappa(1-\frac2{q_0})}(h+|t-\tau|)^{-\eta(1-\frac2{q_0})}\|f\|_{L^{q_0'}},\quad 2\leq q_0\leq +\infty.
\end{split}
\end{equation}
Then for every pair $q,p\in[1,\infty]$ such that $(p, q,\eta)\neq
(2,\infty,1)$ and
\begin{equation*}
\frac{1}{p}+\frac{\eta}{q}\leq\frac\eta 2,\quad 2\leq q\leq q_0, p\geq 2.
\end{equation*}
there exists a constant $\tilde{C}$ depending only on $C$, $\eta$,
$q$ and $p$ such that
\begin{equation*}
\Big(\int_{\mathbb{R}}\|U(t) u_0\|_{L^q}^p dt\Big)^{\frac1p}\leq \tilde{C}
\Lambda(h)\|u_0\|_{L^2}
\end{equation*}
where $\Lambda(h)=h^{-(\kappa+\eta)(\frac12-\frac1q)+\frac1p}$.
\end{proposition}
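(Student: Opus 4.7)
The strategy is the classical Keel--Tao machine of \cite{KT}, adapted to the fact that here the ``dispersive'' hypothesis is an $L^{q_0'} \to L^{q_0}$ bound with finite $q_0$ rather than the usual $L^1 \to L^\infty$ bound. The plan consists of three steps: (i) promote the hypothesis to a whole scale of $L^{q'} \to L^q$ decay estimates for $2\le q\le q_0$ by complex interpolation; (ii) reduce the desired Strichartz estimate to a bilinear $TT^{*}$ bound by duality; (iii) bound the bilinear form by Young's inequality in the time variable.

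For step (i), I would combine the $L^2\to L^2$ bound $\|U(t)U(\tau)^*\|\le C^2$ (obtained by composing the two halves of the first hypothesis) with the $L^{q_0'}\to L^{q_0}$ bound in the second hypothesis via Riesz--Thorin. Writing $\tfrac{1}{q}=\tfrac{1-\theta}{2}+\tfrac{\theta}{q_0}$ gives $\theta=(1-2/q)/(1-2/q_0)$, and the resulting interpolated estimate is
\begin{equation*}
\|U(t)U(\tau)^*f\|_{L^q}\le C\,h^{-\kappa(1-\frac{2}{q})}(h+|t-\tau|)^{-\eta(1-\frac{2}{q})}\|f\|_{L^{q'}},\qquad 2\le q\le q_0.
\end{equation*}
This is exactly the kind of dispersive decay the usual Keel--Tao argument expects, with the decay exponent $\eta$ effectively replaced by $\eta(1-2/q)$ and the scaling factor $\kappa(1-2/q)$.

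For step (ii), by duality and the $TT^{*}$ identity it is enough to prove
\begin{equation*}
\Bigl\|\int U(t)U(s)^{*}F(s)\,ds\Bigr\|_{L^{p}_{t}L^{q}_{x}}\le \tilde C\,\Lambda(h)^{2}\|F\|_{L^{p'}_{t}L^{q'}_{x}}.
\end{equation*}
Inserting the estimate from step (i) inside Minkowski gives
\begin{equation*}
\Bigl\|\int U(t)U(s)^{*}F(s)\,ds\Bigr\|_{L^{q}_{x}}\le C\,h^{-\kappa(1-\frac{2}{q})}\int (h+|t-s|)^{-\eta(1-\frac{2}{q})}\|F(s)\|_{L^{q'}_{x}}\,ds,
\end{equation*}
so the problem collapses to a scalar convolution estimate in $t$. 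For step (iii), I would apply Young's inequality $L^{p'}*L^{r}\hookrightarrow L^{p}$ with $1/r=2/p$ to the kernel $(h+|\cdot|)^{-\eta(1-2/q)}$; since this kernel lies in $L^{r}$ precisely when $\eta(1-2/q)\cdot p/2>1$, that is, when the scaling inequality $1/p+\eta/q<\eta/2$ is strict, one obtains $\|(h+|t|)^{-\eta(1-2/q)}\|_{L^{r}_{t}}=C h^{-\eta(1-2/q)+2/p}$, and combining the two $h$-factors gives exactly $\Lambda(h)^{2}=h^{-(\kappa+\eta)(1-2/q)+2/p}$, i.e., the stated $\Lambda(h)=h^{-(\kappa+\eta)(1/2-1/q)+1/p}$.

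The main obstacle is the sharp-line case $1/p+\eta/q=\eta/2$, where Young's inequality with a power-type kernel fails and one must use the Lorentz-space refinement $L^{p',1}*L^{r,\infty}\hookrightarrow L^{p}$ together with the dyadic decomposition in $|t-s|$ that is the heart of the Keel--Tao endpoint argument. The excluded triple $(p,q,\eta)=(2,\infty,1)$ corresponds exactly to the situation where this Lorentz trick is known to fail; away from it, the dyadic sum converges because the sharp-admissible line forces the relevant geometric series to be summable. Since the statement explicitly rules out that one bad triple, the endpoint argument of \cite{KT} transfers without change once the interpolated dispersive estimate of step (i) is substituted for the standard one.
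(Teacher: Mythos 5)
The paper itself does not prove Proposition \ref{prop:semi-1}; it cites \cite{CDYZh} for the proof, so there is no internal argument to compare against. Your sketch is the standard adaptation of Keel--Tao to a frequency-localized $L^{q_0'}\to L^{q_0}$ dispersive hypothesis, and it is almost certainly the route the reference takes. The bookkeeping checks out: the Riesz--Thorin step, writing $\tfrac1q=\tfrac{1-\theta}{2}+\tfrac{\theta}{q_0}$ so that $\theta(1-2/q_0)=1-2/q$, correctly produces the decay $h^{-\kappa(1-2/q)}(h+|t-\tau|)^{-\eta(1-2/q)}$ on $L^{q'}\to L^{q}$; the $TT^*$ reduction is right; and the Young (strict subcritical) plus weak-Young/HLS (sharp line, $p>2$) computation of $\|(h+|\cdot|)^{-\eta(1-2/q)}\|_{L^{p/2}}$ gives precisely $\Lambda(h)^2=h^{-(\kappa+\eta)(1-2/q)+2/p}$, and the weak-Young exponent $r=p/2>1$ fails only at the excluded triple.

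The one place you are too quick is the assertion that ``the endpoint argument of \cite{KT} transfers without change.'' In Keel--Tao the bilinear endpoint lemma needs the dispersive bound $\|U(t)U(s)^*\|_{L^{a'}\to L^{b}}$ for a range of \emph{off-diagonal} pairs $(a,b)$ near the endpoint, which in \cite{KT} comes from interpolating the $L^1\to L^\infty$ and $L^2\to L^2$ bounds. Here you only have the diagonal $L^{q'}\to L^q$ family, and bilinear complex interpolation of diagonal estimates stays diagonal. One can recover the needed off-diagonal ingredients (e.g.\ by first deducing $U(t)^*:L^{q'}\to L^2$ via $TT^*$ at $s=t$, bounding $T_j$ in two asymmetric ways, and then real-interpolating), but this is a genuine extra step, not an automatic transfer. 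This gap is harmless for the applications in the present paper, where $\eta=\tfrac12$ forces $p\geq 4$ so the $p=2$ endpoint never occurs, but it should be flagged if the proposition is to be used in the full stated generality.
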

To use the frequency-localized dispersive estimates to prove the Strichartz estimates,
we thus require the Littlewood-Paley theory for the radial component in order to eliminate the frequency localization. Usually the square function estimates can be obtained by the Gaussian upper bounds of the heat kernel. However, the singularity of Bessel functions of negative order prevents us from proving the Gaussian upper bounds for $e^{-t\D^2_{\sigma,\gamma}}P_0$. We thereby have to directly establish the the multiplier estimates in $L^{p}$ for the Littlewood-Paley operator in the case $\frac43<p<4$, which is inspired by the previous analysis in Lemma \ref{lem:Tnuqq'}. For our purpose, it is enough to prove the Strichartz estimates with $2\leq q<4$ because of $[2,4)\subset(\frac43,4)$.


\begin{proposition}[Square function inequality]
  \label{prop:squarefun0}
  Let $\varphi\in C_c^\infty$, with support in $[1/2,1]$
  and such that $0\leq\varphi\leq 1$ and
  \begin{equation}\label{LP-dp}
    \sum_{j\in\Z}\varphi(2^{-j}\lambda)=1
    \qquad\text{where}\qquad
    \varphi_j(\lambda):=\varphi(2^{-j}\lambda).
  \end{equation}
  Then for all $\frac34<p<4$
  there exist constants $c_p$ and $C_p$ depending on $p$ such that
  \begin{equation}\label{square1-0}
  c_p\|P_0 f\|_{[L^p(X)]^2}\leq
  \Big\|\Big(\sum_{j\in\Z}|\varphi_j(|\mathcal{D}_{A,\gamma}|) P_0f|^2\Big)^{\frac12}\Big\|_{[L^p(X)]^2}\leq
  C_p\|P_0f\|_{[L^p(X)]^2}.
  \end{equation}
\end{proposition}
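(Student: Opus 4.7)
The plan is to diagonalize $\varphi_j(|\mathcal{D}_{\sigma,\gamma}|)P_0$ using Proposition \ref{pro:hankel}. Since on the $k=0$ sector the relativistic Hankel transform conjugates the Dirac operator to multiplication by $\rho$, the spectral cutoff acts (on the radial coefficient) as a diagonal matrix whose entries are the scalar Hankel multipliers
\[
M_\nu^{(j)}\phi(r)=\int_0^\infty K_\nu^{(j)}(r,s)\phi(s)\,s\,ds,\quad K_\nu^{(j)}(r,s)=\int_0^\infty J_\nu(r\rho)J_\nu(s\rho)\varphi(2^{-j}\rho)\,\rho\,d\rho,
\]
with $\nu\in\{+1/2,-1/2\}$, the exact placement on the diagonal depending on whether $\sin\gamma=0$ or $\cos\gamma=0$. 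Thus the matrix square-function inequality reduces to the conjunction of two scalar square-function inequalities on $L^p((0,\infty),r\,dr)$, one per Hankel index.

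For $\nu=+1/2$ the operator $\mathcal{H}_{1/2}\rho^2\mathcal{H}_{1/2}$ is essentially the Dirichlet Laplacian on the radial half-line, whose heat kernel enjoys the usual Gaussian upper bound. The argument of Proposition \ref{prop:LP} then applies verbatim and gives the two-sided square-function bound for $M_{1/2}^{(j)}$ on the full range $1<p<\infty$, with no restriction beyond what is classical.

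The real difficulty is the case $\nu=-1/2$, where $J_{-1/2}(x)=\sqrt{2/(\pi x)}\cos x$ has an $x^{-1/2}$ singularity at the origin, so Gaussian heat kernel bounds for $\mathcal{H}_{-1/2}\rho^2\mathcal{H}_{-1/2}$ fail. My plan is to use this explicit formula to write
\[
K_{-1/2}^{(j)}(r,s)=\frac{1}{\pi\sqrt{rs}}\int_0^\infty\bigl[\cos((r-s)\rho)+\cos((r+s)\rho)\bigr]\varphi(2^{-j}\rho)\,d\rho,
\]
and then conjugate by the isometry $U\colon L^2(r\,dr)\to L^2(dr)$, $U\phi(r)=\sqrt{r}\,\phi(r)$. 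A direct computation shows that $U M_{-1/2}^{(j)}U^{-1}$ is, modulo even/odd reflection, precisely a standard Euclidean Littlewood--Paley projection in the variable $r\in\R$ after extending functions evenly across $0$. Consequently
\[
\Bigl\|\Bigl(\sum_j|M_{-1/2}^{(j)}\phi|^2\Bigr)^{1/2}\Bigr\|_{L^p(r\,dr)}^p=\int_0^\infty\Bigl(\sum_j|T_j\psi(r)|^2\Bigr)^{p/2}r^{1-p/2}\,dr,
\]
with $\psi=U\phi$ and $T_j$ the standard dyadic projector on $\R$.

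The final step, which I expect to be the main technical obstacle, is an appeal to the classical weighted Littlewood--Paley inequality: for $w\in A_p(\R)$, the square function $(\sum_j|T_j\cdot|^2)^{1/2}$ is two-sidedly equivalent to the identity on $L^p(w\,dr)$. For the weight $w(r)=|r|^{1-p/2}$ the Muckenhoupt condition becomes $-1<1-p/2<p-1$, i.e.\ $4/3<p<4$, which matches the range asserted in the proposition (the statement's ``$\tfrac34<p<4$'' should be read as $\tfrac43<p<4$, consistent with the later remark $[2,4)\subset(\tfrac43,4)$). Both directions of the square-function estimate then follow by standard Rademacher randomization, and combining the scalar bounds for $\nu=\pm 1/2$ yields \eqref{square1-0}. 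The subtleties I expect to check carefully are the boundary behaviour at $r=0$ under even reflection and the justification that no spurious boundary terms appear when transferring the half-line cosine transform to the Fourier transform on even functions of $\R$, so that the weighted $A_p$ Littlewood--Paley machinery can be applied as a black box.
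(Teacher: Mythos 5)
Your proposal is correct in substance and takes a genuinely different route from the paper's. The paper proceeds by Stein's Rademacher randomization to reduce \eqref{square1-0} to the uniform $L^p$-boundedness of the single multiplier $\varphi_j(|\mathcal{D}_{\sigma,\gamma}|)P_0$, and then establishes that boundedness by a direct kernel estimate: the kernel $LP_{\nu,j}(r,s)=\int_0^\infty\varphi_j(\rho)J_\nu(r\rho)J_\nu(s\rho)\rho\,d\rho$ is split into four pieces $\chi(r)\chi(s)$, $\chi^c(r)\chi(s)$, $\chi(r)\chi^c(s)$, $\chi^c(r)\chi^c(s)$, and each piece is handled by H\"older (using the Bessel bound $|J_\nu(x)|\lesssim|x|^\nu$), integration by parts on the oscillatory factor from \eqref{eq:bess3}, or the Schur test. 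The restriction $\frac43<p<4$ comes out of the H\"older integrability of $r^{\nu p}r\,dr$ near zero with $\nu=-\frac12$. Your plan instead conjugates by $U\phi=\sqrt{r}\phi$, which for $\nu=-\frac12$ converts the $k=0$ Hankel multiplier exactly into the Euclidean Littlewood--Paley projection $T_j$ acting on the even extension, and converts $L^p(r\,dr)$ into $L^p(|r|^{1-p/2}dr)$; the required two-sided square-function bound then becomes a black-box application of the weighted Littlewood--Paley theorem for $A_p(\R)$ power weights, whose validity range $-1<1-\frac p2<p-1$ is precisely $\frac43<p<4$. This is more conceptual and explains cleanly why the threshold is $4$ (and correctly identifies the ``$\frac34$'' in the statement as a typo for ``$\frac43$''), at the cost of relying on the weighted Littlewood--Paley theorem rather than proving the necessary kernel bounds from scratch.

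Two small remarks. First, your treatment of the $\nu=+\frac12$ component is slightly loose: after the same $U$-conjugation one obtains the sine transform with odd extension, which via the weighted Littlewood--Paley route again only gives $\frac43<p<4$; the full range $1<p<\infty$ for this component really comes from the Gaussian heat kernel bound for the Bessel operator $H_{1/2}=-\partial_r^2-\frac1r\partial_r+\frac{1}{4r^2}$ on $L^2(r\,dr)$ (a space of homogeneous type), not from the argument of Proposition \ref{prop:LP} \emph{verbatim}, which concerns a different operator. Since the final range is limited to $\frac43<p<4$ by the $\nu=-\frac12$ component anyway, this does not affect the conclusion. Second, the boundary subtleties you flag at $r=0$ under even/odd reflection are genuinely harmless here because the multiplier $\varphi_j(\rho)$ is supported away from $\rho=0$, so the cosine/sine transforms agree with the full-line Fourier transform on the relevant class without any boundary contributions; it would be worth spelling this out.
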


\begin{remark}We stress that the square function inequality \eqref{square1-0} is valid only for $\frac34<p<4$ which is different from the previous one \eqref{est:LP}.
\end{remark}

\begin{proof} Using by the Rademacher functions and Stein's argument in \cite[Appendix D]{Stein},
we need to prove the Littlewood-Paley operator $\varphi_j(|\mathcal{D}_{\sigma,\gamma}|)P_0$ is bounded on $L^p(X)$ in the range $\frac43<p<4$. We only consider the case $\nu=-\frac12$, since the case $\nu=\frac12$ can be proved in a similar way and we omit the details.

Comparing with \eqref{equ:FE} and \eqref{equ:F-E-j}, we have
\begin{equation*}
  \varphi_j(|\mathcal{D}_{\sigma,\gamma}|)P_0 f=
  \int_{0}^{\infty}K_0(r,s)f(s)sds
\end{equation*}
where the kernel $K_0(r,s)$ can be explicitly written as
 \begin{equation}
 K_0(r,s)=\begin{cases}
 \begin{pmatrix}LP_{-\frac12,j}&0\\0&LP_{\frac12,j}\end{pmatrix}, &\text{if}\;\sin\gamma=0,\\
 \begin{pmatrix}LP_{\frac12,j}&0\\0&LP_{-\frac12,j}\end{pmatrix}, &\text{if}\;\cos\gamma=0,
 \end{cases}
 \end{equation}
where
 \begin{align}
 LP_{\nu,j}=\int^\infty_0\varphi_j(\rho)J_\nu(r\rho)J_\nu(s\rho)\rho d\rho,\quad \nu=-\frac12.
 \end{align}
By scaling $LP_{\nu,j}=2^{2j}LP_{\nu,0}$, it suffices to consider $LP_{\nu,0}$. Define the operator $\mathcal{T}_{\nu}$ as
 \begin{align}
 (\mathcal{T}_{\nu}g)(t,r)=\int^\infty_0 LP_{\nu,0}g(s)sds.
 \end{align}
From \eqref{eq:bess1}, one has
 \begin{align}
 |LP_{\nu,0}|=|\int^\infty_0\varphi(\rho)J_\nu(r\rho)J_{\nu}(s\rho)\rho d\rho|\leq C(1+r^{-\nu_-})(1+s^{-\nu_-}).
 \end{align}
Let $\chi\in \mathcal{C}_c^\infty ([0,+\infty)$ be
\begin{equation}
\chi(r)=
\begin{cases}1,\quad r\in [0, \frac12],\\
0, \quad r\in [1,+\infty)
\end{cases},\quad \chi^c(r)=1-\chi.
\end{equation}
Then we split the kernel $I_{\nu}(t;r,s)$ into four terms as follows:
  \begin{equation}
\begin{split}
 K_0(r,s)=&\chi(r)K_0(r,s)\chi(s)+\chi^c(r)K_0(r,s) \chi(s)\\
&+\chi(r)K_0(r,s)\chi^c(s)+\chi^c(r)K_0(r,s)\chi^c(s).
 \end{split}
\end{equation}
We thus estimate the corresponding norms $\|\mathcal{T}^j_{\nu}g\|_{L^p_{rdr}}$ for $j=1,2,3,4$ separately.

\underline{\bf Term $\mathcal{T}^1_{\nu}$}. We have that
\begin{equation}
 \begin{split}
 |\chi(r)K_{0}(r,s)\chi(s)|\leq \chi(r)r^{\nu}\chi(s)s^{\nu};
 \end{split}
\end{equation}
hence, as long as $1<p, p'<-\frac2\nu=4$
we obtain by H\"older inequality
  \begin{equation}\label{est:q-q'1}
  \|\mathcal{T}^1_{\nu}g\|_{L^p_{rdr}}\le
    c_{\nu}\Big(\int_0^1 r^{\nu p} r dr\Big)^{1/p}\Big(\int_0^1 s^{\nu p'} sds\Big)^{1/p'}\|g\|_{L^{p}_{sds}}\le
    c_{\nu}\|g\|_{L^{p}_{sds}}.
  \end{equation}

\underline{\bf Terms $\mathcal{T}^2_{\nu}$ and $\mathcal{T}^3_\nu$}.
Since the terms $\mathcal{T}^2_{\nu}$ and $\mathcal{T}^3_{\nu}$ are similar to $T^2_\nu$ and $T^3_\nu$ in Lemma \ref{lem:Tnuqq'} with $t=0$, we only need the same method for $|t|\leq 1$ in the proof of Lemma \ref{lem:Tnuqq'}. Since $\mathcal{T}^2_{\nu}$ is very similar to $\mathcal{T}^3_\nu$,
we only estimate $\mathcal{T}^{3}_{\nu}$. For the term $\mathcal{T}^3_\nu$,
we need to estimate the two integrals
  \begin{equation*}
    I_{\pm}=\int_{0}^{\infty}
    \rho \phi(\rho)J_{\nu}(r \rho)(s\rho)^{-1/2}
    e^{\pm i s\rho}a_{\pm}(s\rho)d \rho.
  \end{equation*}
Using \eqref{eq:bess2}
and integrating by parts twice we obtain
  \begin{equation*}
   | I_{\pm}|=\Big|\frac{is^{-1/2}}{\pm s^2}
    \int_{0}^{\infty}
    (\rho^{1/2}\phi(\rho)J_{\nu}(r\rho)a_{\pm}(s\rho))'
    e^{\pm i \rho s}d\rho\Big|\leq
    Cr^{\nu}s^{-\frac52} .
  \end{equation*}
Then as long as $1<p<-\frac2\nu=4$,  we have
 \begin{equation}
  \|\mathcal{T}^3_{\nu}g\|_{L^p_{rdr}}\le
    c_{\nu}\Big(\int_0^1 r^{\nu p} r dr\Big)^{1/p}\Big(\int_{\frac12}^{+\infty} s^{-\frac{5p'}2} s ds\Big)^{1/p'}\|g\|_{L^{p}_{sds}}\le
    c_{\nu}\|g\|_{L^{p}_{sds}}.
 \end{equation}
Similarly, the term $\mathcal{T}^2_{\nu}$
is bounded on $L^p_{rdr}$ provided
$1<p'<-\frac2\nu$.

\underline{\bf Term $\mathcal{T}^4_{\nu}$}.
We examine the two integrals
\begin{equation*}
  I_{\pm}(r,s)=\int_{0}^{\infty}
  \rho \phi(\rho)(r \rho)^{-1/2}(s\rho)^{-1/2}
  e^{ i(r \pm s)\rho}a_{\pm}(r \rho) a_{\pm}(s\rho)d \rho.
\end{equation*}
Integrating by parts, for $r,s\geq 1/2$ and any $N\geq 0$, we obtain
  \begin{equation*}
   |  I_{\pm}(r,s)|\leq C_N (rs)^{-\frac12} (1+|r-s|)^{-N}.
  \end{equation*}
In order to prove $\mathcal{T}^4_{\nu}$ is bounded on $L^p_{rdr}$ via the Schur test, we must prove that
\begin{equation*}
 \sup_{s\in [\frac12,+\infty)}
 \int_{\frac12}^\infty
 |  I_{\pm}(r,s)| rdr\leq C,\quad  \sup_{r \in [\frac12,+\infty)} \int_{\frac12}^\infty |  I_{\pm}(r,s)| sds\leq C.
\end{equation*}
Obviously, it is sufficient to deal with the first estimate.
In view of $r^{1/2}\lesssim |r-s|^{1/2}+s^{1/2}$,
we can write
\begin{equation*}
  |I_{\pm}(r,s)|r \le
  Cs^{-1/2}(1+|r-s|)^{-N+1/2}+
  C(1+|r-s|)^{-N}
\end{equation*}
and the fact that $s\ge1/2$ implies
\begin{equation*}
  \int_{\frac12}^\infty
  |  I_{\pm}(r-s)| rdr\le
  C\int_{\mathbb{R}}(1+|r-s|)^{-N'}dr \le C.
\end{equation*}
We conclude that $\mathcal{T}^4_{\nu}$ is bounded on $L^p_{rdr}$ for $1<p<\infty$.

This concludes the proof for $\frac43<p<4$.
\end{proof}

Combining the dispersive estimate \eqref{est:P0q}, Keel--Tao's theorem and the Square function inequality, we obtain the Strichartz estimates with $2\leq q<4$ and $\frac2p+\frac1q\leq\frac12$. To extend the range $\frac2p+\frac1q\leq\frac12$ to $\frac1p+\frac1q<\frac12$, by interpolation, it suffices to prove that there exists a point $(p_0,q_0)$ arbitrarily close to the point $F$ of Figure one such that
\begin{equation}\label{est:44}
  \| e^{it\mathcal{D}_{\sigma,\gamma}}P_{0}f\|
  _{[L^{p_0}_t(\R; L^{q_0}_x(X))]^2}
  \leq C \|P_{0} f\|_{\dot{\mathbf{H}}^{s}(X)},
  \quad s=1-\frac1{p_0}-\frac2{q_0}.
\end{equation}
If this could be done, the proof of Theorem \ref{th-stri3} is concluded.
We recall
\begin{align*}
 P_0f=\begin{pmatrix}\phi_0(r)\\ \psi_0(r)\end{pmatrix}=c\begin{pmatrix}
    \cos \gamma \cdot K_{\frac12}(r) \\
    \sin \gamma \cdot K_{\frac12}(r)
  \end{pmatrix}
  +\begin{pmatrix}\tilde{\phi}_0(r)\\ \tilde{\psi}_0(r)\end{pmatrix}.
 \end{align*}
In view of propagator kernel \eqref{equ:FE}, then we deal with the two cases:
\begin{itemize}
\item if $\sin\gamma=0$, then
 \begin{align*}
 e^{it\D_{\sigma,\gamma}}P_0f=\int^\infty_0\int^\pi_{-\pi}\begin{pmatrix} F_{\frac12}&0\\0&E_{\frac12}\end{pmatrix}\begin{pmatrix}\phi_0(r)\\ \psi_0(r)\end{pmatrix}d\omega sds;
 \end{align*}
\item if $\cos\gamma=0$, then
 \begin{align*}
 e^{it\D_{\sigma,\gamma}}P_0f=\int^\infty_0\int^\pi_{-\pi}\begin{pmatrix} E_{\frac12}&0\\0&F_{\frac12}\end{pmatrix}\begin{pmatrix}\phi_0(r)\\ \psi_0(r)\end{pmatrix}d\omega sds,
 \end{align*}
\end{itemize}
where
  \begin{equation}
 \begin{aligned}
 &F_{\frac12}(t;r,s) =\frac1{2\pi\sigma}\int_0^\infty e^{-it\rho}J_{-\frac12}(r\rho)J_{-\frac12}(s\rho) \,\rho d\rho,\\
 &E_{\frac12}(t;r,s) =\frac1{2\pi\sigma}\int_0^\infty e^{-it\rho}J_{\frac12}(r\rho)J_{\frac12}(s\rho) \,\rho d\rho.
 \end{aligned}
 \end{equation}

Now define the operator as (see \eqref{Tnu-operator})
 \begin{align}\label{op:Tnu}
 (T_\nu g)(t,r)=\int^\infty_0{ K}_\nu(t,r,s)g(s)sds
 \end{align}
where ${ K}_\nu(t,r,s)$ is given by
 \begin{align}\label{Knu}
 { K}_{\nu}(t,r,s)=\frac1{2\pi\sigma}\int^\infty_0e^{-it\rho}J_{\nu}(r\rho)J_{\nu}(s\rho)\rho d\rho,\quad \nu=\pm\frac12.
 \end{align}
To prove the goal estimate, it suffices to prove
 \begin{align}\label{eq:wless}
\|(T_\nu g) (t,r)\|_{L^{p_0}(\R;L^{q_0}_{rdr})}\leq C\|g\|_{\dot{H}^s}, \quad\nu=\pm1/2.
 \end{align}
 Assume $2\leq q_0<4$, by using the square function estimate \eqref{square1-0} and the scaling argument, 
 we are reduce to prove 
  \begin{align}\label{eq:wless'}
\|(T_\nu g) (t,r)\|_{L^{p_0}_t(\R;L^{q_0}_{rdr})}\leq C\|g\|_{L^2_{sds}}, \quad \text{supp}\mathcal{H}_{\nu}g\subset [1,2].
 \end{align}
As before, we only consider the more difficult case $\nu=-1/2$. Let $h=\mathcal{H}_{-1/2}g$, now we write 
 \begin{equation*}
 Z=\int^\infty_0e^{-it\rho}J_{-\frac12}(r\rho)h(\rho)\rho d\rho.
 \end{equation*}
Due to the compact support of $h$, we rewrite
 \begin{equation*}
 \begin{split}
 Z&=\mathcal{F}_{\rho\rightarrow t}\left( r\rho)^{\frac12}J_{-\frac12}(r\rho)\cdot \rho^{\frac12}r^{-\frac12}h(\rho) \right)\\
 &=\mathcal{F}_{\rho\rightarrow t}\left((r\rho)^{\frac12}J_{-\frac12}(r\rho) \right)\ast \mathcal{F}_{\rho\rightarrow t}\left(\rho^{\frac12}h(\rho) \right)\cdot r^{-\frac12}.
 \end{split}
 \end{equation*}
Observe that $$(r\rho)^{\frac12}J_{-\frac12}(r\rho)=c\cos(r\rho)=\frac{c}2(e^{ir\rho}+e^{-ir\rho}),$$ we have 
 \begin{equation*}
 \mathcal{F}_{\rho\rightarrow t}\left((r\rho)^{\frac12}J_{-\frac12}(r\rho) \right)\cong \delta_{0}(t+r)+\delta_{0}(t-r),
 \end{equation*}
thus we obtain
 \begin{equation}\label{equ:Z}
 Z=|x|^{-\frac12}\left(\hat{\tilde{h}}(t+|x|)+\hat{\tilde{h}}(t-|x|)  \right), \quad \tilde{h}=\rho^{\frac12}h(\rho).
 \end{equation}
 where $r=|x|$ and $\hat{\tilde{h}}$ means the Fourier transform of $\tilde{h}$.
Let $\chi(r)\in C^\infty_c(\R)$ with supp$\chi\subset [0,1]$. We split $Z$ as follows
 \begin{equation*}
 Z=Z_1+Z_2,\quad Z_1=Z\chi(|x|).
 \end{equation*}
 Then for any $0<\epsilon\leq \eta\ll 1$, we obtain
  \begin{equation}\label{eq:Z1}
  \begin{split}
 \|Z_1\|_{L^{4+\eta}_t L^{4-\epsilon}_{rdr}}&\leq\|Z_1\|_{ L^{4-\epsilon}_{rdr} L^{4+\eta}_t}\\
 &\leq \|r^{-\frac12}\|_{L^{4-\epsilon}_{rdr}(r\leq1)}\|\hat{\tilde{h}}\|_{L^{4+\eta}_t}\lesssim \|\tilde{h}\|_{L^{(4+\eta)'}_t} \lesssim\|h\|_{L^2_{\rho d\rho}},
 \end{split}
 \end{equation}
 where we use the Hausdorff-Young inequality and the compact support of $h$.
 Next we consider the term $Z_2$.
Since $\mathcal{H}_{\nu}$ is unitary on $L^2_{rdr}$, we know the following  trivial estimates
 \begin{equation}\label{eq:Zt}
\|Z_2\|_{L^\infty_t L^2_{rdr}}\lesssim \|Z\|_{L^\infty_t L^2_{rdr}}\leq C\|h\|_{L^2_{\rho d\rho}}.
 \end{equation}
 For any $0<\varepsilon\ll 1$ and $\eta\geq\varepsilon$, similarly as above, we obtain
  \begin{equation}\label{eq:Z2}
    \begin{split}
 \|Z_2\|_{L^{4+\eta}_t L^{4+\varepsilon}_{rdr}}&\leq\|Z_2\|_{ L^{4+\varepsilon}_{rdr} L^{4+\eta}_t}\\
 &\leq \|r^{-\frac12}\|_{L^{4+\varepsilon}_{rdr}(r\geq1)}\|\hat{\tilde{h}}\|_{L^{(4+\eta)'}_t} \lesssim\|h\|_{L^2_{\rho d\rho}}.
 \end{split}
 \end{equation}
By interpolating with \eqref{eq:Zt} and \eqref{eq:Z2}, we get
 \begin{equation}
 \|Z_2\|_{L^p_t L^{q}_{rdr}}\lesssim \|h\|_{L^2_{\rho d\rho}},\quad (1+\frac{\varepsilon}2)\frac1p+\frac1q\leq\frac12,
 \end{equation}
where $p\in[4+\varepsilon,\infty]$, $q\in [2,4+\varepsilon]$. Therefore, there exists one pair $(p_0,q_0)=(4+2\epsilon,4-\epsilon)$ with $0<\varepsilon\ll \epsilon\ll 1$ such that
$$(1+\frac{\varepsilon}2)\frac1{p_0}+\frac1{q_0}\leq\frac12,$$  hence 
 \begin{equation}
 \|Z_2\|_{L^{p_0}_t L^{q_0}_{rdr}}\lesssim \|h\|_{L^2_{\rho d\rho}}.
 \end{equation}
Taking $\epsilon$ small enough, then we see that $(p_0,q_0)$ is a one point arbitrarily close to $(4,4)$. This together with \eqref{eq:Z1} and the fact that $\mathcal{H}_{\nu}$ is unitary on $L^2_{\rho d\rho}$, we prove 
\eqref{eq:wless'}. Therefore, we finish the proof of \eqref{est:44}.\vspace{0.2cm}

{\bf Comments on Remark \ref{rem:q=4}. } To prove \eqref{stri-D3'}, we can use the same argument as above. However, we don't split  \eqref{equ:Z} and directly estimate 
  \begin{equation}\label{eq:Z}
  \begin{split}
 \|Z\|_{L^{4}_t L^{4,\infty}_{rdr}}
 &\leq \||x|^{-\frac12}\|_{L^{4,\infty}_{x}}\|\hat{\tilde{h}}\|_{L^{4}_t}\lesssim \|\tilde{h}\|_{L^{\frac43}_t} \lesssim\|h\|_{L^2_{\rho d\rho}}.
 \end{split}
 \end{equation}
By interpolating  this with $ \|Z\|_{L^{\infty}_t L^{2}_{rdr}}\leq \|h\|_{L^2_{\rho d\rho}},$ we obtain 
  \begin{equation}\label{eq:Z}
  \begin{split}
 \|Z\|_{L^{p}_t L^{q,\tilde{q}}_{rdr}} \lesssim\|h\|_{L^2_{\rho d\rho}},
 \end{split}
 \end{equation}
 provided
 $$\frac1p+\frac1q\leq \frac12,\quad \frac1{\tilde{q}}=\frac2q-\frac12, \quad p\in[4, +\infty], \, q\in[2,4].$$
 However, $\tilde{q}>q$ when $q\in (2,4]$, the embedding inequality of Lorentz space shows that \eqref{stri-D3'} is weaker than \eqref{stri-D3}.

\subsection{Proof of  Theorem \ref{th-stri2}}
As a first step, we shall prove \eqref{stri-D2b} for $\theta=1$
\begin{equation}\label{stri-D2}
  \|W(|x|) e^{it\mathcal{D}_{\sigma,\gamma}}P_{0}f\|
  _{[L^p_t(\R; L^q_x(X))]^2}
  \leq C \|P_{0} f\|_{\mathbf{H}^{s}(X)},
  \quad s=1-\frac1p-\frac2q,
\end{equation}
for all $(p,q)$ satisfying \eqref{pqrange0}.
This estimate is a consequence of
the following two estimates by complex interpolation
\begin{equation}\label{est:2-2}
\|W(|x|)e^{it\mathcal{D}_{\sigma,\gamma}}P_{0}f\|
  _{[L^\infty_t(\R; L^2_x(X))]^2}\leq
   C \|P_{0} f\|_{[L^2]^2}
\end{equation}
and
\begin{equation}\label{est:inf-2}
\|W(|x|)e^{it\mathcal{D}_{\sigma,\gamma}}P_{0}f\|_{[L^p_t(\R; L^\infty_x(X))]^2}\leq
C
\|P_{0} f\|_{\mathbf{H}^{1-\frac1p}(X)}, \quad \forall\, p>2.
\end{equation}

Then it reduces to prove the following lemma:
\begin{lemma}\label{lemwe}
  Let $p>2$ and $0<\epsilon\ll1$ and $T_{\nu}$ be in \eqref{op:Tnu}.
  We have
\begin{equation}\label{est:2-2'}
\|\omega_{\nu}(r)(T_{\nu}g)(t,r)\|_{L^\infty(\R;L^2_{rdr})}
\leq C\|g\|_{L^2(X)},
\end{equation}
and
\begin{equation}\label{est:inf-2'}
\|\omega_{\nu}(r)(T_{\nu}g)(t,r)\|_{L^p(\R;L^\infty_{rdr})}
\leq C\|g\|_{H^{1-\frac1p}(X)},
\end{equation}
where
\begin{equation}
\omega_{\nu}(r)=
\begin{cases}
    1 &
    \ \text{if}\ \ \nu= 1/2,\\
    (1+r^{-\frac12-\epsilon})^{-1}&
    \ \text{if}\ \ \nu=-1/2.
\end{cases}
\end{equation}
\end{lemma}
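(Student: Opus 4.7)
The plan is to reduce both estimates to one-dimensional harmonic analysis by exploiting the explicit representation of $T_{\nu}g$ already derived in Section~\ref{subs-th3}. Setting $\tilde h(\rho):=\rho^{1/2}\mathcal{H}_{\nu}g(\rho)$ extended by zero for $\rho<0$, and $\phi(t):=\widehat{\tilde h}(t)$, the identities $J_{\pm 1/2}(z)=c_{\pm}\,z^{-1/2}\{\sin,\cos\}(z)$ inserted into \eqref{Knu} give
\begin{equation*}
T_{\nu}g(t,r)\;=\;c_{\nu}\,r^{-1/2}\bigl[\phi(t-r)\mp\phi(t+r)\bigr],
\end{equation*}
with the minus sign for $\nu=1/2$ and the plus sign for $\nu=-1/2$. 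Since $\mathcal{H}_{\nu}$ is unitary on $L^{2}(rdr)$ and the Fourier transform is an isometry, one has $\|\phi\|_{\dot H^{s}(\R)}=\|g\|_{\dot H^{s}(X)}$ for every $s\ge 0$, so the Sobolev norm on $g$ becomes a standard Sobolev norm on $\phi$.

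The estimate \eqref{est:2-2'} is then immediate: exploiting $r^{-1/2}\cdot r^{-1/2}\cdot r\,dr=dr$ and splitting the support of $\phi(t\mp r)$ into $s<t$ and $s>t$,
\begin{equation*}
\|T_{\nu}g(t,\cdot)\|_{L^{2}_{rdr}}^{2}\;\le\;2c_{\nu}^{2}\int_{0}^{\infty}\bigl(|\phi(t-r)|^{2}+|\phi(t+r)|^{2}\bigr)\,dr\;\le\;C\|\phi\|_{L^{2}(\R)}^{2}\;=\;C\|g\|_{L^{2}}^{2},
\end{equation*}
uniformly in $t$; the weight $\omega_{\nu}\le 1$ is harmless.

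For \eqref{est:inf-2'} I perform a standard 1D Littlewood-Paley decomposition $\phi=\sum_{j}\phi_{j}$ at frequency $|\tau|\sim 2^{j}$, and target the per-piece bound
\begin{equation*}
\bigl\|\omega_{\nu}(r)r^{-1/2}[\phi_{j}(t-r)\mp\phi_{j}(t+r)]\bigr\|_{L^{p}_{t}L^{\infty}_{r}}\;\lesssim\;2^{j(1-1/p)}\|\phi_{j}\|_{L^{2}},
\end{equation*}
whose assembly in $j$ via the usual square function inequality combined with Minkowski (available since $p\ge 2$) reconstructs $\|\phi\|_{\dot H^{1-1/p}}\sim\|g\|_{\dot H^{1-1/p}}$. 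For each $j$ I split the $r$-regimes $r\lesssim 2^{-j}$ and $r\gtrsim 2^{-j}$. On the short-range side with $\nu=1/2$, the exact cancellation gives $|\phi_{j}(t+r)-\phi_{j}(t-r)|\le 2rM\phi_{j}'(t)$, so the supremum is $\lesssim 2^{-j/2}M\phi_{j}'(t)$, with $L^{p}_{t}$-norm $\lesssim 2^{-j/2}\|\phi_{j}'\|_{L^{p}}\lesssim 2^{-j/2}\cdot 2^{j}\cdot 2^{j(1/2-1/p)}\|\phi_{j}\|_{L^{2}}=2^{j(1-1/p)}\|\phi_{j}\|_{L^{2}}$ by Bernstein and the Hardy-Littlewood $L^{p}$ bound. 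When $\nu=-1/2$ the factor $\omega_{-1/2}(r)r^{-1/2}\lesssim\min(r^{\epsilon},r^{-1/2})$ defuses the $r=0$ singularity, and the short-range piece is controlled by $\sup_{|h|\le 1}|\phi_{j}(t+h)|\le M\phi_{j}(t)$ together with Bernstein.

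The hard part will be the long-range regime $r\gtrsim 2^{-j}$, where both signs behave identically and a naive Minkowski swap to $L^{\infty}_{r}L^{p}_{t}$ produces the wrong direction of inequality for an upper bound on $L^{p}_{t}L^{\infty}_{r}$. The plan is to dyadically partition $r\in[2^{k},2^{k+1}]$ for integers $k\ge -j$ and exploit band-limitedness via a Nyquist-type sampling estimate
\begin{equation*}
\sup_{u\in[2^{k},2^{k+1}]}|\phi_{j}(t+u)|^{p}\;\lesssim\;\sum_{m\,:\,m2^{-j}\in[2^{k},2^{k+1}]}|\phi_{j}(t+m\,2^{-j})|^{p},
\end{equation*}
which integrates in $t$ to at most $2^{k+j}\|\phi_{j}\|_{L^{p}}^{p}$ by translation invariance. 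Each shell thus contributes $2^{-k/2}(2^{k+j})^{1/p}\|\phi_{j}\|_{L^{p}}$ to $\|\sup_{r\in[2^{k},2^{k+1}]}r^{-1/2}|\phi_{j}(t\pm r)|\|_{L^{p}_{t}}$, and because $p>2$ the geometric series $\sum_{k\ge -j}2^{k(1/p-1/2)}$ converges to $\lesssim 2^{j(1/2-1/p)}$; combined with Bernstein $\|\phi_{j}\|_{L^{p}}\lesssim 2^{j(1/2-1/p)}\|\phi_{j}\|_{L^{2}}$ this again produces $2^{j(1-1/p)}\|\phi_{j}\|_{L^{2}}$ and closes the argument. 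It is precisely the threshold $p=2$ at which this geometric sum fails to converge, explaining the restriction $p>2$ in the statement.
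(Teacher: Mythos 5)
Your proof of \eqref{est:2-2'} is fine and matches the paper in spirit: both reduce to unitarity of $\mathcal{H}_{\nu}$ on $L^2(rdr)$, you just make the sine/cosine form of $\mathcal{H}_{\pm 1/2}$ explicit.

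For \eqref{est:inf-2'} the per-piece computation is essentially sound. Your intermediate Nyquist statement
$\sup_{u\in I}|\phi_j(t+u)|^p\lesssim\sum_{m:m2^{-j}\in I}|\phi_j(t+m2^{-j})|^p$
is slightly overstated (for a band-limited function the local sup is dominated by a rapidly decaying \emph{weighted} sum over all samples, not by the samples in $I$ alone), but the integrated consequence $\int_{\R}\sup_{u\in I}|\phi_j(t+u)|^p\,dt\lesssim 2^{j+k}\|\phi_j\|_{L^p}^p$ is correct, and both the long-range geometric sum (where $p>2$ enters through $1/p-1/2<0$) and the short-range cancellation/weight argument give the per-frequency bound $2^{j(1-1/p)}\|\phi_j\|_{L^2}$.

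The genuine gap is the assembly over $j$. You invoke ``the usual square function inequality'' to pass from $\|\sum_j T^{(j)}_{\nu}g\|_{L^p_t L^\infty_r}$ to a square function, then Minkowski. But the Littlewood--Paley square function inequality fails when the inner norm is $L^\infty$; indeed the paper states this explicitly (``Since Littlewood--Paley square function inequality fails at $L^\infty_{dr}$ \ldots we use the triangle inequality''). Using the triangle inequality instead, $\sum_j\|T^{(j)}_{\nu}g\|_{L^p_tL^\infty_r}$ is an $\ell^1_j$ quantity which cannot be controlled by the $\ell^2_j$ quantity $\|g\|_{H^{1-1/p}}$. The paper resolves this by a \emph{double} dyadic decomposition, in frequency $N$ and in space $R\in 2^{\mathbb Z}$: after a crude $\ell^\infty_R\hookrightarrow\ell^2_R$ step they bound $\|\omega T_\nu g\|_{L^p_t L^\infty_r}^2$ by $\sum_R(\sum_N N^{1-1/p}\omega_\nu(R)Q(NR)\|\varphi_N\hank g\|_{L^2})^2$, where the localized estimate (Proposition~\ref{LRE}) supplies the off-diagonal factor $Q(NR)$ that decays both as $NR\to 0$ and as $NR\to\infty$; a Schur test in $(N,R)$ then closes the $\ell^2$ sum. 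Your per-shell bound $2^{-k/2}(2^{j+k})^{1/p}\|\phi_j\|_{L^p}$ is in fact exactly this $Q(NR)$ at $NR\gg1$ (with $N=2^j$, $R=2^k$), and your short-range computation corresponds to $NR\lesssim 1$; so the information is all there, but after the Bernstein step you collapse it to a single per-$j$ number and the $R$-dependence (and hence the Schur-summable structure) is lost. To fix the argument you would need to keep the $(j,k)$-indexed bounds separate, take $\ell^2$ in $k$ first as the paper does, and then run the Schur test over the two-parameter family.
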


\begin{proof}
By the properties of the Hankel transform, we rewrite the operator $T_\nu$
 \begin{equation}
   T_{\nu}g=\mathcal{H}_{\nu}e^{it\rho} \mathcal{H}_{\nu}g.
\end{equation}
Since $|\omega_{\nu}(r)|\leq 1$ and $\mathcal{H}_{\nu}$
is unitary on $L^{2}(rdr)$,
the proof of \eqref{est:2-2'} is trivial.

Denote $R,N\in 2^{\mathbb{Z}}$ are dyadic numbers. Inspired by \cite{CYZ}, we use a dyadic decomposition to write
\begin{eqnarray*}
\|\omega_{\nu}(r)T_{\nu}g(t,r)\|_{L^p_tL^\infty_{dr}}^2&\leq& \left\|\omega_{\nu}(r)\sum_{N\in 2^\Z}\hank\left[e^{it{\rho}} \varphi(\frac{\rho}N)\hank g(\rho) \right](r)\right\|_{L^p_tL^\infty_{dr}(\R^+)\,}^2
\\
&\leq&
\left\|\sup_{R\in2^{\Z}} \left\|\omega_{\nu}(r)\sum_{N\in 2^\Z} \hank\left[e^{it{\rho}} \varphi(\frac{\rho}N)\hank g(\rho) \right]\right\|_{L^\infty_{dr}([R,2R])}\right\|_{L^p_t}^2.
\end{eqnarray*}
Here we replace the measure $rdr$ in \eqref{est:inf-2'} by $dr$ to more intuitively focus on the $L^\infty$-norm.
We obtain by the fact $\ell^2 \hookrightarrow\ell^\infty$ and the Minkowski inequality
\begin{eqnarray*}
\|\omega_{\nu}(r)T_{\nu}g(t,r)\|_{L^p_tL^\infty_{dr}}^2
&\leq&
\left\| \left(\sum_{R\in2^{\Z}}\left\|\omega_{\nu}(r)\sum_{N\in 2^\Z} \hank\left[e^{it{\rho}} \varphi(\frac{\rho}N)\hank g(\rho) \right](r)\right\|^2_{L^\infty_{dr}\,([R,2R])}\right)^{1/2}\right\|_{L^p_t}^2
\\
&\leq&
 \sum_{R\in2^{\Z}}\left\|\omega_{\nu}(r)\sum_{N\in 2^\Z} \hank\left[e^{it{\rho}} \varphi(\frac{\rho}N)\hank g(\rho) \right]\right\|^2_{L^p_tL^\infty_{dr}\,([R,2R])},
\end{eqnarray*}
Since Littlewood--Paley square function inequality fails at $L^\infty_{dr}$ (see Proposition \ref{prop:LP}), thus we use the triangle inequality and get
\begin{equation}
\leq
 \sum_{R\in2^{\Z}}\left(\omega_{\nu}(R)\sum_{N\in 2^\Z}\left\|    \hank\left[e^{it{\rho}} \varphi(\frac{\rho}N)\hank g(\rho) \right]\right\|_{L^p_tL^\infty_{dr}\,([R,2R])}\right)^2.
\end{equation}

Now we recall the following result, see \cite[Proposition 6.2]{CDYZh}.
\begin{proposition}\label{LRE}
Let $p\geq2$, $\varphi\in\mathcal{C}_c^\infty(\R)$ be supported in $I:=[1,2]$, $R>0$ be a dyadic number,  and $\nu\in[-\frac12,\frac12]$. Then, for any $0<\varepsilon\ll 1$ the following estimate holds
\begin{equation}\label{stri-L}
\bigl\|\mathcal{H}_{\nu}\big[e^{ it\rho} h(\rho)\big](r)
\bigr\|_{L^p_tL^\infty_{dr}([R/2,R])}
\lesssim \|h\|_{L^2_{\rho d\rho}(I)}\times
\begin{cases}
R^{\nu-\frac{\varepsilon} 2}&\ \text{if}\  R\lesssim1\\
R^{\frac1p-\frac12}&\ \text{if}\  R\gg1
\end{cases}
\end{equation}
provided $h(\rho)$ is supported in $[1,2]$.
\end{proposition}
Therefore, using a scaling argument and applying $h(\rho)$ with $\varphi(\rho)\hank g(N\rho)$ we have
 \begin{equation}\label{ieq:Hw}
 \begin{aligned}
 &\leq
 \sum_{R\in2^{\Z}}\biggl(\omega_{\nu}(R)\sum_{N\in 2^\Z}N^{2-\frac1p}\left\| \hank\left[e^{it{\rho}} \varphi(\rho)\hank g(N\rho) \right]\right\|_{L^p_tL^\infty_{dr}\,([NR,2NR])}\biggr)^2\\
 &\leq
  \sum_{R\in2^{\Z}}
  \biggl(\sum_{N\in 2^\Z}N^{1-\frac1p} \omega_{\nu}(R)Q(NR)
  \left\| \varphi(\frac\rho{N})\hank g(\rho) \right\|
  _{L^2_{\rho d\rho}}
  \biggr)^2
 \end{aligned}
 \end{equation}
where
\begin{equation}
Q(NR)=
\begin{cases}
  (NR)^{\nu-\frac{\varepsilon} 2}& \ \text{if}\  NR\lesssim 1,\\
  (NR)^{\frac1p-\frac12}& \ \text{if}\ NR\gg1.
\end{cases}
\end{equation}
Due to the fact $p>2$, we have
\begin{equation}\label{q-cond}
\frac1p-\frac12<0.
\end{equation}
We consider two cases of $\nu$ in detail.
\bigskip

$\bullet$ If $\nu= \frac12$, taking $0<\varepsilon<\nu$ we have
\begin{equation}\label{ST}
\mathcal{Q}_1:=\sup_{R} \sum_{N\in2^\Z} Q(NR) <\infty,\quad \mathcal{Q}_2:=\sup_{N} \sum_{R\in2^\Z} Q(NR) <\infty.
\end{equation}
Denote
\begin{equation}
A_{N,\nu}=N^{1 -\frac 1p}
\|\varphi(\frac\rho{N})\hank g(\rho)\|_{L^2_{\rho d\rho}},
\end{equation}
applying the Schur test argument with \eqref{ST} we obtain
\begin{equation*}
  \Bigl\|\sum_{N\in 2^{\Z}}\omega_\nu(R) Q(NR)A_{N,\nu}\Bigr\|_{\ell^{2}_{R}}
  \le
  (\mathcal{Q}_{1}\mathcal{Q}_{2})^{1/2}
  \|A_{N,\nu}\|_{\ell^{2}_{N}}.
\end{equation*}
Thus we have proved
\begin{equation*}
  \|\omega_{\nu}(r)(T_{\nu}g)(t,r)\|_{L^p(\R;L^\infty_{dr})}^{2}
  \le
  C_{\nu}\sum_{N\in2^\Z}|A_{N,\nu}|^2
  =C_{\nu}\|g\|_{\dot H^{1-\frac1p}}^2.
\end{equation*}

\medskip

$\bullet$
If $\nu=-\frac12$ we make the following adjustments to the above argument.
On the one hand, if $NR \lesssim 1$ one has
\begin{equation*}
  \omega_{\nu}(R)Q(NR)=
  \frac{(NR)^{\nu-\frac \epsilon2}}{1+R^{\nu-\epsilon}}=
  \frac{(NR)^{\nu-\epsilon}}
    {N^{\nu-\epsilon}+(NR)^{\nu-\epsilon}}
  (NR)^{\frac \epsilon2}N^{\nu-\epsilon}
  \le (NR)^{\frac \epsilon2}N^{\nu-\epsilon}.
\end{equation*}

On the other hand, for $NR\ge 1$ we have
\begin{equation*}
  \omega_{\nu}(R)Q(NR)=
  \frac{(NR)^{\frac 1p-\frac 12}}{1+R^{\nu-\epsilon}}=
  \frac{N^{\nu-\epsilon}}{N^{\nu-\epsilon}+(NR)^{\nu-\epsilon}}
  (NR)^{\frac 1p-\frac 12}\le
  (NR)^{\frac 1p-\frac 12}N^{\nu-\epsilon}.
\end{equation*}
Then, taking $\epsilon>0$
is sufficiently small so that $1-\frac1p+(\nu-\epsilon)\geq 0$, we obtain the following bounds
\begin{equation*}
  N^{1-\frac1p}\omega_{\nu}(R)Q(NR)\leq
  N^{1-\frac 1p}N^{\nu-\epsilon}\widetilde{Q}(NR)\leq
 \bra{N}^{1-\frac 1p}\widetilde{Q}(NR)
\end{equation*}
where $\bra{N}=(1+N^{2})^{1/2}$ and
\begin{equation}
\tilde{Q}(NR)=
\begin{cases}
  (NR)^{\frac{\epsilon}2}& \ \text{if}\ NR\lesssim 1 \\
  (NR)^{\frac1p-\frac12}&\ \text{if}\  NR\gg1,
\end{cases}
\end{equation}
which implies
\begin{equation*}
  \|\omega_{\nu}(r)T_{\nu}g(t,r)\|_{L^p_tL^\infty_{dr}}^2
  \leq
  \sum_{R\in2^{\mathbb{Z}}}
  \biggl(\sum_{N\in 2^\mathbb{Z}}
  \bra{N}^{1-\frac1p}\tilde{Q}(NR)
    \left\| \varphi(\frac\rho{N})\hank g(\rho) \right\|
    _{L^2_{\rho d\rho}}\biggr)^{2}.
\end{equation*}
We use the previous argument again with replace
$Q(NR)$ by $\tilde{Q}(NR)$ and $A_{N,\nu}$ by $\tilde{A}_{N,\nu}$ which defined by
\begin{equation*}
  \tilde{A}_{N,\nu}=
  \bra{N}^{1-\frac1p}
    \left\| \varphi(\rho/N)\hank f(\rho) \right\|
    _{L^2_{\rho d\rho}}
\end{equation*}
then we get
\begin{equation*}
  \|\omega_{\nu}(r)T_{\nu}f(t,r)\|_{L^p_tL^\infty_{dr}}^2
  \le C_{\nu}
  \sum_{N\in 2^{\mathbb{Z}}}
  |\tilde A_{N,\nu}|^{2}=
  C_{\nu}\|f\|_{ H^{1-\frac1p}}^2.
\end{equation*}
Then we proved the estimate \eqref{stri-D2}. Choosing appropriate $q$ in \eqref{stri-D2} and \eqref{stri-D3} (which will be proved in the Subsection \ref{subs-th3}) respectively we will conclude the proof by using the complex interpolation.

\end{proof}

\section*{Acknowledgements}\label{sec:ackn}

Piero D'Ancona and Zhiqing Yin
are partially supported by the
MUR - PRIN project 2020XB3EFL and
by the INdAM - GNAMPA Project CUP E53C23001670001.
Junyong Zhang is partially supported by  National Natural Science Foundation of China (12171031) and Beijing Natural Science Foundation (1242011).
J.Z. is grateful for the hospitality of the Australian National University when he was visiting Andrew Hassell at ANU.\vspace{0.2cm}

{\bf Conflicts of Interest Statement:}
The authors declare that there are no conflicts of interest relevant to the content of this manuscript. The research was conducted without any commercial or financial relationships that could be construed as a potential conflict of interest.

{\bf Data Availability Statement:}
The data supporting the findings of this study are available from the corresponding author upon reasonable request.

\end{document}